\documentclass[12pt,letter,reqno]{amsart} %us uses letter paper

%\usepackage{mathptmx} % Times New Roman
%\usepackage{times}

%\usepackage[
	%top=1in, bottom=1in,
 	%left=1.5in,right=1.5in
%]{geometry}

\usepackage[text={420pt,660pt},centering]{geometry}
\usepackage{amssymb,amsfonts,amsthm,mathrsfs}
\usepackage{marginnote}
\usepackage{comment}
\usepackage{enumitem}
\frenchspacing
\usepackage{graphicx}
%\usepackage{MnSymbol}
%\linespread{2.5}
%\usepackage{refcheck}
\usepackage{bm}

\usepackage[dvipsnames]{color}

\usepackage[colorlinks=true, pdfstartview=FitV, linkcolor=black, citecolor=blue, urlcolor=blue]{hyperref}

%%%%BRUTE FORCE DEFINITION OF AVERAGE INTEGRALS%%%%%%%%%%%%%%%%%%%%%%%%%%%%%%%% From Pascal
\def\Xint#1{\mathchoice
{\XXint\displaystyle\textstyle{#1}}%
{\XXint\textstyle\scriptstyle{#1}}%
{\XXint\scriptstyle\scriptscriptstyle{#1}}%
{\XXint\scriptscriptstyle%
\scriptscriptstyle{#1}}%
\!\int}
\def\XXint#1#2#3{{\setbox0=\hbox{$#1{#2#3}{%
\int}$ }
\vcenter{\hbox{$#2#3$ }}\kern-.6\wd0}}
\def\barint{\, \Xint -} % \, corrects the \! used in the definition
\def\bariint{\barint_{} \kern-.4em \barint}
\def\bariiint{\bariint_{} \kern-.4em \barint}
\renewcommand{\iint}{\int_{}\kern-.34em \int} %\ minor space between the integrals
\renewcommand{\iiint}{\iint_{}\kern-.34em \int} %\ minor space between the integrals

\DeclareMathAlphabet{\mathcal}{OMS}{cmsy}{m}{n}

\theoremstyle{plain}

\newtheorem{theorem}{Theorem}[section]

\newtheorem{definition}[theorem]{Definition}
\newtheorem{lemma}[theorem]{Lemma}

\newtheorem{corollary}[theorem]{Corollary}
\newtheorem{proposition}[theorem]{Proposition}

\theoremstyle{definition}
\newtheorem{remark}[theorem]{Remark}

% Blackboard bold
\newcommand{\R}{\mathbb{R}}
\newcommand{\C}{\mathbb{C}}
\newcommand{\N}{\mathbb{N}}
\newcommand{\Z}{\mathbb{Z}}
\newcommand{\T}{\mathbb{T}}
\newcommand{\bP}{\mathbb{P}}

\newcommand{\supp}{\mathop{\mathrm{supp}}}

% Calligraphic

% Greek  

\newcommand{\be}{\beta}

% Various operators

\newcommand{\p}{\partial}
\newcommand{\la}{\langle}
\newcommand{\ra}{\rangle}
\newcommand{\les}{\lesssim}

\newcommand{\norm}[1]{\lVert #1 \rVert}
\renewcommand{\:}{\colon}

 %weak convergence

\let\div\relax
\DeclareMathOperator{\div}{div}

\DeclareMathOperator{\curl}{curl}

\let\tilde\relax
\newcommand{\tilde}[1]{\widetilde{#1}}

\newcommand{\I}{{\rm I}}
\newcommand{\II}{{\rm II}}

\newcommand{\eps}{\ensuremath{\varepsilon}}

% Editing

\newcommand{\da}[1]{{\color{green}{#1}}}

\renewcommand{\L}{\bm{L}}
\renewcommand{\T}{\bm{T}}
\newcommand{\M}{\bm{M}}
\newcommand{\D}{\bm{D}}
\newcommand{\A}{\bm{A}}
\renewcommand{\S}{\bm{S}}
\newcommand{\K}{\bm{K}}
\newcommand{\BS}{{\rm BS}}
\let\Re\relax
\DeclareMathOperator{\Re}{Re}

\let\Pr\relax
\DeclareMathOperator{\Pr}{Pr}

\setcounter{equation}{0}
\numberwithin{equation}{section}
\setlist[enumerate]{leftmargin=*}
%\numberwithin{figure}{section}
%\renewcommand*{\thefootnote}{\fnsymbol{footnote}}

\title[Non-uniqueness of Leray solutions]{Non-uniqueness of Leray solutions of the forced Navier-Stokes equations}
%three-dimensional Navier-Stokes equations with forcing
\author[Albritton]{Dallas Albritton} 
%\address{Courant Institute of Mathematical Sciences, New York University, New York, NY 10012}
\address[Dallas Albritton]{School of Mathematics, Institute for Advanced Study, 1 Einstein Dr., Princeton, NJ 08540, USA}
\email{dallas.albritton@ias.edu}

\author[Bru\'e]{Elia Bru\'e} 
\address[Elia Bru\'e]{School of Mathematics, Institute for Advanced Study, 1 Einstein Dr., Princeton, NJ 08540, USA}
\email{elia.brue@ias.edu}

\author[Colombo]{Maria Colombo}
\address[Maria Colombo]{Institute of Mathematics, EPFL SB, Station 8,  CH-1015 Lausanne, Switzerland }
\email{maria.colombo@epfl.ch}

%\subjclass[2010]{35Q35, 37N10, 76B99}
\begin{document}
\begin{abstract}

In the seminal work~\cite{leray}, Leray demonstrated the existence of global weak solutions to the Navier-Stokes equations in three dimensions.
We exhibit two distinct Leray solutions with zero initial velocity and identical body force.
%Our approach is based on the instability of a particular `background' solution of the forced Navier-Stokes equations in similarity variables.
Our %\mc{main construction ?is }\da
{approach is to construct} a `background' solution which is unstable for the Navier-Stokes dynamics in similarity variables; its similarity profile is a smooth, compactly supported vortex ring whose cross-section is a modification of the unstable two-dimensional vortex constructed by Vishik in~\cite{Vishik1,Vishik2}. The second solution is a trajectory on the unstable manifold associated to the background solution, in accordance with the predictions  of Jia and {\v S}ver{\'a}k in~\cite{jiasverakselfsim,jiasverakillposed}. Our solutions live precisely on the borderline of the known well-posedness theory.

\end{abstract}

\maketitle

\setcounter{tocdepth}{1}
\tableofcontents

\parskip   2pt plus 0.5pt minus 0.5pt

\section{Introduction}

%\dacomment{* Changed `The main difficulty' to `our approach' in the abstract. Also changed Leray-Hopf to Leray in the abstract}

In the seminal work~\cite{leray}, Leray demonstrated the existence of global weak solutions to the three-dimensional Navier-Stokes equations in the whole space:
\begin{equation}
\label{eq:ns}
\tag{NS}
%\left\lbrace
\begin{aligned}
    \p_t u + u \cdot \nabla u - \Delta u + \nabla p &= f \\
    \div u &= 0 \, .
\end{aligned}
%\right.
\end{equation}
{Leray's work, and in particular his compactness method, has become paradigmatic for the construction of global weak solutions to many nonlinear PDEs.}
His weak solutions, which he called \emph{solutions turbulent}, are now known as \emph{Leray--Hopf solutions}, recognizing also the contribution~\cite{hopf} of Hopf in bounded domains.

\begin{definition} Let $T>0$, $u_0 \in L^2(\R^3)$ be a divergence-free vector field, and $f \in L^1_t L^2_x(\R^3 \times (0,T))$.
A \emph{Leray-Hopf solution} on $\R^3 \times (0,T)$ with initial datum $u_0$ and force $f$ is a divergence-free vector field
\begin{equation}
    \label{eq:minimumregularity}
u \in L^\infty_t L^2_x \cap L^2_t \dot H^1_x(\R^3 \times (0,T))
\end{equation}
 which%\dacomment{I prefer $u$ to be centered -- it highlights the regularity class}
\begin{enumerate}

\item  belongs to $C_{\rm w}([0,T];L^2(\R^3))$ and attains the initial data: $u(\cdot,0) = u_0$;
\item solves the Navier-Stokes equations~\eqref{eq:ns} in the sense of distributions on $\R^3 \times (0,T)$  for some pressure $p \in L^1_{\rm loc}(\R^3 \times (0,T))$; and
% namely 
% $\varphi\in C^\infty_c (\mathbb R^3\times [0,T))$
% \begin{equation}
% \label{eqn:weak}
% \int \big(\partial_t \varphi  u + u u \cdot \nabla \phi +\Delta \varphi  \u \big)\, dx\, dt 
% = - \int u_0 (x) \cdot \varphi (0,x)\, dx \,,
% \end{equation}

\item satisfies the \emph{energy inequality} for all $t \in (0,T]$:
\begin{equation}
    \label{eq:energyinequality}
    \frac{1}{2} \int |u(x,t)|^2 \, dx + \int_0^t \int |\nabla u|^2 \, dx \, ds \leq \frac{1}{2} \int |u_0(x)|^2 \, dx + \int_0^t \int f \cdot u \, dx \,ds \, .
\end{equation}
 
\end{enumerate}
\end{definition}

%\dacomment{`By adapting Leray's ideas'}

Leray's construction~\cite{leray}  shows that for each divergence-free $u_0 \in L^2(\R^2)$ and $f \in L^1_t L^2_x + L^2_t \dot H^{-1}_x(\R^3 \times \R_+)$, there exists a \emph{global-in-time} Leray--Hopf solution to~\eqref{eq:ns} with initial data $u_0$ and body force $f$. Solutions further satisfy $u(\cdot,t) \to u_0$ as ${t \to 0^+}$ strongly in $L^2(\R^3)$. Their pressure is determined, up to a constant-in-space function of time $c(t)$, by solving 
\begin{equation}
 { - \Delta   p } = \div \div (u \otimes u) - \div f .
\end{equation}
%\dacomment{check where Leray's force was... $L^2H^1$? also, cite lemarie reference}
Furthermore, solutions built via Leray's construction are \emph{suitable} {(see  \cite[Proposition 30.1]{lemarie2002})}, namely, they satisfy the \emph{local energy inequality}
\begin{equation}\label{eqn:suitable}
    (\p_t - \Delta) \frac{1}{2} |u|^2 + |\nabla u|^2 + \div \left[ \left( \frac{1}{2} |u|^2 + p \right) u \right]\leq f \cdot u
\end{equation}
distributionally on $\R^3 \times \R_+$. Suitability was crucial in developing dimensional bounds on the potential singular set as done by Caffarelli, Kohn, and Nirenberg~\cite{ckn}. %\dacomment{* `are suitable' instead of `are suitable Leray--Hopf solutions'}

\emph{Do Leray--Hopf solutions, perhaps augmented with suitability, constitute an existence and uniqueness class for the Navier--Stokes equations?} Since Leray's work, this has remained an important open problem in the theory of the Navier-Stokes equations, as discussed in~\cite{hopf,ladyzhenskayanonuniqueness} and, in the past decade,~\cite{jiasverakillposed} and \cite[Problem  9]{convexintegrationconstructionsinturbulence}. 
In~1969, Ladyzhenskaya~\cite{ladyzhenskayanonuniqueness} already gave an example of non-uniqueness to~\eqref{eq:ns}, though in a time-varying domain which degenerates as $t \to 0^+$ and with non-standard boundary conditions and a force. In recent years, this problem has been revisited, and various works have influenced the community toward the expectation that uniqueness does not hold. %solutions to~\eqref{eq:ns}.
%that influenced the community's beliefs toward the expectation of non-uniqueness for Leray-Hopf solutions to~\eqref{eq:ns}.
We mention two of particular importance:
First, Jia, {\v S}ver{\'a}k, and Guillod~\cite{jiasverakselfsim,jiasverakillposed,guillodsverak} developed a program towards non-uniqueness and supported the missing steps with numerical evidence. Second, Buckmaster and Vicol~\cite{BuckmasterVicolAnnals} constructed non-unique distributional solutions of the Navier-Stokes equations with finite \emph{kinetic} energy via the powerful method of convex integration. Despite several attempts and results obtained alongside these ideas,  convex integration is at the present time far from reaching the minimum regularity~\eqref{eq:minimumregularity} required to speak of Leray-Hopf solutions.

In this paper, we answer the uniqueness question in the negative:

\begin{theorem}[Non-uniqueness]
    \label{thm:introthm}
There exist $T>0$, $f \in L^1_t L^2_x(\R^3_+ \times (0,T))$, and two distinct suitable Leray--Hopf solutions $u$, $\bar{u}$ to the Navier--Stokes equations on $\R^3 \times (0,T)$ with body force $f$  and initial condition $u_0 \equiv 0$.
\end{theorem}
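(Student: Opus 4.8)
The plan is to realize the non-uniqueness scenario of Jia--\v Sver\'ak by building an explicit unstable ``background'' solution in self-similar variables and then exhibiting a second solution on its unstable manifold. I would work in the similarity variables $\xi = x/\sqrt{t}$, $\tau = \log t$, where a velocity field $u$ corresponds to a profile $U(\xi,\tau)$ via $u(x,t) = t^{-1/2} U(x/\sqrt t, \log t)$ and the Navier--Stokes equations become the rescaled system $\p_\tau U - \tfrac12(1 + \xi\cdot\nb)U - \Delta U + U\cdot\nb U + \nb P = F$, with $F$ the similarly-rescaled force. The first step is to produce a steady profile $\bar U$ that (i) is a smooth, compactly supported, divergence-free vector field on $\R^3$, (ii) has the axisymmetric structure of a vortex ring whose cross-section is (a modification of) Vishik's unstable two-dimensional vortex, and (iii) has a corresponding force $\bar F$ that is as regular and decaying as we need. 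One takes the $2$D unstable vortex $\Omega_V$ of Vishik~\cite{Vishik1,Vishik2}, which produces an unstable eigenvalue for the linearized $2$D Euler (and then Navier--Stokes) operator, transplants it onto the cross-section of an axisymmetric swirl-free vortex ring, localizes it by a cutoff so that the profile is compactly supported, and \emph{defines} $\bar F$ to be whatever forcing makes $\bar U$ an exact steady solution of the rescaled equations; the point is that this $\bar F$ should translate, in the original variables, to a force in $L^1_t L^2_x$.

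The second step is the linearized analysis: one linearizes the rescaled Navier--Stokes operator around $\bar U$, obtaining $\p_\tau W = \mathcal L W := -\tfrac12(1 + \xi\cdot\nb)W + \Delta W - \bar U\cdot\nb W - W\cdot\nb \bar U - \nb Q$ (Leray-projected), and one needs to show that $\mathcal L$ has an eigenvalue $\lambda$ with $\Re\lambda > 0$, with a smooth, spatially-decaying eigenfunction $\rho$. This is where the Vishik construction does the work: the unstable mode of the $2$D problem, after the transplantation and after adding the (compact, lower-order relative to $\Delta$) similarity and vortex-stretching terms, should persist as an unstable eigenvalue of $\mathcal L$ by a perturbative/continuity argument, since in the relevant asymptotic regime (large vortex ring radius, or a suitable scaling parameter) the $3$D operator is a small perturbation of the $2$D one. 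One must also control the rest of the spectrum of $\mathcal L$ enough to set up an invariant-manifold theorem — i.e., establish suitable semigroup bounds $\|e^{\tau\mathcal L}\|$ on the complementary spectral subspace.

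The third step is the nonlinear construction: using the unstable eigenvalue $\lambda$ and eigenfunction $\rho$, build a trajectory $U(\tau) = \bar U + \Re(e^{\lambda\tau}\rho) + \text{(higher order)}$ of the nonlinear rescaled equation on $(-\infty, \tau_0]$ that converges to $\bar U$ as $\tau \to -\infty$ at rate $e^{\Re\lambda\,\tau}$; this is a standard fixed-point/Duhamel argument once the linear semigroup estimates are in hand, and the decay as $\tau\to-\infty$ corresponds to $t\to 0^+$, so that in original variables both $u$ (the perturbed trajectory) and $\bar u$ (the background) attain the \emph{same} initial datum — which, after subtracting a harmless self-similar shift or by arranging $\bar U$ to decay appropriately, is $u_0\equiv 0$. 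The fourth step is the bookkeeping that turns these into genuine \emph{suitable Leray--Hopf} solutions on $\R^3\times(0,T)$: one checks that the profiles have enough integrability that $u,\bar u\in L^\infty_t L^2_x\cap L^2_t\dot H^1_x$, that they lie in $C_w([0,T];L^2)$ with $u(0)=\bar u(0)=0$, that the shared force $f$ — obtained by un-rescaling $\bar F$ — is in $L^1_t L^2_x$, and that the energy and local-energy inequalities hold (for smooth-enough solutions these are equalities, so suitability is automatic). The fact that $u\neq\bar u$ is immediate from the $e^{\Re\lambda\tau}\rho$ asymptotics.

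I expect the main obstacle to be the \textbf{spectral/linearized step}: proving that Vishik's two-dimensional instability survives the transplantation to a three-dimensional axisymmetric vortex ring \emph{and} the addition of the similarity drift term $-\tfrac12(1+\xi\cdot\nb)$ and the vortex-stretching term $-W\cdot\nb\bar U$, i.e., producing a bona fide unstable eigenvalue (not merely unstable essential spectrum) of $\mathcal L$ with a decaying eigenfunction, together with the semigroup decay estimates on the stable part needed for the invariant-manifold argument. A secondary difficulty, more technical, is choosing the cutoff/localization of the profile so that $\bar U$ is compactly supported while the induced force $\bar F$ remains mild enough that its un-rescaling sits in $L^1_t L^2_x$ up to $t=0$ — this forces a careful choice of the similarity-variable decay and of how the $2$D vortex is embedded in the ring cross-section.
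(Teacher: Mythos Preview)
Your proposal is correct and follows essentially the same route as the paper: similarity variables, a compactly supported axisymmetric vortex-ring profile built from Vishik's unstable $2$D vortex with the force \emph{defined} to make it steady, a perturbative spectral argument to carry the instability from $2$D Euler to the $3$D self-similar Navier--Stokes operator, and a Duhamel/fixed-point construction of a single trajectory on the unstable manifold. One point of clarification on the spectral step: in the paper it is the Laplacian and the similarity drift that are treated as the small perturbation of the Euler transport operator (via a large amplitude parameter $\beta$ multiplying $\bar U$, so that $\beta^{-1}\Delta$ is small), not the other way around as your parenthetical ``lower-order relative to $\Delta$'' suggests --- the unstable eigenvalue lives in the Euler part, so the heat operator must be the one that is sent to zero.
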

 
The solutions exhibited in this theorem {are smooth for positive times} and satisfy many additional properties, which we describe in Theorem~\ref{thm:refined}.
 It is elementary to extend the distinct solutions to global-in-time Leray--Hopf solutions by modifying the force after $T$ and using Leray's result.  %Among these properties are smoothness on $\R^3 \times (0,\varepsilon)$ for $0 < \varepsilon \ll 1$.

\subsection{Strategy}

The Navier--Stokes equations possess a scaling symmetry
\begin{equation}
    \label{eq:nsscaling}
    u_\lambda(x,t) = \lambda u(\lambda x,\lambda^2 t), \quad p_\lambda(x,t) = \lambda^2 p(\lambda x, \lambda^2 t), \quad f_\lambda(x,t) = \lambda^3 f(\lambda x, \lambda^2 t),
\end{equation}
arising from dimensional analysis:
\begin{equation}
    [x] = L, \quad [t] = L^2, \quad [u] = L^{-1},\quad [p] = L^{-2}, \quad [f] = L^{-3},
\end{equation}
as discussed in~\cite{ckn}.\footnote{These particular dimensions arise from treating the viscosity $\nu$, which has dimensions $[\nu] = L^2/T$, as a dimensionless quantity.} The scaling symmetry is intimately connected with the well-posedness theory; the \emph{critical norms} (those invariant under the scaling symmetry) mark the borderline between the regime in which the non-linearity dominates (\emph{supercritical}) and the regime in which it can be treated perturbatively (\emph{subcritical}). On the borderline itself, the notion of `size' becomes important, as can be seen from the following heuristic: When $u_0 \sim a/|x|$, we have $\Delta u_0 \sim a/|x|^3$ and $u_0 \cdot \nabla u_0 \sim a^2/|x|^3 $. When $a \ll 1$, $\Delta u_0$ dominates, and we expect well-posedness; when $a \gg 1$, the non-linearity dominates, and we look for non-uniqueness.

A particular class of solutions which lives precisely on the borderline of the known well-posedness theory are the \emph{self-similar solutions}, that is, Navier--Stokes solutions on $\R^3 \times \R_+$ invariant under the scaling symmetry~\eqref{eq:nsscaling}. {These solutions also play an important role in the program of Jia, {\v S}ver{\'a}k, and Guillod~\cite{jiasverakselfsim,jiasverakillposed,guillodsverak}, which we describe further in Section~\ref{sec:comparisonwithexisting}.}
%\mc{I WOULD NOW TAKE OUT THIS PART. In~\cite{jiasverakillposed}, Jia and {\v S}ver{\'a}k proposed two non-uniqueness scenarios centered on these solutions. We mention also that Vishik~\cite{Vishik1,Vishik2} and Bressan, Murray, and Shen~\cite{BressanSelfSimilar,BressanAposteriori} attacked non-uniqueness in the Euler equations (sharpness of the Yudovich class) by exploiting self-similar solutions of the Euler equations. Our work, inspired by the approaches in~\cite{jiasverakillposed} and~\cite{Vishik1}, exploits self-similar solutions as well.}

We introduce the \emph{similarity variables}\footnote{We regard $\tau$ as $\log t/t_0$ with $t_0 = 1$; the argument of the logarithm is non-dimensionalized by the reference time $t_0$.}
\begin{equation}
\xi = \frac{x}{\sqrt{t}}, \quad \tau = \log t,
\end{equation}
\begin{equation}
    \label{eq:definitionofuandU}
    u(x,t) = \frac{1}{\sqrt{t}} U(\xi,\tau), \quad f(x,t) = \frac{1}{t^{\frac{3}{2}}} F(\xi,\tau).
\end{equation}
Notice that $\tau \in \R$ whereas $t \in (0,+\infty)$. Our convention is that lower case functions denote functions in physical variables, and upper case functions denote the corresponding functions in similarity variables. In these variables, the Navier--Stokes equations become
\begin{equation}
    \label{eq:similaritynavierstokes}
\begin{aligned}
    \p_\tau U - \frac{1}{2} \left( 1 + \xi \cdot \nabla_\xi \right) U  - \Delta U + U \cdot \nabla U + \nabla P &= F \\
    \div U &= 0 \, .
    \end{aligned}
\end{equation}
A self-similar solution $\bar{u}$ is precisely a steady state $\bar{U}$ of the renormalized flow~\eqref{eq:similaritynavierstokes}.

Suppose furthermore that $\bar{U}$ is \emph{linearly unstable} for the dynamics of~\eqref{eq:similaritynavierstokes}, namely, there exists an unstable eigenvalue for the linearized operator $\L_{\rm ss}$, defined by
\begin{equation}
\label{eq:linearizedsimilaritynavierstokes}
    -\L_{\rm ss} = - \frac{1}{2} \left( 1 + \xi \cdot \nabla_\xi \right) U  - \Delta U + \bP \left( \bar{U} \cdot \nabla U + U \cdot \nabla{\bar{U}} \right) \, ,
\end{equation}
where $\bP$ is the Leray projector. 
 We seek the \emph{unstable manifold} associated to the most unstable eigenvalues, whose real part is denoted by $a$. Solutions on this manifold satisfy the asymptotics
\begin{equation}
    \label{eq:satisfytheasymptotics}
    U = \bar{U} + U^{\rm lin} + O(e^{2 \tau a }),
\end{equation}
as $\tau \to -\infty$, %in backward time,
where $U^{\rm lin}$ is a non-trivial solution of the linearized equations $\p_\tau U^{\rm lin} = \L_{\rm ss} U^{\rm lin}$ on $\R^3 \times \R$ and corresponding to an unstable eigenfunction. Observing that $U^{\rm lin}$ decays at the rate $e^{\tau a}$, the solution $\bar{U}$ may be regarded as `non-unique at $\tau = -\infty$,' which corresponds to non-uniqueness at $t = 0$ in physical variables.

%namely,
%\begin{equation}
    %\label{eq:linearizedsimilaritynavierstokes}
%\begin{aligned}
    %\p_\tau U  + \nabla P &= 0 \\
    %\div U &= 0 \, .
    %\end{aligned}
%\end{equation}

%The term $\p_\tau U - (1 + \xi \cdot \nabla_\xi) U/2$ arises from the time derivative $\p_t U$.

For our purposes, it will be enough to construct a single trajectory on the unstable manifold. This part of the argument, which we carry out in Section~\ref{sec:nonlinearinstability}, is analogous to that of %Jia and {\v S}ver{\'a}k~
\cite[Theorem 4.1]{jiasverakillposed}, though with  technical differences. For instance, no truncation to achieve finite energy will be necessary here.

% \mc{: \da{Mention here that there is a $1$-param family}

%\mc{No general tools in general to build unstable objects. Only tool is ODE analysis.

%Introduce a new principle?

%The central task (instead of remaining)
%NO Despite the remarkable flexibility ...
%}

Before proceeding, we make two crucial observations: First, motivated by the force in Vishik's work, we allow ourselves a force in~\eqref{eq:similaritynavierstokes}. Then any smooth function of space may be considered a steady solution of the PDE. Second, it is enough to find an unstable steady state for the Euler equations. Heuristically, by increasing the size of the background $\bar{U}$, the main terms in the linearized operator~\eqref{eq:linearizedsimilaritynavierstokes} are those arising from the nonlinearity of the equation; the extra terms, including the Laplacian, can be considered perturbatively. We make this reduction rigorous in Section~\ref{sec:eulertonavierstokes}.

Our central task is therefore to \emph{find a smooth and decaying unstable steady state of the forced Euler equations in three dimensions}. This task is far from elementary. First of all, there is no general tool to construct unstable solutions. Rather, unstable solutions are typically sought among explicit solutions with many symmetries and subsequently analyzed by ODE techniques. In three dimensions, the most natural explicit steady states are \emph{shear flows} and \emph{vortex columns}, which do not decay and hence have infinite energy. {Remarkably, to our knowledge, there is no existing unstable solution in three dimensions suitable for our purposes, with or without forcing.} %and, therefore, do not easily truncate to finite energy.

In two dimensions, a natural class of explicit steady states is comprised of \emph{vortices}:
\begin{equation}
\bar{u} = \bar{u}^\varphi(\varrho) e_\varphi,    
\end{equation}
 written here in polar coordinates $(\varrho,\varphi)$. In~\cite{Vishik2}, Vishik constructed unstable vortices with $\bar{\omega}$ having power-law decay as $\varrho \to +\infty$, see Theorem~\ref{thm:vishiksunstableprofile} below.\footnote{At the time of our writing, Vishik's papers~\cite{Vishik1,Vishik2} remain unpublished. A careful exposition of his construction of unstable vortices, with some technical deviations and simplifications, is contained in~\cite{OurLectureNotes} by the present authors and De Lellis, Giri, Janisch, and Kwon.} Interestingly, the motivation for Vishik's construction originates in the classical work of Tollmien~\cite{Tollmien} on shear flows (see the summary in~\cite{DrazinReid}). The rigorous construction of unstable shear flows is due to~\cite{LinSIMA2003}.
 
 %\mc{, a class of solutions recently employed in \cite{elgindiblowup} to show blow up from $C^{1,\alpha}$ initial data\mccomment{did not find a better place to cite Elgindi...}}. 

%Our approach is to `lift' Vishik's unstable vortex in two dimensions to an unstable \emph{vortex ring} in three dimensions. 
Our approach is to construct a \emph{vortex ring} which `lifts' Vishik's unstable vortex to three dimensions. The vortex ring we are looking for is an axisymmetric Euler solution without swirl. Recall that axisymmetric-no-swirl velocity fields are of the form $u = u^r(r,z) e_r + u^z(r,z) e_z$. The corresponding vorticity $\omega$ and stream function $\psi$ satisfying $-\Delta \psi = \omega$ are pure swirl: $\omega = - \omega^\theta(r,z) e_\theta$ and $\psi = \psi^\theta(r,z) e_\theta$.\footnote{We impose the sign convention $\omega^\theta = - \omega \cdot e_\theta$ so that $\omega^\theta$ in the axisymmetric-no-swirl setting acts like $\omega$ ($= \omega^z$) does in the two-dimensional setting. Indeed, we have that $e_x \times e_y = e_z$ whereas $e_r \times e_z = -e_\theta$.} In these variables, the Euler equations become
\begin{equation}
    \label{eq:eulerintro}
    \p_t \omega^\theta + u \cdot \nabla \omega^\theta  - \frac{u^r}{r} \omega^\theta = 0
\end{equation}
\begin{equation}
    \label{eq:psithetaequation1}
    \left( \p_r^2  + \frac{1}{r} \p_r  - \frac{1}{r^2}  + \p_z^2 \right) \psi^\theta = \omega^\theta
\end{equation}
\begin{equation}
    u = -\p_z \psi^\theta e_r + \left( \p_r + \frac{1}{r} \right) \psi^\theta e_z.
\end{equation}
Here is the key observation: \emph{At large distances $r \to +\infty$, the axisymmetric Euler equations without swirl formally converge to the two-dimensional Euler equations}
\begin{equation}
    \p_t \omega + u \cdot \nabla \omega = 0
\end{equation}
\begin{equation}
    \label{eq:psithetaequation2}
    \Delta_{x,y} \psi = \omega, \quad u = \nabla^\perp \psi
\end{equation}
with the axisymmetric variables $(r,z)$ corresponding to the two-dimensional variables $(x,y)$.\footnote{The vector Laplacian satisfies $\Delta = \nabla \div + \nabla^\perp \curl$ in two dimensions and $\Delta = \nabla \div - \curl \curl$ in three dimensions. This is a manifestation of the definition (up to a sign convention on the Laplacian) $\Delta = dd^* + d^*d$ on differential $k$-forms. The signs in the equations~\eqref{eq:psithetaequation1} and~\eqref{eq:psithetaequation2} match under the convention that $\omega^\theta = - \omega \cdot e_\theta$.} %Indeed, the vortex stretching term $u^r \omega^\theta/r$ and the additional terms in the Biot-Savart formally drop.
To exploit the key observation, \emph{we put Vishik's unstable vortex into the axisymmetric variables $(r,z)$ and center it at a large distance $r = \ell$.}    This construction corresponds to a `very long' vortex ring. To this end, we first construct a compactly supported unstable vortex in Proposition~\ref{pro:truncatedunstablevortex} by truncating Vishik's vortex. %By soft arguments, 
We demonstrate in  Proposition~\ref{pro:axisymmetricinstab} that, 
for large enough $\ell$, the resulting vortex ring, whose cross-sectional profile is this new unstable profile, is also unstable.%\dacomment{* took out `by soft argument'}

 %We recall his result briefly. The linearized 2d Euler equations in vorticity form are
%\begin{equation}
    %\p_t \omega + \bar{u} \cdot \nabla \omega + u \cdot \nabla \bar{\omega}  = 0
%\end{equation}
%\begin{equation}
    %\Delta \psi = \omega, \quad u = \nabla^\perp \psi
%\end{equation}
%Our proof relies crucially on the following theorem due to Vishik:

%...

%Vortex columns. However, in order to ensure finite energy, the resulting vortex column must truncated, and the stability of the resulting object is unclear. The same difficulty makes shear flows bad candidates. The most well known examples are shear flows and vortex columns, which do not truncate well to finite energy solutions (the truncations are not good approximations to the original object, so there are issues with controlling the solutions).

%\dacomment{* Notice change. The letters (A) and (B) are used the last section}

The above arguments culminate in the following refined version of Theorem~\ref{thm:introthm}.
\begin{theorem}[Non-uniqueness, refined]
    \label{thm:refined}
There exists a smooth, compactly supported velocity profile $\bar{U}$ and a smooth, compactly supported force profile
 \begin{equation}
     \bar{F} := -\frac{1}{2} (1+ \xi \cdot \nabla_\xi) \bar{U} - \Delta \bar{U} + \bar{U} \cdot \nabla \bar{U}
 \end{equation}
satisfying the following properties:
\begin{enumerate}[label=(\Alph*)]
\item\label{item:maintheorema} The linearized operator $\L_{\rm ss}$ defined in~\eqref{eq:linearizedsimilaritynavierstokes} has an unstable eigenvalue $\lambda$ with non-trivial smooth eigenfunction $\eta$ belonging to $H^k(\R^3)$ for all $k \geq 0$: 
\begin{equation}
    \L_{\rm ss} \eta = \lambda \eta \quad \text{ and } \quad a := \Re \lambda > 0.
\end{equation}
 Consider the solution
\begin{equation}
U^{\rm lin}(\cdot,\tau) = \Re ( e^{\tau \lambda} \eta )
\end{equation}
of the linearized equation  $\p_\tau U^{\rm lin} = \L_{\rm ss} U^{\rm lin}$ in $\R^3 \times \R$.
\item\label{item:maintheoremb} There exists $T \in \R$ and a velocity field $U^{\rm per} : \R^3 \times (-\infty,T] \to \R^3$ satisfying
\begin{equation}
\| U^{\rm per}(\cdot,\tau) \|_{H^k} \les_k e^{2 \tau a} \quad \forall \tau \in (-\infty,T]
\end{equation}
for all $k \geq 0$, and, in the notational convention~\eqref{eq:definitionofuandU} above,
\begin{equation}
    \bar{u}, \quad u = \bar{u} + u^{\rm lin} + u^{\rm per}
\end{equation}
are two distinct suitable Leray--Hopf solutions of the Navier--Stokes equations~\eqref{eq:ns} on $\R^3 \times (0,e^T)$ with initial data $u_0 \equiv 0$ and forcing term $\bar{f}$.
\end{enumerate}
%In particular, $\bar{u}$ and $u$ are distinct suitable Leray--Hopf solutions on $\R^3 \times (0,e^T)$.
\end{theorem}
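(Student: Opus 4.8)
The plan is to obtain Theorem~\ref{thm:refined} by chaining together the three main constructions announced in the introduction, in the following order. \emph{Step 1: the two-dimensional input.} Start from Vishik's unstable vortex $\bar u = \bar u^\varphi(\varrho) e_\varphi$ (Theorem~\ref{thm:vishiksunstableprofile}), whose vorticity has power-law decay. Since this profile is not compactly supported, truncate it to produce a smooth compactly supported planar vorticity profile that is still linearly unstable for the two-dimensional Euler equations; this is Proposition~\ref{pro:truncatedunstablevortex}. The point is that instability is an open condition controlled by an unstable eigenvalue, so a sufficiently gentle far-field cutoff perturbs the linearized operator by a small (relatively compact) amount and preserves an eigenvalue in the right half-plane.

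\emph{Step 2: lift to a vortex ring.} Transplant the truncated planar profile from Step~1 into the axisymmetric-without-swirl variables $(r,z)$ and recenter it at $r=\ell$, obtaining a smooth compactly supported axisymmetric Euler steady state — a `very long' vortex ring. Using the key observation that as $r\to+\infty$ the axisymmetric-no-swirl Euler system~\eqref{eq:eulerintro}--\eqref{eq:psithetaequation2} converges to two-dimensional Euler, show that for $\ell$ large the extra terms (the $u^r\omega^\theta/r$ term in the vorticity equation and the $1/r\,\p_r - 1/r^2$ corrections in the elliptic problem for $\psi^\theta$) are small perturbations of the two-dimensional linearized operator; hence the ring inherits an unstable eigenvalue. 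This is Proposition~\ref{pro:axisymmetricinstab}, and it produces an unstable steady state of the \emph{forced} three-dimensional Euler equations, smooth and compactly supported.

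\emph{Step 3: Euler to Navier--Stokes in similarity variables.} Freely scale up the size of the Euler background so that the nonlinear terms in the similarity-variable linearized operator~\eqref{eq:linearizedsimilaritynavierstokes} dominate; then the Laplacian and the scaling term $-\tfrac12(1+\xi\cdot\nabla_\xi)$, as relatively bounded perturbations, cannot destroy the unstable eigenvalue. This yields item~\ref{item:maintheorema}: an unstable eigenvalue $\lambda$ with $a=\Re\lambda>0$ and smooth eigenfunction $\eta\in\bigcap_k H^k$, and with it the explicit linearized solution $U^{\rm lin}(\cdot,\tau)=\Re(e^{\tau\lambda}\eta)$. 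The compactly supported $\bar U$ defines $\bar F$ by the stated formula, and since $\bar U$ is smooth and compactly supported it is automatically a (global, smooth) steady solution of the forced similarity system~\eqref{eq:similaritynavierstokes}.

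\emph{Step 4: nonlinear unstable manifold / gluing.} Construct a single trajectory $U=\bar U + U^{\rm lin}+U^{\rm per}$ on the unstable manifold of $\bar U$ by the standard fixed-point scheme on $(-\infty,T]$: write the Duhamel formula for the remainder $U^{\rm per}$ driven by the quadratic error $N(U^{\rm lin}+U^{\rm per})$, use the spectral gap for $\L_{\rm ss}$ on the relevant subspace to get semigroup bounds $\|e^{\tau\L_{\rm ss}}\|_{H^k\to H^k}\lesssim e^{\tau a'}$ for $a'$ slightly below $a$, and close the contraction in a weighted space enforcing $\|U^{\rm per}(\cdot,\tau)\|_{H^k}\lesssim e^{2a\tau}$. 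This mirrors \cite[Theorem~4.1]{jiasverakillposed} but is simpler here: no truncation to finite energy is needed because $\bar U$ and $\eta$ are already Schwartz-class/compactly supported. Finally translate back via~\eqref{eq:definitionofuandU}: the decay $\|U^{\rm per}\|_{H^k}\lesssim e^{2a\tau}$ with $a>0$ forces $u^{\rm lin}, u^{\rm per}\to 0$ in $L^2$ as $t\to0^+$, so both $\bar u$ and $u=\bar u+u^{\rm lin}+u^{\rm per}$ attain $u_0\equiv 0$, lie in $L^\infty_t L^2_x\cap L^2_t\dot H^1_x$ on $\R^3\times(0,e^T)$, are smooth for $t>0$, share the force $\bar f$, and are distinct because $U^{\rm lin}\not\equiv 0$; suitability and the energy inequality follow from smoothness on positive times together with the decay as $t\to0^+$. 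This gives item~\ref{item:maintheoremb}.

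The main obstacle is Step~1--2, i.e.\ producing a genuine unstable eigenvalue for a \emph{smooth, decaying, three-dimensional} steady state: Vishik's spectral analysis is delicate and tied to the precise structure of the vortex, so one must check that truncation and the axisymmetric lift are sufficiently mild perturbations — in an operator-theoretic sense that controls the essential spectrum and preserves an isolated eigenvalue in $\{\Re\lambda>0\}$ — rather than merely small in norm. Step~4 is comparatively routine given a spectral gap, and Step~3 is a soft perturbation argument once the relative boundedness of $-\Delta$ and the scaling term against the (rescaled) nonlinear part is quantified.
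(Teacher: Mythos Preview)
Your outline matches the paper's architecture: truncate Vishik's vortex (Proposition~\ref{pro:truncatedunstablevortex}), lift to an axisymmetric ring at large radius (Proposition~\ref{pro:axisymmetricinstab}), pass from Euler to Navier--Stokes by scaling up the background (Section~\ref{sec:eulertonavierstokes}), and then build a trajectory on the unstable manifold by a Duhamel fixed point (Section~\ref{sec:nonlinearinstability}). Two places where your description of the mechanism diverges from what the paper actually does are worth flagging.

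In Step~3 you call the Laplacian and the scaling drift ``relatively bounded perturbations'' of the transport part. They are not: $-\Delta$ is second order while $\bar U\cdot\nabla$ is first order, so no relative-boundedness argument in the sense of Kato applies. The paper instead proves strong resolvent convergence $R(\lambda,\beta^{-1}\D+\M+\S)\to R(\lambda,\M+\S)$ via an \emph{inviscid-limit} energy estimate on the time-dependent advection--diffusion problem (Lemma~\ref{lemma:onlynew}), then upgrades to the full operator using compactness of $\K$ and finite-rank approximation (Lemma~\ref{l:two}). This is the genuine content of the Euler-to-Navier--Stokes step; a soft perturbation argument would not close.

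In Step~4 you invoke a spectral gap giving semigroup decay $e^{\tau a'}$ with $a'<a$ on a ``relevant subspace.'' The paper does not project out the unstable mode and does not use any gap below $a$. Instead it chooses $\lambda$ to be \emph{maximally} unstable, so the full semigroup satisfies $\|e^{\tau\L_{\rm ss}}\|\lesssim e^{\tau(a+\delta)}$ (Proposition~\ref{prop:specLOmega}), and closes the contraction in a space weighted by $e^{-(a+\eps_0)\tau}$ with $\eps_0=a/2$; the refined $e^{2a\tau}$ decay is obtained afterward by a bootstrap (Section~\ref{sec:refineddecay}). Your projected-subspace route would also work but requires additional spectral information you do not state.
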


The solutions constructed above live at critical regularity and therefore satisfy~\eqref{eq:energyinequality} and~\eqref{eqn:suitable} with equality. %; in particular, the kinetic energy is a continuous function of time.
One may easily verify that for any $p \in [2,+\infty]$, $k \geq 0$, and $t \in (0,e^T)$, we have
    \begin{equation}\label{eq: tilde u}
		t^{\frac{k}{2}} \| \nabla^k \bar  u(\cdot, t) \|_{L^p} + t^{\frac{k}{2}} \| \nabla^k  u(\cdot, t)  \|_{L^p} \les_k t^{\frac{1}{2} (\frac{3}{p}-1)} \, ,
	\end{equation}
     \begin{equation}\label{eq:Force Lp}
    	t^{\frac{k}{2}} \| \nabla^k f(\cdot, t)\|_{L^p} 
    	\les_k t^{\frac{1}{2} (\frac{3}{p} - 3)}
    	 \, .
    \end{equation}
    
Our solutions moreover do not break the axisymmetric without swirl structure. However, it is likely that our instability does break the following \emph{nearly} symmetric structure inherited from Vishik's vortex.
Namely, % the cross-section of the vortex ring is a small perturbation of Vishik's vortex. T
in two dimensions, Vishik's instability breaks radial symmetry, retaining only strict $m$-fold rotational symmetry (see \eqref{def:m-fold} below). The cross section of our vortex ring is nearly radial, and although we do not prove it, our unstable direction is likely nearly $m$-fold symmetric but not nearly radial. It is further striking that non-uniqueness holds already in this class because initially smooth axisymmetric solutions without swirl are known to remain globally smooth~\cite{ladyzhenskayaaxisymmetric}. This suggests that non-uniqueness and finite-time singularity are separate issues, though in models exhibiting both, the two issues may be related through the problem of continuation past the singularity.

\subsection{Comparison with existing literature}
    \label{sec:comparisonwithexisting}
We now review the above cited literature more in detail and point out certain connections with the present work.

\emph{Self-similarity, instability, and non-uniqueness in the Navier-Stokes equations: Jia, {\v S}ver{\'a}k, and Guillod's program~\cite{jiasverakselfsim,jiasverakillposed,guillodsverak}}.  Let $a_0 \in C^\infty(\R^3 \setminus \{ 0 \})$ be divergence free and scaling invariant and $\sigma \in \R$ be a size parameter. One may ask whether there exists a scaling invariant solution $u_\sigma$ with initial data $u_{0,\sigma} = \sigma a_0$. For $|\sigma| \ll 1$, existence and uniqueness falls into the known perturbation theory of Koch and Tataru~\cite{kochtataru} in ${\rm BMO}^{-1}$, for example. For arbitrary size, existence was demonstrated in~\cite{jiasverakselfsim} via %. The original proof was based on 
a new local-in-space smoothing estimate and Leray--Schauder degree theory (see~\cite{bradshawtsaiII} and the references it cites for a simplified approach). Thus, the self-similar solutions constructed in~\cite{jiasverakselfsim} with initial data $u_{0,\sigma}$ constitute a continuous-in-$\sigma$ branch connected to the origin, and  non-uniqueness in the Cauchy problem for~\eqref{eq:ns} is conjectured in~\cite{jiasverakillposed} to arise due to bifurcations within or from this class of solutions.\footnote{One may draw analogy between steady states of~\eqref{eq:ns} and~\eqref{eq:similaritynavierstokes}. Bifurcations among steady solutions of~\eqref{eq:ns} (and, hence, non-uniqueness for certain boundary data) occur naturally, as in the emergence of Taylor vortices in Taylor-Couette flow.}
 In~\cite{jiasverakillposed}, spectral conditions on the linearized operator $\L_{\rm ss}^{(\sigma)}$ (that is, linearized around the profiles $U_{\sigma}$) are identified for such bifurcations to occur. Moreover, it is shown how to truncate the solutions to have finite energy without introducing a force (for this, it is roughly necessary to `watch' the eigenvalue become unstable). One of the spectral conditions was verified numerically % using numerical continuation
in~\cite{guillodsverak} on certain axisymmetric examples with pure swirl initial datum. %, though not currently at the level of computer-assisted proof. The computed examples are within the  class; the initial data is a  profile. The solution breaks a reflection symmetry across the $z$-axis.

One of the main difficulties in rigorous verification of the spectral condition in~\cite{jiasverakillposed} is that the self-similar solutions are non-explicit. %; only their asymptotics as $|x| \to \infty$ are known.
In our work, the inclusion of a force gives us the freedom to search for a more explicit unstable profile. %\dacomment{I find the language in this sentence a little funny, with its two `more'}
%the inclusion of a force allows one to choose the object and therefore make it explicit.

%Similar bifurcations are known to occur rigorously in the harmonic map heat flow~\cite{Germainharmonicmap}.

 %This paradigm partially inspires the present approach.

\emph{Self-similarity, instability, and non-uniqueness in the Euler equations}. As in the above mentioned works and the present work, self-similarity and instability lie at the core of two recent results on the sharpness of the Yudovich class in the two-dimensional Euler equations, that is, non-uniqueness of solutions with vorticity in $L^p$, $p < +\infty$.  Vishik~\cite{Vishik1} obtains non-uniqueness with a force  belonging to $L^1_t L^p_x$ in the right-hand side of the vorticity equation. His approach is based on the construction~\cite{Vishik2} of an unstable vortex which we use in an essential way (see also the exposition~\cite{OurLectureNotes} by the authors and four others).

Bressan, Murray, and Shen~\cite{BressanSelfSimilar,BressanAposteriori} proposed an initial datum which is expected to generate non-unique Euler solutions without a force. Their mechanism of instability is different from Vishik's and partially inspired by the vortex spirals of Elling~\cite{EllingAlgebraicSpiral,EllingSelfSimilar}. Akin to \cite{jiasverakillposed,guillodsverak}, it is based on a combination of analytical and numerical evidence which is highly suggestive, though not yet known to rise to the level of computer-assisted proof.

\emph{Convex integration}.
Buckmaster and Vicol \cite{BuckmasterVicolAnnals} (see also \cite{BuckmasterColomboVicol}) demonstrated non-uniqueness of distributional solutions to the Cauchy problem for the Navier-Stokes equations via the convex integration method for any $L^2$ initial datum. Their proof introduces the important tool of \emph{intermittency} and builds on the fundamental works for the Euler equations and the proof of the Onsager conjecture~\cite{IsettOnsager,OnsagerAdmissible} (see~\cite{convexintegrationconstructionsinturbulence,buckmaster2021non} for further connections to fluid turbulence). The solutions of~\cite{BuckmasterVicolAnnals} {have finite kinetic energy ($u \in L^\infty_t L^2_x$) but do not belong to the energy class~\eqref{eq:minimumregularity}}; more generally, all known convex integration schemes are far from reaching the regularity $\nabla u \in L^2_{t,x}$ necessary to speak of Leray solutions.  

Non-uniqueness of Leray solutions was instead proved in~\cite{mariahypodissipativeonefifth,DeRosa19}
for the \emph{hypodissipative} Navier-Stokes equations, namely, with the Laplacian in \eqref{eq:ns} replaced by a fractional Laplacian $(-\Delta)^{\alpha}$ of order $\alpha < 1/3$.
In forthcoming work~\cite{DallasMariaFractional} by the first and third authors, we  demonstrate non-uniqueness of Leray solutions of the forced Navier-Stokes equations in two dimensions in the whole supercritical range $\alpha < 1$. As in the present work, our methods are based on self-similarity and instability.

Recently, convex integration was also used~\cite{cheskidov2020sharp,cheskidovl2critical} 
to demonstrate the sharpness of certain classical results, obtaining in particular non-uniqueness in $L^q_t L^\infty_x$, $q < 2$, in any dimension $d \geq 2$, and in $C_t L^p_x$, $p < 2$, in dimension $d=2$, respectively.

\emph{Vortex filaments}. Our vortex ring construction is partially inspired by, though rather different from, the works~\cite{GallaySverakUniqueness2019,BedrossianVortexFilaments2018} on vortex filaments. In those papers, existence and uniqueness were established for a class of Navier-Stokes solutions whose initial vorticity is a Dirac mass along a curve. The infinitesimal structure of these vortex filaments is the two-dimensional \emph{Lamb--Oseen vortex}, whose stability was demonstrated in~\cite{GallayWayne} (see also~\cite{gallaghergallay,gallaghergallaylions}). We mention also~\cite{gallaysverakremarks}, which, motivated by vortex rings, discusses analogies between the two-dimensional and axisymmetric-without-swirl Navier-Stokes equations from the perspective of well-posedness.%\dacomment{How is this now?}

%Navier--Stokes solutions whose initial vorticity is a Dirac mass along a curve. The infinitessimal structure 

%, whose cross-section 
%There are, however, two main differences between our approach and the above mentioned papers. First, the cross-section of the vortex filament has radius $\sim \sqrt{t}$, and its infinitesimal structure is stable, as demonstrated  in~; our cross-sectional profile is unstable.
%Our ansatz can be interpreted as a vortex ring whose infinitesimal structure is a suitably chosen unstable vortex rather than the Lamb--Oseen vortex.
%Secondly, to ensure finite energy, the diameter of the ring (in addition to the cross-sectional radius) needs to shrink as $t \to 0^+$, whereas it is typically $O(1)$ in the previous works. A natural choice is to shrink everything at the self-similar rate, and we are led back to our ansatz. The similarities between two-dimensional and axisymmetric-without-swirl equations are also discussed from the perspective of well-posedness in~\cite{gallaysverakremarks}.

\emph{Open problems}. As mentioned above, the main task of this paper is to find an unstable (forced) Euler steady state. The steady state we choose is a perturbation of a particular (unforced) unstable vortex of Vishik whose construction is far from simple. Given the flexibility afforded by a force, it would be interesting to know how generic %\dacomment{Meh, I decided not to cite Grenier}
this instability is and to develop a general method for finding these unstable (forced) solutions. It is, moreover, an open problem to remove the force.

\section{Linear instability: from two dimensions to axisymmetry}
\label{sec:twodimtoaxisym}

\subsection{Spectral preliminaries}\label{sec:spectralprelim}
%To begin, we review basic notions in the spectral theory of unbounded operators.
% \dacomment{* minor changes here}

Let $H$ be a Hilbert space and $\L : D(\L) \subset H \to H$ be closed and densely defined. Then $\D(L)$ is a Hilbert space with the graph norm. The basic theory of these operators is reviewed in \cite[VIII: Unbounded Operators]{ReedSimonI}. A simple criterion for self-adjointness is that $\ker \L + i$ and $\ker \L - i$ are trivial. %, suffices in our setting.

The \emph{resolvent set} $\rho(\L)$ consists of those $\lambda \in \mathbb{C}$ such that $\lambda - \L : D(\L) \to H$ is invertible. Its inverse $R(\lambda, \L) : H \to D(\L)$ is known as the \emph{resolvent}, and it is bounded by the open mapping theorem. The \emph{spectrum} $\sigma(\L)$ is the complement of the resolvent set.
In our convention, the \emph{essential spectrum} $\sigma_{\rm ess}(\L)$ is the set of spectral values for which $\lambda - \L$ is not Fredholm of index zero.\footnote{The essential spectrum is sometimes defined otherwise, as the set where $\lambda - \L$ is not semi-Fredholm~\cite{Katobook}, or as the set where $\lambda - \L$ is not Fredholm~\cite{EngelNagel}.} The essential spectrum is closed. Moreover, \cite[Theorem 5.28]{Katobook} (see also the discussion in \cite[Chapter 4, Section 6]{Katobook}) can be used to show that, in each connected component of $\C \setminus \sigma_{\rm ess}(\L)$,
(i) $\lambda - \L$ is not invertible everywhere, or (ii) $\lambda - \L$ is invertible except at isolated points.

% \eb{Moreover, the analytic Fredholm theorem can be used to show that each connected component of this set satisfies:
% $\lambda - \L$ is not invertible everywhere, or $\lambda - \L$ is invertible except at isolated points.\footnote{It is not that the operator is compact; rather, you consider $X = X_0 \oplus X_1$, $Y = Y_0 \oplus Y_1$, etc. Need to take projections...}}

% \da{See Camillo's justification}

An operator $\T : D(\L) \to H$ is \emph{relatively compact with respect to $\L$} if $\T$ is compact when $D(\L)$ is considered with the graph norm. A key property is that relatively compact perturbations do not disturb the essential spectrum: $\sigma_{\rm ess}(\L+\T) = \sigma_{\rm ess}(\L)$, see \cite[Theorem 5.26]{Katobook}.

 We frequently use the following reasoning: Consider an operator $\L$ as above whose essential spectrum is known to be contained in a half space $\{ \Re \lambda \leq \mu \}$. We perturb $\L$ by a relatively compact perturbation $\T$, so that $\sigma_{\rm ess}(\L + \T) = \sigma_{\rm ess}(\L)$. Furthermore, we typically know that the perturbed operator $\L + \T$ is invertible for $\Re \lambda \gg 1$. Hence, the spectrum in $\{ \Re \lambda > \mu \}$ consists of isolated points.
 
 %and for $|\Im \lambda| \gg 1$.

\subsection{Two-dimensional instability}

To begin, we recall the unstable vortex constructed in~\cite{Vishik2}, see also~\cite{OurLectureNotes}.

For integer $m \geq 2$, we define the following space of $m$-fold rotationally symmetric functions:
\begin{equation} \label{def:m-fold}
    L^2_m := \{ f \in L^2(\R^2) :  f (R_{\frac{2\pi}{m}} x) = f(x) \, \mbox{for a.e. }x\in \R^2 \},
\end{equation}
where $R_{\frac{2\pi}{m}}:\R^2 \to \R^2$ is counterclockwise rotation by $2\pi/m$.

Typically, the two-dimensional Biot-Savart law $\BS_{2d}[f] := \nabla^\perp \Delta^{-1} f = K_2 \ast f$, considered on $L^2(\R^2)$, is only well defined \emph{up to constants} and satisfies $L^2(\R^2) \to \dot H^1(\R^2;\R^2)$. An important point is that the Biot-Savart law is well defined on $L^2_m$ \emph{within the class of $m$-fold rotationally symmetric functions}, since the $m$-fold rotational symmetry sets the mean velocity on $B_R$ to zero for all $R > 0$. %This is discussed further in~\cite[Lemma 2.4.1]{OurLectureNotes}.

% We use the notation $(\varrho,\varphi)$ for polar coordinates, since $(r,\theta)$ is reserved for the axisymmetric context.

%\da{I think $\L$ gets a different name here, maybe $\bm{T}$ or $\bm{A}$, or something, and reserve $\L$ for the next section}

For $f\in L^2(\R^2)$, using polar coordinates $(x,y)=(\varrho \cos \varphi, \varrho \sin \varphi)$ and expanding in Fourier series, we can write
\begin{equation}
    f(x,y) = \sum_{k\in \mathbb{Z}} f_k(\varrho) e^{ik \varphi} \, ,
    \qquad
     f_k(\varrho) := \frac{1}{2\pi} \int_0^{2\pi} f(\varrho, \varphi) e^{-ik\varphi} d \varphi \, .
\end{equation}
The Plancherel identity gives
\begin{equation}
    \| f \|_{L^2(\R^2)}^2 = 2\pi \sum_{k\in \mathbb{Z}} \| f_k \|_{L^2(\R_+, \, \varrho \, d\varrho)}^2 \, .
\end{equation}
It is immediate to check that $f\in L^2_m$ if and only if $f_k = 0$ whenever $m$ does not divide $k$. It is convenient to introduce the following decomposition of $L^2_m$:
\begin{equation}\label{eq: L2 decomp}
    L^2_m = \oplus_{k \in \Z} U_k \, ,
    \qquad
    U_k := \{ f(x) = g(\varrho) e^{ikm\varphi} \, : \, g\in L^2(\R_+, \, \varrho \, d\varrho) \} \, ,
\end{equation}
where $U_k$ are closed, mutually orthogonal subspaces of $L^2_m$.

\medskip

Let us consider a smooth, divergence-free vector field $\bar u(x) = \zeta(|x|) x^{\perp}$ that decays at infinity, where $x^\perp=(-x_2, x_1)$. We set
\begin{equation}
    \bar \omega = \curl \bar u = \nabla^\perp \cdot \bar u \, ,
\end{equation}
which is a radial function.

% radially symmetric $\bar{\omega} = \bar{\omega}(\varrho)$ satisfying\dacomment{it is not sensitive to these decay rates}
% \begin{equation}
%     \label{eq:vorticitycondition}
% |\bar{\omega}| + \varrho |\p_{\varrho} \bar{\omega}| \leq C(\bar{\omega}) \la \varrho \ra^{-2},
% \end{equation}

We then define
\begin{equation}
    \label{eq:adef1}
    \A : D(\A) \subset L^2_m \to L^2_m, \quad D(\A) := \{ \omega \in L^2_m : \bar{u} \cdot \nabla \omega \in L^2_m \},
\end{equation}
\begin{equation}
    \label{eq:adef2}
- \A \omega = \bar{u} \cdot \nabla \omega + u \cdot \nabla \bar{\omega},
\end{equation}
where $u = \BS_{2d}[\omega]$. 
It is verified in~\cite{Vishik1,Vishik2,OurLectureNotes} that $\K\omega := u \cdot \nabla \bar{\omega}$ is a compact operator $\K : L^2_m \to L^2_m$. The operator $\A$ is closed and densely defined. By regarding it as a compact perturbation of the skew-adjoint operator $\omega \mapsto \bar{u} \cdot \nabla \omega$, we have that the essential spectrum of $\A$ belongs to the imaginary axis, and the remainder of the spectrum is discrete, consisting of countably many eigenvalues of finite algebraic multiplicity.

We now state a version of Vishik's theorem in~\cite{Vishik2}, see also \cite[Theorem 3.0.1]{OurLectureNotes}.
%The following result is borrowed from \cite{Vishik2},

\begin{theorem}
    \label{thm:vishiksunstableprofile}
There exists $m \geq 2$ and a smooth, radially symmetric vorticity profile $\bar \omega = \bar \omega(\varrho)$ satisfying
\begin{equation}
    |\bar{\omega}| + \varrho |\p_{\varrho} \bar{\omega}| \les \la \varrho \ra^{-2}
\end{equation}
such that $\A \: D(\A) \subset L^2_m \to L^2_m$ has an unstable eigenvalue $\lambda$ (that is, $\Re{\lambda} > 0$).
\end{theorem}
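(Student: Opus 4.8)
The plan is to produce Vishik's unstable vortex by a perturbative/bifurcation argument in the radial self-similarity parameter, following the strategy sketched after the statement — the instability originates in Tollmien's shear-flow mechanism, so the first move is to reduce the spectral problem for $\A$ on $L^2_m$ to a family of ODE eigenvalue problems on the half-line. Using the decomposition $L^2_m = \oplus_k U_k$ from~\eqref{eq: L2 decomp}, and the fact that $\A$ commutes with rotations, $\A$ is block-diagonal: on $U_k$ (functions $g(\varrho)e^{ikm\varphi}$) it acts as a one-dimensional operator $\A_{km}$. Writing $\bar u = \zeta(\varrho)x^\perp$ so that the angular velocity is $\Omega(\varrho) = \zeta(\varrho)$, the transport term $\bar u\cdot\nabla$ becomes multiplication by $ikm\,\Omega(\varrho)$, and the compact term $\K$ becomes $i k m\, \Omega'(\varrho)/\bar\omega'$-type coupling through the Biot--Savart kernel restricted to the $km$-th angular mode (a second-order ODE with a Green's function). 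So an eigenvalue $\lambda$ with eigenfunction in $U_k$ corresponds to a nontrivial solution of a Rayleigh-type ODE; one then seeks a profile $\bar\omega$ for which this ODE has a solution with $\Re\lambda>0$ for some $k$.

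The key steps, in order: (1) Fix the angular wavenumber and reduce to the ODE eigenvalue problem on $(0,\infty)$ as above, identifying the neutral modes ($\Re\lambda = 0$) as the relevant bifurcation points; by the spectral discussion preceding the statement, the essential spectrum of $\A$ sits on $i\R$, so any eigenvalue crossing into $\{\Re\lambda>0\}$ must pass through a neutral eigenvalue on the imaginary axis. (2) Construct a one-parameter family of profiles $\bar\omega_\mu$ (e.g. concentrating or rescaling an ansatz built from the Tollmien construction, suitably adapted from the shear case in~\cite{LinSIMA2003,Tollmien} to the radial/vortex geometry) along which a neutral mode exists at $\mu = \mu_0$. (3) Show this neutral eigenvalue is \emph{not} embedded in a degenerate way — i.e. verify a transversality/non-degeneracy condition so that as $\mu$ crosses $\mu_0$ the eigenvalue genuinely moves off the imaginary axis into the right half-plane, rather than staying pinned to it; this typically amounts to computing a derivative of a dispersion relation (a Wronskian-type determinant) and checking it is nonzero, using the sign of some integral involving $\bar\omega$, $\Omega$, and the Green's function. (4) Verify the decay $|\bar\omega| + \varrho|\p_\varrho\bar\omega|\lesssim\langle\varrho\rangle^{-2}$ and smoothness are preserved by the construction, and extract the value of $m$ for which the unstable mode lives in $U_1 \subset L^2_m$.

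The main obstacle I expect is step (3): establishing that the neutral mode is an honest bifurcation point and that the eigenvalue actually enters $\{\Re\lambda>0\}$. Embedded neutral modes on the essential spectrum are notoriously delicate — one must rule out that the putative eigenvalue is merely a resonance or that the relevant Evans/Wronskian function has a higher-order zero, and the continuity of eigenvalues near the essential spectrum is not automatic. This is precisely the technical heart of Vishik's papers~\cite{Vishik1,Vishik2} (and the careful re-exposition in~\cite{OurLectureNotes}), and a self-contained treatment would require a quantitative limiting-absorption-type analysis of the resolvent near the imaginary axis, careful control of the ODE solutions' behavior at $\varrho = 0$ and $\varrho = \infty$ (where the operator is singular), and a perturbation argument tracking the eigenvalue as the profile varies. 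A secondary difficulty is choosing the profile family so that the relevant dispersion relation can actually be analyzed — Tollmien's mechanism gives existence of a neutral mode but pinning down the sign conditions for instability in the vortex (rather than shear) setting requires the specific structure Vishik identified. Since the paper explicitly invokes Theorem~\ref{thm:vishiksunstableprofile} as a black box citing~\cite{Vishik2,OurLectureNotes}, I would in practice quote their construction rather than reprove it, and devote any independent effort to verifying that the stated decay and regularity are exactly what is needed downstream.
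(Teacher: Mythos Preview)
The paper does not prove Theorem~\ref{thm:vishiksunstableprofile}; it is stated with attribution to Vishik~\cite{Vishik2} (see also~\cite{OurLectureNotes}) and used as a black box. Your final paragraph correctly anticipates this, and that is the right call: the theorem is an input to the paper, not something it establishes.

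Your outline of what Vishik's argument looks like is broadly accurate --- separation into angular modes $U_k$, reduction to a Rayleigh-type ODE on the half-line, location of a neutral mode, and a perturbation argument to push the eigenvalue into the right half-plane --- and you correctly flag the delicate point (bifurcation from an embedded neutral mode on the essential spectrum) as the technical heart. One small correction to your sketch: the continuation parameter in Vishik's construction is not a rescaling of a fixed profile but rather a deformation within a carefully chosen family of radial vorticity profiles, and the transversality computation is organized somewhat differently from an Evans-function derivative (it goes through explicit asymptotics of the ODE solutions near the critical layer). But these are details internal to~\cite{Vishik2,OurLectureNotes}, and since the present paper only consumes the statement, your instinct to quote the result and move on is exactly what the paper does.
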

 The profile can be chosen to satisfy many additional properties, which we do not require here. {Recall that $\la \varrho \ra = (1 + \varrho^2)^{1/2}$ is the Japanese bracket notation.} {The decay rate $\la \varrho \ra^{-2}$ is not essential; any power-law decay will suffice.} One may multiply the vortex by a prefactor $\beta \gg 1$ to increase $\Re \lambda$.

We now demonstrate that appropriate truncations of unstable vortices remain unstable.
Let $\phi \in C^\infty_0(B_1)$ be radially symmetric with $\phi \equiv 1$ on $B_{1/2}$. For $R>0$, consider $\phi_R(x) := \phi(x/R)$ and the truncation
\begin{equation}
    \bar{u}_R = \phi_R \bar{u}, \quad \bar{\omega}_R = \curl \bar{u}_R = \nabla^\perp \cdot \bar{u}_R.
\end{equation}
We consider also operators $\A_R$ defined analogously to $\A$ in~\eqref{eq:adef1}-\eqref{eq:adef2} with $\bar{u}_R$ replacing $\bar{u}$ and $\bar{\omega}_R$ replacing $\bar{\omega}$.

\begin{proposition}[Truncated unstable vortex]
\label{pro:truncatedunstablevortex}
Let $\bar{\omega}$ be an unstable vortex as in Theorem~\ref{thm:vishiksunstableprofile}. % with $\A$ be defined as above.
Let $\lambda_\infty$ be an unstable eigenvalue of $\A$. For all $\varepsilon \in (0, \Re \lambda_\infty)$, there exists $R_0 > 0$, depending on $\varepsilon$, $\lambda_\infty$, and $\bar \omega$, such that for all $R \geq R_0$, $\A_R$ has an unstable eigenvalue $\lambda_R$ with $|\lambda_R - \lambda_\infty| < \varepsilon$.
\end{proposition}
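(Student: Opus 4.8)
The plan is to view $\A_R$ as a perturbation of $\A$ and show that, as $R \to \infty$, the perturbation is small in the relevant operator-theoretic sense, so that the unstable eigenvalue $\lambda_\infty$ of $\A$ persists for $\A_R$ with $R$ large. Concretely, I would first establish convergence of the truncated data: since $\phi_R \to 1$ pointwise and $\bar u$ decays with $|\bar\omega|+\varrho|\p_\varrho\bar\omega|\les\la\varrho\ra^{-2}$, one has $\bar u_R \to \bar u$ and $\bar\omega_R \to \bar\omega$ in suitable norms (e.g. $\bar u_R \to \bar u$ locally uniformly with a uniform pointwise bound, and $\bar\omega_R\to\bar\omega$ in $L^\infty$, using that $\bar\omega_R - \bar\omega = (\phi_R-1)\bar\omega + R^{-1}(\nabla\phi)(\cdot/R)\cdot\bar u^\perp$ and that $\bar u$ decays). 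The second term is supported in the annulus $R/2 \le |x| \le R$ and is $O(R^{-1}\la R\ra^{-1})$ there, hence small.

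Next I would compare the resolvents. The operator $\A_R + \A$ splits as $\omega \mapsto \bar u_R\cdot\nabla\omega$ (skew-adjoint part, essential spectrum on $i\R$) plus the compact piece $\K_R\omega = \BS_{2d}[\omega]\cdot\nabla\bar\omega_R$. For a fixed $\lambda_\infty$ with $\Re\lambda_\infty>0$, I would fix a small circle $\Gamma$ around $\lambda_\infty$ in $\{\Re\lambda>0\}$ containing no other spectrum of $\A$ (possible since the spectrum there is discrete, by the spectral discussion following \eqref{eq:adef2}), and show $R(\lambda,\A_R)\to R(\lambda,\A)$ uniformly for $\lambda\in\Gamma$. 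Writing $(\lambda-\A_R) - (\lambda-\A) = (\A-\A_R)$ and $(\A-\A_R)\omega = (\bar u - \bar u_R)\cdot\nabla\omega + \BS_{2d}[\omega]\cdot\nabla(\bar\omega-\bar\omega_R)$, one estimates this difference as an operator from $D(\A)$ (graph norm) to $L^2_m$: the first term needs $\|(\bar u-\bar u_R)\cdot\nabla\omega\|_{L^2}$ controlled by $\|\bar u\cdot\nabla\omega\|_{L^2} + \|\omega\|_{L^2}$ times a factor $\to 0$, which follows because $\bar u - \bar u_R = (1-\phi_R)\bar u$ is supported where $|x|\ge R/2$, where one can write $(1-\phi_R)\bar u\cdot\nabla\omega$ in terms of $\bar u\cdot\nabla\omega$; the second term uses $\|\BS_{2d}[\omega]\|_{\dot H^1}\les\|\omega\|_{L^2}$ together with $\|\nabla(\bar\omega-\bar\omega_R)\|_{L^\infty}\to 0$, noting $\nabla\bar\omega_R\to\nabla\bar\omega$ uniformly by the decay hypothesis (one must check the annular term $R^{-1}(\nabla\phi)(\cdot/R)\bar\omega$ and its derivative vanish, again using power-law decay). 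Hence $\|R(\lambda,\A_R) - R(\lambda,\A)\|\to 0$ uniformly on $\Gamma$, and in particular $\Gamma\subset\rho(\A_R)$ for $R$ large.

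Then I would conclude by a Riesz spectral projection argument: the projections $P_R := \frac{1}{2\pi i}\oint_\Gamma R(\lambda,\A_R)\,d\lambda$ converge in norm to $P := \frac{1}{2\pi i}\oint_\Gamma R(\lambda,\A)\,d\lambda$, which has rank equal to the (finite) algebraic multiplicity of $\lambda_\infty$; since the rank is lower semicontinuous and, for close projections, equal, $P_R$ has the same positive rank for $R$ large, so $\A_R$ has spectrum inside $\Gamma$. Shrinking the radius of $\Gamma$ below $\varepsilon$ and choosing $R_0$ accordingly yields an unstable eigenvalue $\lambda_R$ of $\A_R$ with $|\lambda_R-\lambda_\infty|<\varepsilon$. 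I should also note that $\A_R$ itself has the right structural properties (closed, densely defined, essential spectrum on $i\R$, discrete spectrum in the right half-plane) so that spectral values in $\{\Re\lambda>0\}$ are genuine eigenvalues; this follows exactly as for $\A$, since $\bar u_R$ is smooth, compactly supported, and divergence-free and $\K_R$ is compact.

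\textbf{Main obstacle.} The delicate point is the graph-norm estimate on $(\bar u - \bar u_R)\cdot\nabla\omega$: the cutoff error involves $\nabla\omega$, which is not controlled by the graph norm of $\A$ in general (only the combination $\bar u\cdot\nabla\omega$ is). The resolution is that $(1-\phi_R)$ is supported far out where $\bar u$ does not vanish, so on that region $|\nabla\omega|$ is comparable to $|\bar u\cdot\nabla\omega|$ up to the factor $|\bar u|^{-1} \sim \varrho$ — one must exploit the angular structure and the lower bound on $|\bar u| = \varrho|\zeta(\varrho)|$ away from zeros, or instead avoid $\nabla\omega$ altogether by writing the resolvent difference via the second resolvent identity composed with the \emph{smoothing} of the Biot--Savart operator, estimating $(\A-\A_R)R(\lambda,\A)$ as a product of bounded operators. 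I expect the cleanest route is the latter: $(\A_R - \A)R(\lambda,\A)$ maps $L^2_m \to L^2_m$ with small norm because $R(\lambda,\A)$ already absorbs one derivative in the right way, and then a Neumann series for $(\lambda - \A_R)^{-1} = R(\lambda,\A)(I - (\A_R-\A)R(\lambda,\A))^{-1}$ gives the uniform resolvent convergence directly.
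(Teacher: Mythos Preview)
Your overall strategy---resolvent perturbation followed by a Riesz projection argument---is correct and matches the paper. However, the step you flag as the ``main obstacle'' is a genuine gap that your proposed workarounds do not close. You correctly observe that $(\bar u-\bar u_R)\cdot\nabla\omega=(1-\phi_R)(\bar u\cdot\nabla\omega)$ is a scalar multiple of $\bar u\cdot\nabla\omega$, but $\|1-\phi_R\|_{L^\infty}=1$, so this yields no smallness in the graph-norm-to-$L^2$ operator norm; one only gets strong convergence. The second-resolvent-identity route has the same defect: $R(\lambda,\A)$ lands in $D(\A)$, and $(1-\phi_R)(\bar u\cdot\nabla)$ applied to a generic element of $D(\A)$ is not uniformly small. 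The ``lower bound on $|\bar u|$'' idea is also off target, since the graph norm controls only the angular derivative $\bar u\cdot\nabla=\zeta(\varrho)\p_\varphi$, not the full gradient. Similarly, for the Biot--Savart term your estimate $\|\BS_{2d}[\omega]\|_{\dot H^1}\les\|\omega\|_{L^2}$ does not pair with an $L^\infty$ bound on $\nabla(\bar\omega-\bar\omega_R)$ to produce an $L^2$ bound in two dimensions.

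The paper resolves this by separation of variables in the angular direction, which is the missing idea. Since the background is radial, each Fourier subspace $U_k=\{g(\varrho)e^{imk\varphi}\}$ is invariant under both $\A$ and $\A_R$, and on $U_k$ the drift $\bar u\cdot\nabla$ acts as multiplication by $imk\,\zeta(\varrho)$, which is \emph{bounded}. Thus the restrictions $\A^{(k)}$ and $\A_R^{(k)}$ are bounded operators on $U_k$, and the drift part of their difference is governed by $\|(1-\phi_R)\zeta\|_{L^\infty}\to 0$ (using the decay of $\zeta$), while the Biot--Savart part uses a mode-dependent bound $\|f\|_{L^\infty(\varrho\,d\varrho)}\les_{mk}\|g\|_{L^2(\varrho\,d\varrho)}$ on the stream function together with $\|\varrho^{-1}(\bar\omega_R'-\bar\omega')\|_{L^2(\varrho\,d\varrho)}\to 0$. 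Any unstable eigenfunction of $\A$ can be taken to lie in a single $U_{k_0}$, so one only needs $\A_R^{(k_0)}\to\A^{(k_0)}$ in operator norm for that fixed $k_0$; your Riesz projection argument then goes through verbatim.
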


\begin{proof}
Let us begin by observing (separation of variables) that the subspaces $U_k$ defined in \eqref{eq: L2 decomp} are invariant under the operators $\A$ and $\A_R$. Moreover, the restrictions
\begin{equation}
    \A^{(k)} = \A|_{U_k} \, , \qquad
    \A^{(k)}_R = \A_R|_{U_k} \, ,
\end{equation}
are continuous. 
% For $U_k := \{ \omega(x) = g(\varrho) e^{ikm\varphi} \, : \, g\in L^2(\R_+, rdr) \}$, where $k \in \Z$, we decompose $L^2_m = \oplus_{k \in \Z} U_k$. The closed subspaces $U_k$ are invariant under the operators $\A$ and $\A_R$, and we define $\A^{(k)} = \A|_{U_k}$ and $\A_R^{(k)} = \A_R |_{U_k}$, which are bounded operators $U_k \to U_k$. 
Indeed, for $\omega(x) = g(\varrho) e^{i mk\varphi}$ we have
\begin{equation}
    \A^{(k)} \omega = \left( - \zeta imk g + \frac{1}{\varrho} imk f \bar{\omega}' \right) e^{imk\varphi},
\end{equation}
\begin{equation}
    \A^{(k)}_R \omega = \left( - \phi_R \zeta imk g + \frac{1}{\varrho} imk f \bar{\omega}_R' \right) e^{imk\varphi},
\end{equation}
where 
\begin{equation}
\left( \p_{\varrho\varrho}  + \frac{1}{\varrho} \p_{\varrho}  - \frac{(mk)^2}{\varrho^2} \right) f = g \, .
\end{equation}
By employing \cite[Lemma 4.12]{OurLectureNotes}, one can see that
\begin{equation}\label{zzz1}
    \| f \|_{L^\infty(\varrho \, d\varrho)} \les_{mk} \| g \|_{L^2(\varrho \, d\varrho)} \les \| \omega \|_{L^2_m} \, .
\end{equation}

% with stream function $\psi(x) = f(\varrho) e^{imk\varphi}$, the operator $\A^{(k)}$ is given by
% \begin{equation}
%     \A^{(k)} \omega = \left( - \bar{u}^\varphi imk g + \frac{1}{\varrho} imk f \bar{\omega}' \right) e^{in\varphi},
% \end{equation}

We now demonstrate that
\begin{equation}
    \label{eq:akconverges}
    \A^{(k)}_R \overset{R \to +\infty}{\longrightarrow} \A^{(k)}
\end{equation}
in operator norm. 
Let $n := km \neq 0$, since the operators $\A^{(0)}$ and $\A^{(0)}_R$ are zero. For $\omega(x) = g(\varrho) e^{in\varphi}$ and $f(\varrho)$ as above, the difference between $\A^{(k)}_R$ and $\A^{(k)}$ is
\begin{equation}
    \left( \A^{(k)}_R - \A^{(k)} \right) \omega = \left( - \zeta (\phi_R - 1) in g + \frac{1}{\varrho}  in f (\bar{\omega}_R' - \bar{\omega}') \right) e^{in\varphi} \, .
\end{equation}
% where
% \begin{equation}
% \left( \p_{\varrho\varrho}  + \frac{1}{\varrho} \p_{\varrho}  - \frac{n^2}{\varrho^2} \right) f = g.
% \end{equation}
% We have $\| g \|_{L^2(\varrho \, d\varrho)} \les \| \omega \|_{L^2_m}$ and, by the Biot-Savart law above\dacomment{more details?},
% \begin{equation}
%     \| f \|_{L^\infty(\varrho \, d\varrho)} \les_n \| g \|_{L^2(\varrho \, d\varrho)}.
% \end{equation}
Since
\begin{equation}
    \| \bar{u}^\varphi (\phi_R - 1 ) \|_{L^\infty(\varrho \, d\varrho)} + \| \frac{1}{\varrho} (\bar{\omega}_R' - \bar{\omega}') \|_{L^2(\varrho \, d\varrho)} \overset{R \to +\infty}{\longrightarrow} 0 \, 
\end{equation}
due to the cutoff $\phi_R$ and the decay properties of $\bar{u}$ and $\bar{\omega}$, the claimed conclusion~\eqref{eq:akconverges} follows from \eqref{zzz1} and H{\"o}lder's inequality. 

The unstable eigenvalue $\lambda_\infty$ can be associated to a non-trivial eigenfunction $\eta$ belonging to $U_{k_0}$ for some $k_0 \in \Z$. Since $\A^{(0)}$ is trivial, we must have that $k_0 \neq 0$. 

Let $K$ be a compact subset of $\rho(\A) \cap \{ \Re \lambda > 0 \}$. For all $\lambda \in K$, we have
\begin{equation}
\begin{aligned}
            \lambda - \A^{(k)}_R &= \lambda - \A^{(k)} + (\A^{(k)} - \A^{(k)}_R) \\
            &= (\lambda - \A^{(k)}) [I + R(\lambda,\A^{(k)}) (\A^{(k)} - \A^{(k)}_R) ].
\end{aligned}
\end{equation}
Due to~\eqref{eq:akconverges}, there exists $R_0 > 0$ (which depends on $K$) such that, for all $R \geq R_0$, $\lambda - \A^{(k)}_R$ is invertible, its inverse is
\begin{equation}
    R(\lambda,\A^{(k)}_R) = [I + R(\lambda,\A^{(k)}) (\A^{(k)} - \A^{(k)}_R) ]^{-1} R(\lambda,\A^{(k)}),
\end{equation}
and
\begin{equation}
    R(\lambda, \A^{(k)}_R) \overset{R \to +\infty}{\longrightarrow} R(\lambda,\A^{(k)})
\end{equation}
in operator norm, uniformly on $K$.

Let $\vec{c} \subset \rho(\A^{(k_0)}) \cap \{ \Re \lambda > 0 \}$ be a counterclockwise, smooth, simple, closed curve containing $\lambda_\infty$ in its interior and no other element of $\sigma(\A^{(k_0)})$. Let $K = \vec{c}$. We define the spectral projection $\Pr_R^{(k_0)}$ onto $\lambda_\infty$ according to
\begin{equation}
    \Pr_R^{(k_0)}  = \frac{1}{2\pi i} \int_{\vec{c}}  R(\lambda,\A^{(k)}_R) \, d\lambda,
\end{equation}
and we define $\Pr^{(k_0)}$ similarly. The projection $\Pr^{(k_0)}$ is non-trivial. Therefore, since $\Pr^{(k_0)}_R \to \Pr^{(k_0)}$ in operator norm as $R \to +\infty$, we must have that for all $R$ sufficiently large, $\Pr^{(k_0)}_R$ is also non-trivial. Therefore, $\A^{(k_0)}_R$ must have an unstable eigenvalue, and so must $\A_R$. This completes the proof.
\end{proof}

%\begin{remark}[Asymptotic expansion]
%It should be possible to write the series expansion for the new eigenfunction, actually, but it is nice to introduce the  soft methods in the proof, since we will use them later.\dacomment{clarify}
%\end{remark}

%\footnote{EB: What about scaling the space and taking $\bar R = 1$?}

For $\varepsilon = \Re \lambda_\infty / 2$, we choose $\bar{R} \geq R_0$ and define the truncated vortex $\tilde{u} = \bar{u}_{\bar{R}}$ and $\tilde{\omega} = \curl \tilde{u}$. Let us introduce the weighted space $L^2_\gamma = L^2(\R^2; \gamma \, dr \, dz)$, without imposing $m$-fold symmetry, where $\gamma \geq 0$ is a smooth weight satisfying
\begin{equation}\label{eq: gamma}
    \gamma \equiv 1 \text{ on } B_{\bar{R}}, \quad \la r, z \ra^{100} \les |\gamma| \les \la r, z \ra^{100} \text{ on } \R^2 \, ,
\end{equation}
where $\la \gamma,z \ra^2 := \gamma^2 + z^2 + 1$.
We now consider the operator $\L_\infty \: D(\L_\infty)\subset  L^2_\gamma \to L^2_\gamma$ defined by
\begin{equation}
    - \L_\infty : = \tilde u \cdot \nabla \omega + u\cdot \nabla \tilde \omega \, ,
\end{equation}
where $u = \BS_{2d}[\omega]$. Notice that $\BS_{2d} : L^2_\gamma \to L^{2,\infty} \cap \dot H^1$ is well defined and continuous since $L^2_\gamma$ embeds into $L^1$. The domain of $\L_\infty$ is
\begin{equation}
    D(\L_\infty) := \{ \omega \in L^2_\gamma : \tilde{u} \cdot \nabla \omega \in L^2(B_{\bar{R}}) \}\, .
\end{equation}

Using that $\supp \tilde{u}, \supp{\tilde{\omega}} \subset B_{\bar{R}}$, it is not difficult to verify that the operator $\omega \mapsto u \cdot \nabla \tilde{\omega}$ is compact. Hence, for the same reasons as before, the essential spectrum of $\L_\infty$ is on the imaginary axis, and the remainder of the spectrum is discrete, consisting of countably many eigenvalues of finite algebraic multiplicity. 

\begin{corollary}[Instability in weighted space]
\label{cor:instabilityinweighted}
The operator $\L_\infty$ has an unstable eigenvalue. % $\lambda_{\bar{R}}$.
\end{corollary}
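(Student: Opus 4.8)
The plan is to transfer the instability from $L^2_m$ to $L^2_\gamma$ by producing, essentially for free, an explicit unstable eigenfunction of $\L_\infty$. Recall that $\bar R$ was fixed (taking $\varepsilon = \Re\lambda_\infty/2$) so that, by Proposition~\ref{pro:truncatedunstablevortex}, the operator $\A_{\bar R} \colon D(\A_{\bar R}) \subset L^2_m \to L^2_m$ has an eigenvalue $\lambda = \lambda_{\bar R}$ with $\Re\lambda > \Re\lambda_\infty/2 > 0$. By the separation of variables recorded in the proof of Proposition~\ref{pro:truncatedunstablevortex}, the corresponding eigenfunction may be chosen in a single subspace $U_{k_0}$ of the decomposition~\eqref{eq: L2 decomp} with $k_0 \neq 0$; write it $\eta(x) = g(\varrho)e^{ik_0 m\varphi}$, so that $-\lambda\eta = \tilde u\cdot\nabla\eta + u\cdot\nabla\tilde\omega$ with $u = \BS_{2d}[\eta]$.

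The first step is to observe that $\supp\eta \subseteq \overline{B_{\bar R}}$. Since $\tilde u$ and $\tilde\omega$ are smooth and supported in $\overline{B_{\bar R}}$, and since $u \in \dot H^1 \subset L^2_{\rm loc}$, the right-hand side of the eigenvalue relation is a distribution supported in $\supp\tilde u \cup \supp\nabla\tilde\omega \subseteq \overline{B_{\bar R}}$; as $\lambda \neq 0$, this forces $\eta \equiv 0$ outside $\overline{B_{\bar R}}$. Hence $\eta$ is a compactly supported $L^2(\R^2)$ function, so it lies in $L^2_\gamma$ (recall $\gamma \equiv 1$ on $B_{\bar R}$ from~\eqref{eq: gamma}), and, rearranging, $\tilde u\cdot\nabla\eta = -\lambda\eta - u\cdot\nabla\tilde\omega \in L^2(\R^2)$, so $\eta \in D(\L_\infty)$.

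The point that needs care --- and the only real obstacle --- is that the velocity field entering $\A_{\bar R}$ coincides with the one entering $\L_\infty$ when both are evaluated at $\eta$. The Biot--Savart law used to define $\A_{\bar R}$ is the one adapted to the $m$-fold symmetric class (normalized by vanishing mean velocity on balls centered at the origin), while that in $\L_\infty$ is the genuine convolution $K_2 \ast \cdot$, which is licit because $L^2_\gamma \hookrightarrow L^1$. For our $\eta$ these agree: $K_2 \ast \eta$ is divergence-free with $\curl(K_2 \ast \eta) = \eta$, belongs to $\dot H^1$ by Calder\'on--Zygmund theory, and is $m$-fold equivariant because $K_2(R_{2\pi/m}x) = R_{2\pi/m}K_2(x)$; since an $m$-fold equivariant vector field with $m \geq 2$ has vanishing mean on every centered ball, $K_2 \ast \eta$ satisfies the characterizing properties of the $L^2_m$ Biot--Savart transform of $\eta$, and the two coincide. (The single-mode structure makes the decay explicit: $\eta$ has zero mean, so $K_2 \ast \eta = O(|x|^{-|k_0 m|-1})$ at infinity, placing $u \in L^{2,\infty}\cap\dot H^1$ --- though only membership in $\dot H^1$ is used here.) Granting this identification, $-\L_\infty\eta = \tilde u\cdot\nabla\eta + u\cdot\nabla\tilde\omega = -\lambda\eta$ in $L^2_\gamma$, so $\lambda$ is an eigenvalue of $\L_\infty$ with $\Re\lambda > 0$ and nonzero eigenfunction $\eta$, as claimed. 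An essentially equivalent formulation, which one could use to streamline the write-up, is to note that $\{\omega \in L^2_\gamma : \supp\omega \subseteq \overline{B_{\bar R}}\}$ is invariant under $\L_\infty$ (the nonlocal $u$ is always multiplied there by the compactly supported $\nabla\tilde\omega$), and that on this subspace $\L_\infty$ restricted to $m$-fold symmetric elements agrees with $\A_{\bar R}$ on the corresponding subspace of $L^2_m$.
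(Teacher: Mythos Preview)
Your proof is correct and follows essentially the same approach as the paper's: take an unstable eigenfunction $\eta$ of $\A_{\bar R}$, observe that the eigenvalue relation forces $\supp\eta \subset \overline{B_{\bar R}}$ because $\tilde u$ and $\tilde\omega$ are supported there, and conclude that $\eta$ is also an eigenfunction of $\L_\infty$. The paper's proof is three sentences and omits the point you address most carefully, namely that the Biot--Savart law normalized via $m$-fold symmetry on $L^2_m$ and the convolution Biot--Savart law on $L^2_\gamma$ agree on $\eta$; your justification of this identification is correct and fills a detail the paper leaves implicit.
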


\begin{proof}
Let $\eta$ be a non-trivial unstable eigenfunction of $\A_{\bar{R}}$. Since $\supp (\A_{\bar{R}} \eta) \subset B_{\bar{R}}$ and $\lambda \eta + \A_{\bar{R}} \eta = 0$, it follows that $\eta$ is supported in $B_{\bar R}$. In particular $\eta \in L_\gamma$ and $\lambda \eta + \L_\infty \eta = 0$.
\end{proof}

% \dacomment{combine with corollary}
% \begin{lemma}[Eigenfunction bootstrapping]\label{lemma:bootstrap}
% Let $\eta$ be a non-trivial unstable eigenfunction of $\A_{\bar{R}}$. Then $\eta$ is supported in $B_{\bar{R}}$.
% %\begin{equation}
%     %|\eta| \leq C(\eta) \la x \ra^{-m+O(1)}.
% %\end{equation}
% %\da{Determine what this $O(1)$ is}
% \end{lemma}
% \begin{proof}
% Since $\supp (\A_{\bar{R}} \eta) \subset B_{\bar{R}}$, the compact support property follows from the eigenfunction equation satisfied by $\eta$: $\lambda \eta + \A_{\bar{R}} \eta = 0$.
% %To see smoothness, we consider the PDE satisfied by the stream function $\psi(x) = f(\varrho) e^{inx}$ associated to $\eta$, where $\lambda = inc$ and $\Re c > 0$:
% %\begin{equation}
%     %\left( c - \bar{u}^\varphi  \right)\left( \p_{\varrho}^2 + \frac{1}{\varrho} \p_{\varrho} - n^2 \right) f + \frac{1}{\varrho} \bar{\omega}' f = 0.
% %\end{equation}
% %This is Rayleigh's stability equation.
% \end{proof}

\subsection{Axisymmetric instability without swirl}

We now pass to the axisymmetric situation. We artificially place the two-dimensional vortex $\tilde{u}(x,y) = \tilde{u}^x e_x + \tilde{u}^y e_y$ into the axisymmetric-without-swirl coordinates $(r',z) \in \R_+ \times \R$. More often, we work with the variable $r = r' - \ell \in \R_{> -\ell}$, where $\ell \geq 2\bar{R}$. We define %(slightly abusing notation)
\begin{equation}
\tilde{u}(r,z) = \tilde{u}^r e_r + \tilde{u}^z e_z,
\end{equation}
where, as functions on $\R^2$, $\tilde{u}^r(r,z) = \tilde{u}^x(r,z)$ and $\tilde{u}^z(r,z) = \tilde{u}^y(r,z)$.
So far, $\tilde{u}$ does not correspond to a \emph{divergence-free} axisymmetric-no-swirl vector field, since the physical divergence of $\tilde{u}$ is
\begin{equation}
    \div_\ell \tilde{u} := \left( \p_r + \frac{1}{r+\ell} \right) \tilde{u}^r + \p_z \tilde{u}^z = (r+\ell)^{-1} (\p_r,\p_z) \cdot \left[ (r+\ell) \tilde{u} \right].
\end{equation}
Since $(\p_r,\p_z) \cdot \tilde{u} = 0$, there is an error of $(r+\ell)^{-1} \tilde{u}^r$, which we correct below:

\begin{lemma}[Correcting the divergence]\label{lemma:div}
For each $\ell \geq 2\bar{R}$, there exists $v_\ell \in C^\infty_0(B_{\bar{R}};\R^2)$ satisfying
\begin{equation}
    \label{eq:equationsatisfiedbyvell}
    \div_{\ell} \left( \tilde{u} + v_\ell \right) = 0
\end{equation}
and
\begin{equation}
    v_\ell \overset{\ell \to +\infty}{\longrightarrow} 0 \text{ in } C^k(B_{\bar{R}}) \text{ for all } k \geq 0.
\end{equation}
\end{lemma}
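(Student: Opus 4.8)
The plan is to solve the divergence equation $\div_\ell w = -(r+\ell)^{-1}\tilde u^r$ explicitly, exploiting the fact that $\tilde u$ is supported in $B_{\bar R}$ and that $r+\ell \geq \ell - \bar R \geq \bar R > 0$ there, so no singularity of the weight $1/(r+\ell)$ is encountered. First I would rewrite the target equation in the conservative form
\begin{equation}
(\p_r,\p_z)\cdot\big[(r+\ell)(\tilde u + v_\ell)\big] = 0,
\end{equation}
so that, setting $w_\ell := (r+\ell) v_\ell$, it suffices to find $w_\ell \in C^\infty_0(B_{\bar R};\R^2)$ with $(\p_r,\p_z)\cdot w_\ell = -(\p_r,\p_z)\cdot\big[(r+\ell)\tilde u\big] = -\tilde u^r$, where the last identity uses $(\p_r,\p_z)\cdot\tilde u = 0$. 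Note that $\int_{\R^2} \tilde u^r \, dr\, dz = \int \tilde u^x = 0$ since $\tilde u = \nabla^\perp \tilde\psi$ has zero mean (its components are derivatives of a compactly supported stream function), which is exactly the compatibility condition needed to solve $\div w_\ell = -\tilde u^r$ with $w_\ell$ compactly supported in $B_{\bar R}$.

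The construction of $w_\ell$ is then a standard Bogovskii-type solution operator: fix once and for all a bounded linear right inverse $\mathcal{B}: \{g \in C^\infty_0(B_{\bar R}) : \int g = 0\} \to C^\infty_0(B_{\bar R};\R^2)$ of the divergence, continuous from $H^k$ to $H^{k+1}$ (equivalently from $C^k$ to $C^{k+1}$ on the fixed ball $B_{\bar R}$) for every $k$; this operator does not depend on $\ell$. Set $w_\ell := \mathcal{B}(-\tilde u^r)$, which is actually independent of $\ell$, and then define $v_\ell := (r+\ell)^{-1} w_\ell$. Since $w_\ell$ is a fixed element of $C^\infty_0(B_{\bar R};\R^2)$ and $(r+\ell)^{-1}$ is smooth and bounded (with all derivatives bounded) on $B_{\bar R}$ uniformly for $\ell \geq 2\bar R$, we get $v_\ell \in C^\infty_0(B_{\bar R};\R^2)$, and
\begin{equation}
\div_\ell(\tilde u + v_\ell) = (r+\ell)^{-1}(\p_r,\p_z)\cdot\big[(r+\ell)\tilde u + w_\ell\big] = (r+\ell)^{-1}\big(\tilde u^r - \tilde u^r\big) = 0,
\end{equation}
which is \eqref{eq:equationsatisfiedbyvell}.

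For the convergence $v_\ell \to 0$ in $C^k(B_{\bar R})$: on $B_{\bar R}$ we have $r + \ell \geq \ell - \bar R \to +\infty$, and by the Leibniz rule any derivative $\p^\alpha v_\ell = \p^\alpha\big((r+\ell)^{-1} w_\ell\big)$ is a finite sum of terms $\p^\beta\big((r+\ell)^{-1}\big)\,\p^{\alpha-\beta} w_\ell$; each factor $\p^\beta\big((r+\ell)^{-1}\big)$ is $O(\ell^{-1-|\beta|}) = O(\ell^{-1})$ uniformly on $B_{\bar R}$, while $\p^{\alpha-\beta} w_\ell$ is bounded independently of $\ell$. Hence $\|v_\ell\|_{C^k(B_{\bar R})} \lesssim_k \ell^{-1} \to 0$, which is the claimed decay. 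The main (very mild) obstacle is simply verifying the zero-mean condition $\int \tilde u^r = 0$ so that the Bogovskii operator applies — this follows because $\tilde u$ is a truncation of a two-dimensional Biot–Savart velocity $\bar u = \nabla^\perp\bar\psi$ with compactly supported, smooth stream function, and genuinely compactly supported divergence-free vector fields have vanishing component means; once that is in hand, everything else is routine. (One could alternatively avoid invoking Bogovskii and write $w_\ell$ by an explicit potential-theoretic formula, but the abstract right inverse is cleanest and makes the $\ell$-uniformity transparent since $w_\ell$ is genuinely $\ell$-independent.)
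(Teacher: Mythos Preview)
Your proof is correct and follows essentially the same approach as the paper: rewrite the equation as $(\p_r,\p_z)\cdot[(r+\ell)v_\ell] = -\tilde u^r$, verify the zero-mean compatibility condition, apply Bogovskii's operator to obtain an $\ell$-independent solution $(r+\ell)v_\ell \in C^\infty_0(B_{\bar R})$, and divide by $r+\ell$. The only cosmetic difference is that the paper justifies $\int \tilde u^r = 0$ via the odd symmetry $\tilde u(-x) = -\tilde u(x)$ of the radial vortex, whereas you invoke the stream function; note your parenthetical remark that $\bar u = \nabla^\perp\bar\psi$ has \emph{compactly supported} stream function is not quite right (it is $\tilde u$, not $\bar u$, that is compactly supported), but your actual argument only uses this for $\tilde u$ and is sound.
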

\begin{proof}
We multiply~\eqref{eq:equationsatisfiedbyvell} through by $r+\ell$ to find that the correction $v_\ell$ must solve
\begin{equation}\label{eqn:divvell}
    (\p_r,\p_z) \cdot [ (r+\ell) v_\ell ] = - \tilde{u}^r.
\end{equation}
The right-hand side belongs to $C^\infty_0(B_{\bar{R}})$ and has zero mean due to the radial symmetry of $\tilde{u}$: $\int_{\R^2} \tilde{u}(x,y) \, dx \, dy = 0$. Bogovskii's operator~\cite{bogovskii} (see also Galdi's book~\cite[Section III.3]{galdi}) gives us a solution $(r+\ell) v_\ell \in C^\infty_0(B_{\bar{R}};\R^2)$ which is independent of~$\ell$. The function $v_\ell$ and its quantitative bounds are recovered after dividing by $r+\ell$. \end{proof}

We define the corrected background as
\begin{equation}
    \tilde{u}_\ell = \tilde{u} + v_\ell, \quad \tilde{\omega}_\ell = \curl_{\ell} \tilde{u} := -\p_z \tilde{u}^r + \p_r \tilde{u}^z
    \, .
\end{equation}
Here, we identify the $3d$ curl of $\tilde u\in C^\infty(\R^3; \R^3)$ with a scalar function $\curl_{\ell} \tilde{u}$. This is possible because, as $\tilde u$ is axisymmetric without swirl, it holds
\begin{equation}
     \curl_{3d} \tilde u = - \curl_\ell \tilde u \, e_\theta \, . 
\end{equation}
Notice the sign convention for the swirl component we introduced above~\eqref{eq:eulerintro}.

Consider the linearized axisymmetric-no-swirl Euler operator around the background $\tilde{u}_\ell$:
\begin{equation}
    -\L_\ell \omega := \tilde{u}_\ell \cdot \nabla \omega + u \cdot \nabla \tilde{\omega}_\ell - \frac{\tilde{u}_\ell^r}{r+\ell} \omega - \frac{u^r}{r+\ell} \tilde{\omega}_\ell, 
    \quad u = \BS_\ell[\omega],
\end{equation}
where here and in this subsection $\nabla = (\p_r,\p_z)$ and $u = \BS_\ell[\omega]$ is defined by
\begin{equation}
    \p_r^2 \psi + \frac{1}{r+\ell} \p_r \psi - \frac{1}{(r+\ell)^2} \psi + \p_z^2 \psi = \omega \text{ in } \R^2_{> -\ell}, \quad \p_n \psi|_{r = -\ell} = 0
\end{equation}
\begin{equation}
    u = -\p_z \psi \, e_r + \left( \p_r + \frac{1}{r+\ell} \right) \psi \, e_z.
\end{equation}
We emphasize here that $\omega$ is regarded as minus the swirl component of the vorticity and $\supp \tilde{u}_\ell,\, \supp \tilde{\omega}_\ell \subset B_{\bar{R}}$ and

\subsubsection{Functional analytic set-up}
We consider the space
\begin{equation}
    L^2_{\gamma,\ell} := L^2(\R^2_{r > -\ell} ; \gamma \, dr \, dz) \, ,
\end{equation}
where $\gamma$ is defined in \eqref{eq: gamma}.
We define the operator $\L_\ell \: D(\L_\ell) \subset L^2_{\gamma,\ell} \to L^2_{\gamma,\ell}$ with domain
\begin{equation}
    D(\L_\ell) := \{ \omega \in L^2_{\gamma,\ell} : \tilde{u}_\ell \cdot \nabla \omega \in L^2(B_{\bar{R}}) \}
\end{equation}
according to
% \dacomment{To be consistent with the next section, I want to define $\M_\ell$ and the others as minus what they are... Then the resolvents would be $R(\lambda,\M_\ell)$, etc. I didn't change this yet, but I want to.}
\begin{equation}
     \L_\ell = \M_\ell + \K_\ell + \S_\ell \, ,
\end{equation}
where
\begin{equation}
    -\M_\ell \omega := \tilde{u}_\ell \cdot \nabla \omega + \frac{1}{2} (\div_{2d} v_\ell) \omega \, ,
\end{equation}
\begin{equation}
    -\K_\ell \omega := \BS_\ell[\omega] \cdot \nabla \tilde{\omega}_\ell \, ,
\end{equation}
 \begin{equation}
        -\S_\ell \omega := - (r+\ell)^{-1} \BS_{\ell}[\omega]^r \tilde{\omega}_\ell - (r+\ell)^{-1} \tilde{u}^r_\ell \omega - \frac{1}{2} (\div_{2d} v_\ell) \omega \, .
\end{equation}
It is not hard to see (Lemma~ \ref{lem:basicproperties} below) the mapping properties
\begin{equation}
    \M_\ell : D(\L_\ell) \subset L^2_{\gamma,\ell} \to L^2(B_{\bar{R}}) \subset L^2_{\gamma,\ell} \, ,
\end{equation}
and 
\begin{equation}
    \K_\ell\, , \S_\ell : L^2_{\gamma,\ell} \to L^2(B_{\bar{R}}) \subset L^2_{\gamma,\ell} \, .
    \end{equation}
%     \begin{equation}
%     \K_\ell \omega = BS_\ell[\omega] \cdot \nabla \tilde{\omega}_\ell
% \end{equation}
% \begin{equation}
%     \S_\ell : L^2_{\gamma,\ell} \to L^2(B_{\bar{R}})
%     \end{equation}
%     \begin{equation}
%         \begin{aligned}
%         \S_\ell \omega &= - (r+\ell)^{-1} BS_{\ell}[\omega] \tilde{\omega}_\ell - (r+\ell)^{-1} \tilde{u}^r_\ell \omega + \frac{1}{2} (\div_{2d} v_\ell) \omega \\
%     &= - (r+\ell)^{-1} BS_{\ell}[\omega] \tilde{\omega}_\ell +\frac{3}{2} (\div_{2d} v_\ell) \omega
%     \end{aligned}
% \end{equation}
The naming convention is as follows: $\M_\ell$ is the `main term', which is also skew-adjoint with the above choice of domain; $\K_\ell$ is `compact'; and $\S_\ell$ is `small'. 
To show that $\M_\ell$ is~skew-adjoint we use that $\gamma = 1$ on the support of $\tilde u_\ell$ and $\tilde \omega_\ell$.

\begin{lemma}[Basic properties]
\label{lem:basicproperties}
The operators $\K_\ell, \S_\ell : L^2_{\gamma, \ell}\to L^2_{\gamma,\ell}$ are bounded.
Moreover, $\K_\ell$ is compact, and 
\begin{equation}
\label{eqn:term3}
\| \S_\ell \|_{L^2_{\gamma,\ell} \to L^2_\gamma}  \to 0 \text{ as } \ell \to +\infty  \, .
\end{equation}
\end{lemma}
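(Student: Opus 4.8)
The plan is to establish the three assertions in order: boundedness of $\K_\ell$ and $\S_\ell$, compactness of $\K_\ell$, and the small-$\ell$ decay \eqref{eqn:term3}. The common tool is that the Biot--Savart operator $\BS_\ell$ gains regularity and is compatible with the weighted space. First I would record the mapping property $\BS_\ell : L^2_{\gamma,\ell} \to W^{2,2}_{\rm loc}$ with $\|\BS_\ell[\omega]\|_{H^2(B_{2\bar R})} \lesssim \|\omega\|_{L^2_{\gamma,\ell}}$, uniformly in $\ell \geq 2\bar R$: indeed $L^2_{\gamma,\ell} \hookrightarrow L^1 \cap L^2$ near $r=-\ell$, the operator coefficients $\tfrac{1}{r+\ell}$, $\tfrac{1}{(r+\ell)^2}$ are smooth and bounded on $B_{2\bar R}$ (since $r+\ell \geq \ell - 2\bar R \geq \bar R$ there), and elliptic regularity for the Neumann-type problem gives interior $H^2$ control. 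Then, since $\tilde\omega_\ell$ and $\tilde u_\ell^r$ are supported in $B_{\bar R}$ with $C^k$ norms bounded uniformly in $\ell$ (using $\supp \tilde u_\ell, \supp\tilde\omega_\ell \subset B_{\bar R}$ and the bound on $v_\ell$ from Lemma~\ref{lemma:div}), the formulas
\[
-\K_\ell\omega = \BS_\ell[\omega]\cdot\nabla\tilde\omega_\ell, \qquad
-\S_\ell\omega = -(r+\ell)^{-1}\BS_\ell[\omega]^r\tilde\omega_\ell - (r+\ell)^{-1}\tilde u^r_\ell\omega - \tfrac12(\div_{2d}v_\ell)\omega
\]
immediately give $\K_\ell\omega, \S_\ell\omega \in L^2(B_{\bar R})$ with norm $\lesssim \|\omega\|_{L^2_{\gamma,\ell}}$; since $\gamma \equiv 1$ on $B_{\bar R}$, this is the same as the $L^2_{\gamma,\ell}$ norm, so both operators are bounded into $L^2(B_{\bar R}) \subset L^2_{\gamma,\ell}$.

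For compactness of $\K_\ell$: by the above, $\K_\ell$ factors as $L^2_{\gamma,\ell} \xrightarrow{\BS_\ell} H^2(B_{2\bar R};\R^2) \xrightarrow{\,\cdot\,\nabla\tilde\omega_\ell} L^2(B_{\bar R})$. The first map is bounded and the second involves multiplication by the smooth compactly supported function $\nabla\tilde\omega_\ell$; since $H^2(B_{2\bar R}) \hookrightarrow L^2(B_{2\bar R})$ is compact by Rellich--Kondrachov, the composition is compact as an operator into $L^2(B_{\bar R}) \subset L^2_{\gamma,\ell}$. (The operator $\omega \mapsto \tilde u_\ell\cdot\nabla\omega$ entering $\M_\ell$ is of course not compact; only $\K_\ell$ is.)

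For \eqref{eqn:term3}, the point is that each of the three terms in $-\S_\ell\omega$ carries a factor that is $O(1/\ell)$ in the relevant norm on $B_{\bar R}$. On $B_{\bar R}$ we have $(r+\ell)^{-1} \leq (\ell-\bar R)^{-1} \lesssim \ell^{-1}$; combined with the uniform $H^2$ bound on $\BS_\ell[\omega]$ and the uniform bounds on $\tilde\omega_\ell$, $\tilde u^r_\ell$, the first two terms are $\lesssim \ell^{-1}\|\omega\|_{L^2_{\gamma,\ell}}$. The third term is bounded by $\|\div_{2d}v_\ell\|_{L^\infty(B_{\bar R})}\|\omega\|_{L^2(B_{\bar R})}$, and $\|\div_{2d}v_\ell\|_{L^\infty(B_{\bar R})} \to 0$ as $\ell\to+\infty$ by Lemma~\ref{lemma:div} (indeed $v_\ell \to 0$ in $C^1$). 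Since the output is supported in $B_{\bar R}$ where $\gamma \equiv 1$, the $L^2_\gamma$ norm of $\S_\ell\omega$ equals its $L^2(B_{\bar R})$ norm, and summing the three bounds gives $\|\S_\ell\|_{L^2_{\gamma,\ell}\to L^2_\gamma} \lesssim \ell^{-1} + \|\div_{2d}v_\ell\|_{L^\infty(B_{\bar R})} \to 0$.

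I expect the only delicate point to be the \emph{uniform-in-$\ell$} $H^2$ estimate for $\BS_\ell$ near the support $B_{\bar R}$: one must check that the elliptic problem for $\psi$ in $\R^2_{r>-\ell}$ with the Neumann condition at $r=-\ell$ produces interior estimates on $B_{2\bar R}$ with constants independent of $\ell$. This follows because on $B_{2\bar R}$ the operator is uniformly elliptic with smooth coefficients bounded independently of $\ell \geq 2\bar R$, and the solution $\psi$ is controlled in $L^2_{\rm loc}$ by $\|\omega\|_{L^2_{\gamma,\ell}}$ via the $\dot H^1$ energy estimate together with the embedding $L^2_{\gamma,\ell}\hookrightarrow L^1$; a standard Caccioppoli/bootstrap argument on $B_{2\bar R} \Subset B_{3\bar R} \subset \R^2_{r>-\ell}$ then upgrades to $H^2$. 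Everything else is routine once the Biot--Savart mapping property is in hand.
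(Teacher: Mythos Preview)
Your overall architecture is sound and matches the paper's: both operators have range in $L^2(B_{\bar R})$ because $\tilde\omega_\ell,\tilde u_\ell$ are supported there, compactness of $\K_\ell$ comes from a local $H^1$ bound on $\BS_\ell[\omega]$ plus Rellich, and $\S_\ell\to 0$ because each term carries an explicit factor that vanishes as $\ell\to\infty$. Two points deserve attention.

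First, a minor slip: $\BS_\ell[\omega]$ is the \emph{velocity} $u=(-\p_z\psi,(\p_r+(r+\ell)^{-1})\psi)$, which is one derivative of the stream function. From $\omega\in L^2$ one gets $\psi\in H^2_{\rm loc}$ and hence only $u\in H^1_{\rm loc}$, not $H^2_{\rm loc}$. This is harmless, since $H^1(B_{2\bar R})\hookrightarrow L^2(B_{2\bar R})$ compactly is all you use.

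Second, and more substantively: the crux of your proposal is the \emph{uniform-in-$\ell$} bound $\|\BS_\ell[\omega]\|_{H^1(B_{2\bar R})}\lesssim\|\omega\|_{L^2_{\gamma,\ell}}$, which you flag as ``the only delicate point'' but do not actually establish. Your suggested route (global $\dot H^1$ energy estimate, $L^2_{\gamma,\ell}\hookrightarrow L^1$, then Caccioppoli) does not obviously give an $\ell$-independent constant: the natural $3$d energy involves the measure $(r+\ell)\,dr\,dz$, and one only has $\|\omega\|_{L^2((r+\ell)dr\,dz)}\lesssim\ell^{1/2}\|\omega\|_{L^2_\gamma}$ (sharp, since $\gamma\equiv 1$ near the origin while $r+\ell\sim\ell$ there). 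For compactness this is irrelevant, since $\ell$ is fixed; but for \eqref{eqn:term3} you use this bound to claim the Biot--Savart term in $\S_\ell$ is $O(\ell^{-1})$, and that rate is unjustified.

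The paper sidesteps the uniformity question entirely. It invokes the $3$d Sobolev estimate $\|\BS_\ell[\omega]\|_{L^6((r+\ell)dr\,dz)}\lesssim\|\omega\|_{L^2((r+\ell)dr\,dz)}\lesssim\ell^{1/2}\|\omega\|_{L^2_\gamma}$ directly, accepts the resulting growth $\|\BS_\ell[\omega]\|_{L^6(B_{\bar R})}\lesssim\ell^{1/3}\|\omega\|_{L^2_\gamma}$, and observes that the prefactor $(r+\ell)^{-1}\sim\ell^{-1}$ still beats it. Your argument is easily repaired the same way: replace the asserted uniform bound by this explicit $\ell^{1/3}$ estimate, and the term $(r+\ell)^{-1}\BS_\ell[\omega]^r\tilde\omega_\ell$ is then $O(\ell^{-2/3})$ rather than $O(\ell^{-1})$, which suffices.
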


Since $\bar{R}$ is fixed, it will be convenient to omit the dependence of the implied constants on $\bar{R}$ in the remainder of the paper.

\begin{proof}
Let us begin by proving \eqref{eqn:term3}.
Let $\omega$ be a function supported on ${\{r>-\ell\}}$.
By \eqref{eqn:divvell}, we have that 
$\div_{2d} v_\ell=- (r+\ell)^{-1} v_\ell^r  - (r+\ell)^{-1}  \tilde{u}^r$ and, by the boundedness of $ \tilde{u}^r$ and since $v_\ell^r$ goes to $0$ in $L^\infty$, we deduce that
$\div_{2d} v_\ell $ goes to $0$ in $L^\infty$.
Using also that $\tilde{u}^r_\ell$ is supported in $B_{\bar R}$, we estimate
\begin{equation}\label{eqn:t1}
\begin{split}
\left\|  (r+\ell)^{-1} \tilde{u}^r_\ell \omega +
 \frac{1}{2} (\div_{2d} v_\ell) \omega \right\|_{ L^2_{\gamma,\ell}} 
 & =
 \left\|  (r+\ell)^{-1} \tilde{u}^r_\ell \omega +
 \frac{1}{2} (\div_{2d} v_\ell) \omega \right\|_{ L^2(B_{\bar{R}})} 
 \\ & \le 
 \underbrace{ \left( \frac 1 {\ell-\bar R} \|\tilde{u}^r_\ell\|_{L^\infty} +  \|\div_{2d} v_\ell\|_{L^\infty} \right) }_{\to 0 \text{ as } \ell \to +\infty} \| {\omega} \|_{ L^2_{\gamma,\ell}} \, .
\end{split}
\end{equation}
%and the right-hand side goes to $0$.

The standard estimate for the Biot-Savart kernel in $\R^3$, read on axisymmetric functions, gives us that
\begin{equation}
\label{eqn:BSl-est}
\| \BS_\ell[\omega]\|_{L^{6}((r+\ell)\,dr\,dz)} 
\les \| \omega \|_{L^2((r+\ell)\,dr\,dz)} 
\les \ell^{1/2}  \| \omega \|_{L^2(\gamma\,dr\,dz)} \, .
\end{equation}
Hence, we estimate
\begin{equation}\label{eqn:t2}
\begin{split}
\| (r+\ell)^{-1} \BS_{\ell}[\omega]^r \tilde{\omega}_\ell \|_{ L^2_{\gamma, \ell}} 
& =
\| (r+\ell)^{-1} \BS_{\ell}[\omega]^r \tilde{\omega}_\ell \|_{ L^2(B_{\bar{R}})} 
\\& \les 
\frac 1 {\ell-\bar R} \|  \BS_{\ell}[\omega]\|_{ L^6(B_{\bar{R}})}  \| \tilde{\omega}_\ell \|_{ L^3(B_{\bar{R}})} 
\\& \les
\ell^{-5/6} \| \BS_{\ell}[\omega]\|_{ L^6((r+\ell)\,dr\,dz)}  \| \tilde{\omega}_\ell \|_{ L^3(B_{\bar{R}})} 
\\& \les
\ell^{-1/3} \| \omega \|_{L^2_{\gamma,\ell}} \| \tilde{\omega}_\ell \|_{ L^3(B_{\bar{R}})}  \, .
\end{split}
\end{equation}
Recalling that $\tilde{\omega}_\ell $ is bounded independently on $\ell$, putting together \eqref{eqn:t1}, \eqref{eqn:t2} and \eqref{eqn:BSl-est}, we obtain \eqref{eqn:term3}.

To prove that $\K_\ell$ is compact, it is enough to check that 
\begin{equation}
    \| \BS_\ell[\omega] \|_{L^6(B_{\bar R})} + \| \nabla \BS_\ell[\omega] \|_{L^2(B_{\bar R})}
    \les_{\ell} 1 \, .
\end{equation}
The $\dot H^1$ estimate follows from the fact that $\omega \in L^2$, while the $L^6$ estimate is a consequence of \eqref{eqn:BSl-est}.
%Old comment:
%The third term uses that $BS_\ell[\omega]$ is bounded and the $\ell^{-1}$ sends you to $0$. $BS_\ell$ sends you from $L^2$ to $L^6$ in $(r+\ell) \, dr \, dz$, then $ \tilde \omega_ \ell$ localizes at the origin.
\end{proof}

With these properties in mind, we can say the following: The essential spectrum of $\L_\ell$ is contained in a half-plane $\{ \Re \lambda < o_{\ell \to +\infty}(1) \}$, outside of which the spectrum is discrete, consisting of countably many eigenvalues of finite algebraic multiplicity. We always consider $\ell$ large enough so that the $o(1)$ term above is $\leq 1/2$.

\subsubsection{Main result} 
We are now ready to state our main instability result for the operator $\L_\ell$. The proof is given at the end of this section.

\begin{proposition}[Axisymmetric instability]
    \label{pro:axisymmetricinstab}
Let $\lambda_\infty$ be an unstable eigenvalue of $\L_\infty$. 
For all $\varepsilon \in (0,\Re \lambda_\infty)$, there exists $\ell_0 \geq 2\bar{R}$, depending on $\tilde{u}$, $\varepsilon$, and $\lambda_\infty$, such that for all $\ell \geq \ell_0$ , the operator $\L_\ell$ has an unstable eigenvalue $\lambda_\ell$ with $|\lambda_\ell - \lambda_\infty| < \varepsilon$.
\end{proposition}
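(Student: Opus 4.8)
\emph{Overview and reduction to a fixed space.} The plan is to compare $\L_\ell$ with $\L_\infty$ through their resolvents and to run a Riesz-projection argument, exactly in the spirit of the proof of Proposition~\ref{pro:truncatedunstablevortex}. The first observation is that we may work on a single Hilbert space independent of $\ell$. Since $\M_\ell$ maps $D(\L_\ell)$ into $L^2(B_{\bar R})$ and $\K_\ell,\S_\ell$ map $L^2_{\gamma,\ell}$ into $L^2(B_{\bar R})$, every eigenfunction of $\L_\ell$ associated with a nonzero eigenvalue $\lambda$ satisfies $\omega=\lambda^{-1}\L_\ell\omega\in L^2(B_{\bar R})$ and is thus supported in $\overline{B_{\bar R}}$; the same holds for $\L_\infty$. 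Moreover $\tilde{u}_\ell,\tilde{\omega}_\ell\in C^\infty_0(B_{\bar R})$ and, since $\tilde u$ is two-dimensionally divergence free, $\div_{2d}v_\ell=\div_{2d}\tilde{u}_\ell$, so $-\M_\ell\omega=\tilde{u}_\ell\cdot\nabla\omega+\tfrac12(\div_{2d}\tilde{u}_\ell)\omega$ is the skew-symmetric form of a transport operator whose velocity field vanishes near $\partial B_{\bar R}$; hence $X:=L^2(B_{\bar R})$, embedded isometrically in both $L^2_{\gamma,\ell}$ and $L^2_\gamma$ because $\gamma\equiv 1$ on $B_{\bar R}$, reduces $\M_\ell$ and is invariant under $\L_\ell$ and $\L_\infty$, and $R(\lambda,\M_\ell)$ maps $X$ into $X$ for $\Re\lambda\neq 0$. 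From now on we regard $\L_\ell,\M_\ell,\K_\ell,\S_\ell$ and $\L_\infty=\M_\infty+\K_\infty$, with $-\M_\infty\omega:=\tilde u\cdot\nabla\omega$ and $-\K_\infty\omega:=\BS_{2d}[\omega]\cdot\nabla\tilde\omega$, as operators on $X$; this costs nothing, as the unstable eigenvalues are unchanged, and on $X$ the operator $\M_\ell$ is skew-adjoint, $\K_\ell$ compact (Lemma~\ref{lem:basicproperties}), and $\S_\ell$ of small norm, so for $\ell$ large, depending on $\delta>0$, the spectrum of $\L_\ell$ in $\{\Re\lambda>\delta\}$ is a finite set of eigenvalues.

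\emph{Convergence of the pieces.} As $\ell\to+\infty$ I would establish, on $X$: (i) $\|\S_\ell\|\to 0$, which is \eqref{eqn:term3}; (ii) $\K_\ell\to\K_\infty$ in operator norm; and (iii) $R(\lambda,\M_\ell)\to R(\lambda,\M_\infty)$ strongly, uniformly for $\lambda$ in compact subsets of $\{\Re\lambda\geq\delta\}$. Since $\nabla\tilde\omega_\ell\to\nabla\tilde\omega$ in $C^0(B_{\bar R})$ (Lemma~\ref{lemma:div}), (ii) reduces to the convergence $\BS_\ell\to\BS_{2d}$ as operators from $X$ into $L^q(B_{\bar R})$ for some fixed $q>2$: the Green's function of $\p_r^2+\frac{1}{r+\ell}\p_r-\frac{1}{(r+\ell)^2}+\p_z^2$ on $\{r>-\ell\}$ (with the boundary condition at the far-away line $r=-\ell$), recentered so that the data sits near $r=0$, converges as $\ell\to+\infty$ to the two-dimensional Green's function, and the induced velocity kernels converge locally uniformly off the diagonal with a uniform weakly singular bound $\les|x-y|^{-1}$, whence the operator-norm convergence by dominated convergence. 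For (iii), since the resolvent of a transport operator gains no transversal regularity, I would not argue through the PDE but through the representation $R(\lambda,\M_\ell)=\int_0^\infty e^{-\lambda s}e^{s\M_\ell}\,ds$ together with the locally uniform convergence of the flow of $\tilde{u}_\ell$ to that of $\tilde u$ (from $\tilde{u}_\ell\to\tilde u$ in $C^1_{\loc}$, Lemma~\ref{lemma:div}), which gives $e^{s\M_\ell}w\to e^{s\M_\infty}w$ in $X$ for every $s$ and $w$, hence the claim by dominated convergence in $s$. Combining (i)--(iii) with the compactness of $\K_\infty$ yields $T_\ell(\lambda):=R(\lambda,\M_\ell)(\K_\ell+\S_\ell)\to T_\infty(\lambda):=R(\lambda,\M_\infty)\K_\infty$ in operator norm, uniformly on compact subsets of $\{\Re\lambda\geq\delta\}$: indeed $R(\lambda,\M_\ell)(\K_\ell-\K_\infty)\to 0$ in norm by (ii) and $\|R(\lambda,\M_\ell)\|\leq 1/\delta$; $(R(\lambda,\M_\ell)-R(\lambda,\M_\infty))\K_\infty\to 0$ in norm by (iii) and compactness of $\K_\infty$; and $R(\lambda,\M_\ell)\S_\ell\to 0$ in norm by (i).

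\emph{Contour argument.} Put $\delta:=\tfrac12\Re\lambda_\infty>0$ and choose a counterclockwise circle $\vec c$ centered at $\lambda_\infty$, of radius $<\min(\varepsilon,\tfrac12\Re\lambda_\infty)$, such that $\vec c\subset\rho(\L_\infty)\cap\{\Re\lambda>\delta\}$ and $\vec c$ encloses no point of $\sigma(\L_\infty)$ besides $\lambda_\infty$ (possible since $\Re\lambda_\infty>0$ and, by the localization above, $\lambda_\infty$ is an isolated eigenvalue of $\L_\infty$ on $X$). As $\L_\infty=\M_\infty+\K_\infty$ has no small part, $\lambda-\L_\infty=(\lambda-\M_\infty)(\Id-T_\infty(\lambda))$ for $\lambda\in\vec c$, so $\Id-T_\infty(\lambda)$ is boundedly invertible uniformly on the compact set $\vec c$; by $T_\ell\to T_\infty$ in norm, for $\ell$ large $\Id-T_\ell(\lambda)$ is invertible along $\vec c$ with inverse converging in norm uniformly on $\vec c$, $\vec c\subset\rho(\L_\ell)$, and $R(\lambda,\L_\ell)=[\Id-T_\ell(\lambda)]^{-1}R(\lambda,\M_\ell)$ there. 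Let $\Pr_\ell:=\frac{1}{2\pi i}\int_{\vec c}R(\mu,\L_\ell)\,d\mu$ and define $\Pr_\infty$ analogously. Using the uniform bound on $[\Id-T_\ell(\mu)]^{-1}$, its uniform norm convergence, the bound $\|R(\mu,\M_\ell)\|\leq 1/\delta$, and the uniform strong convergence (iii), one checks that $\Pr_\ell\to\Pr_\infty$ \emph{strongly} on $X$; norm convergence is unavailable because $R(\mu,\M_\ell)$ converges only strongly, but strong convergence is enough here: $\Pr_\infty$ is the nontrivial spectral projection of $\L_\infty$ onto $\lambda_\infty$, so picking $w$ with $\Pr_\infty w\neq 0$ gives $\Pr_\ell w\to\Pr_\infty w\neq 0$, hence $\Pr_\ell\neq 0$ for all large $\ell$. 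Therefore $\L_\ell$ has spectrum inside $\vec c\subset\{\Re\lambda>\delta\}$, where for $\ell$ large the spectrum is discrete, so $\L_\ell$ has an unstable eigenvalue $\lambda_\ell$ inside $\vec c$, and in particular $|\lambda_\ell-\lambda_\infty|<\varepsilon$.

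\emph{Main obstacle.} The crux is step (ii): the quantitative comparison of the axisymmetric-without-swirl Biot--Savart operator on the half-plane $\{r>-\ell\}$ with the two-dimensional one, making rigorous with $\ell$-uniform bounds the heuristic that a very long vortex ring is locally two-dimensional, and in particular controlling the contribution of the far-away boundary $\{r=-\ell\}$. A secondary subtlety, accommodated above by keeping the Riesz-projection step at the level of strong convergence, is that the transport part $\M_\ell$ contributes only strong (not norm) resolvent convergence.
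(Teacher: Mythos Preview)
Your proposal is correct and follows the same overall architecture as the paper---compare resolvents and conclude by a Riesz-projection argument---but the packaging differs in two notable ways.

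First, you reduce to the fixed space $X=L^2(B_{\bar R})$ by observing that any eigenfunction with nonzero eigenvalue is supported in $\overline{B_{\bar R}}$ and that $X$ is invariant under all the pieces. The paper instead works on the weighted space $L^2_\gamma$ and introduces the projection $P_\ell:L^2_\gamma\to L^2_{\gamma,\ell}$, then argues through $\L_\ell P_\ell$ (see~\eqref{lemma:projection}). Your reduction is cleaner: it kills outright the paper's Term~$\I_\ell=\K_\infty(P_\ell-I)$, which the paper must estimate separately. Second, you factor $\lambda-\L_\ell=(\lambda-\M_\ell)[\Id-T_\ell(\lambda)]$ with $T_\ell=R(\lambda,\M_\ell)(\K_\ell+\S_\ell)$, whereas the paper first groups $\M_\ell$ with $\K_\infty$ (Lemma~\ref{lem:auxlem}) and then treats the remainder $\I_\ell+\II_\ell+\S_\ell P_\ell$. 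The underlying ingredients---strong convergence of the transport resolvent via flow maps, upgrading to norm convergence by compactness of $\K_\infty$, and smallness of $\S_\ell$---are identical; only the order of operations differs.

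The one place where the paper is substantially more explicit is your step~(ii), the convergence $\BS_\ell\to\BS_{2d}$ on $B_{\bar R}$. You sketch this via pointwise convergence of Green's functions together with a uniform $|x-y|^{-1}$ kernel bound; the paper instead writes down the equation for $(\psi_\ell-\psi)\chi_\ell$ and runs quantitative elliptic estimates on the stream functions (the Term~$\II_\ell$ analysis). Both routes work, but your kernel argument would need the uniform bound and off-diagonal convergence made precise. One minor remark on~(iii): since $\M_\ell$ carries the zeroth-order term $\tfrac12(\div_{2d} v_\ell)\omega$, the semigroup $e^{s\M_\ell}$ is composition with the flow times a multiplicative factor rather than pure composition; this factor tends to $1$ as $\ell\to+\infty$, so the strong convergence you claim still holds (the paper's Lemma~\ref{lem:auxlem} elides the same point).
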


It will be convenient to consider all operators on the single function space $L^2_\gamma$ rather than $L^2_{\gamma,\ell}$. Define the projection operator $P_\ell : L^2_{\gamma} \to L^2_{\gamma,\ell}$ by restriction:
\begin{equation}
    P_\ell \omega = \omega|_{r > -\ell} \, .
\end{equation} 
We consider the operator 
\begin{equation}
    \L_\ell P_\ell = \M_\ell P_\ell + \K_\ell P_\ell +  \S_\ell P_\ell : L^2_{\gamma} \to L^2_\gamma \, ,
\end{equation}
and observe that
%\begin{lemma}\label{lemma:projection}
for $\ell \ge 2 \bar R$ and $\lambda \in \C \setminus \{ 0 \}$,
% \dacomment{* somebody removed that bit in green, but I would prefer to keep it} 
%and $\lambda \in \mathbb{C}$, $\lambda\neq 0$. Then
\begin{equation}\label{lemma:projection}
\lambda \in \sigma(\L_\ell) \qquad    \mbox{if and only if} \qquad \lambda\in \sigma(\L_\ell P_\ell).
\end{equation}
%\end{lemma}
Indeed, if we make the identification
\begin{equation}
    L^2_\gamma = L^2( \{ r < -\ell \} \times \R, \, \gamma \, dr \,dz) \oplus L^2_{\gamma,\ell} \, ,
    \quad
    \omega = ( (I-P_\ell)\omega, P_\ell \omega ) = (f,g) \, ,
\end{equation}
the operator $\lambda + \L_\ell P_\ell$ acts as $(f,g) \mapsto (\lambda f, \lambda g + \L_\ell g)$. Hence, $\lambda + \L_\ell P_\ell$ is invertible if and only if $\lambda + \L_\ell$ is invertible. This implies~\eqref{lemma:projection}.

% \dacomment{* removed: `Moreover, $(\lambda + \L_\ell P_\ell)^{-1}$ is bounded if and only if $(\lambda + \L_\ell)^{-1}$ is bounded.' Anyway, this is automatic in our setting (open mapping theorem). At the very least, we should use the resolvent notation.}

We introduce the operators
\begin{equation}
    \K_\infty : L^2_\gamma \to L^2_\gamma \, , 
    \qquad
    -\K_\infty \omega := \BS_{2d}[\omega]\cdot \nabla \tilde \omega \, ,
\end{equation}
\begin{equation}
    \M_\infty : D(\M_\infty) \subset L^2_\gamma \to L^2_\gamma \, ,
    \qquad
    -\M_\infty \omega := \tilde u \cdot \nabla \omega \, ,
\end{equation}
where $D(\M_\infty) = \{\omega \in L^2_\gamma : \tilde u \cdot \nabla \omega \in L^2(B_{\bar{R}})\}$.

% which is shown in the following Lemma to be the limit of $BS_{\ell}$ as $\ell \to \infty$.

\begin{lemma}
% [Auxiliary lemma]
\label{lem:auxlem}
Let $K$ be a compact subset of $\rho(\M_\infty + \K_\infty) \cap \{ \Re \lambda > 0 \}$. There exists $\ell_0 \geq 2\bar{R}$ such that, for all $\ell \geq \ell_0$, the operator $\lambda - \M_\ell P_\ell - \K_\infty$, acting on $L^2_\gamma$, is invertible for all $\lambda \in K$, and
\begin{equation}
    \label{eq:desiredconvergenceproperty}
    R(\lambda, \M_\ell P_\ell  + \K_\infty) f \overset{\ell \to +\infty}{\longrightarrow} R(\lambda, \M_\infty  + \K_\infty) f \quad \text{ in } L^2_\gamma \, ,
\end{equation}
for all $f \in L^2_\gamma$, uniformly on $K$.
\end{lemma}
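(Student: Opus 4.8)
The plan is to show that $\M_\ell P_\ell$ converges to $\M_\infty$ in the strong resolvent sense (in a form quantitative enough to pass through the compact perturbation $\K_\infty$), and then to conclude invertibility of $\lambda - \M_\ell P_\ell - \K_\infty$ on the compact set $K$ by a Neumann-series argument. First I would record that $\M_\infty$ and $\M_\ell$ differ only by lower-order, $\ell$-vanishing terms on $B_{\bar R}$: since $\tilde u_\ell = \tilde u + v_\ell$ with $v_\ell \to 0$ in $C^k(B_{\bar R})$ by Lemma~\ref{lemma:div}, and since $\div_{2d} v_\ell \to 0$ in $L^\infty$ (as computed in the proof of Lemma~\ref{lem:basicproperties}), we have $-\M_\ell \omega = \tilde u \cdot \nabla \omega + (v_\ell \cdot \nabla \omega + \tfrac12 (\div_{2d} v_\ell)\omega)$, where the parenthetical remainder is a relatively bounded perturbation of $\M_\infty$ with relative bound $o_{\ell\to+\infty}(1)$, supported in $B_{\bar R}$.

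Next I would exploit skew-adjointness. Both $\M_\infty$ and $\M_\ell P_\ell$ are skew-adjoint on $L^2_\gamma$ (the latter because $\M_\ell$ is skew-adjoint on $L^2_{\gamma,\ell}$ and $P_\ell$ just decomposes the space, as in the discussion of~\eqref{lemma:projection}). Hence for $\Re\lambda > 0$ one has the a priori bound $\|R(\lambda,\M_\infty)\|, \|R(\lambda,\M_\ell P_\ell)\| \leq 1/\Re\lambda$, uniformly. Writing $\lambda - \M_\ell P_\ell = (\lambda - \M_\infty)\big[I - R(\lambda,\M_\infty)(\M_\ell P_\ell - \M_\infty)\big]$ on $D(\M_\infty) = D(\M_\ell P_\ell)$ (the domains agree because the difference is $\M_\infty$-bounded), the correction $R(\lambda,\M_\infty)(\M_\ell P_\ell - \M_\infty)$ has operator norm $o_{\ell\to+\infty}(1)$ uniformly for $\lambda$ in the compact set $\{\Re\lambda \geq \min_{K}\Re\lambda\} \supset K$; here I use that the remainder term $v_\ell\cdot\nabla(\cdot) + \tfrac12(\div_{2d}v_\ell)(\cdot)$ is $\M_\infty$-bounded with small relative bound, so its composition with the (graph-)bounded operator $R(\lambda,\M_\infty)$ is small in norm. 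A Neumann series then gives, for $\ell$ large, invertibility of $\lambda - \M_\ell P_\ell$ on $K$ with $R(\lambda,\M_\ell P_\ell) \to R(\lambda,\M_\infty)$ in operator norm, uniformly on $K$.

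To incorporate $\K_\infty$, I note that $\K_\infty$ is compact (by the same Biot--Savart estimate as in Lemma~\ref{lem:basicproperties}, now with $\BS_{2d}$ in place of $\BS_\ell$ and $\tilde\omega$ compactly supported), hence $\sigma_{\rm ess}(\M_\infty + \K_\infty) = \sigma_{\rm ess}(\M_\infty) \subset i\R$, and for $\lambda\in K \subset \rho(\M_\infty + \K_\infty)\cap\{\Re\lambda>0\}$ I may write
\begin{equation}
\lambda - \M_\ell P_\ell - \K_\infty = (\lambda - \M_\infty - \K_\infty)\big[I - R(\lambda,\M_\infty+\K_\infty)(\M_\ell P_\ell - \M_\infty)\big].
\end{equation}
Since $R(\lambda,\M_\infty+\K_\infty)$ is bounded uniformly on the compact set $K$ and $(\M_\ell P_\ell - \M_\infty)$, composed with it, is $o(1)$ in norm (again using the $\M_\infty$-relative smallness and that $R(\lambda,\M_\infty+\K_\infty)$ maps into the graph-norm domain), a Neumann series yields invertibility of $\lambda - \M_\ell P_\ell - \K_\infty$ on $K$ for $\ell \geq \ell_0$, together with the resolvent identity
\begin{equation}
R(\lambda,\M_\ell P_\ell + \K_\infty) = \big[I - R(\lambda,\M_\infty+\K_\infty)(\M_\ell P_\ell - \M_\infty)\big]^{-1} R(\lambda,\M_\infty+\K_\infty),
\end{equation}
which converges in operator norm (in particular in the sense of~\eqref{eq:desiredconvergenceproperty}) to $R(\lambda,\M_\infty+\K_\infty)$, uniformly on $K$. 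The main obstacle I anticipate is the domain/relative-boundedness bookkeeping in the first step: one must verify carefully that $v_\ell\cdot\nabla(\cdot)$ is genuinely $\M_\infty$-bounded with small relative bound on $D(\M_\infty)$ — this is where the structure $v_\ell = \tilde u_\ell - \tilde u$ with $v_\ell$ small in $C^1(B_{\bar R})$ and the fact that $\tilde u$ is bounded below away from $0$ on an annulus inside $B_{\bar R}$ (so that $\tilde u\cdot\nabla\omega\in L^2(B_{\bar R})$ controls $\nabla\omega$ where $v_\ell$ is supported, up to the region where $\tilde u$ vanishes, which must be handled separately) enters; alternatively one can absorb this by noting $v_\ell \cdot \nabla \omega = \div_{2d}(v_\ell \omega) - (\div_{2d} v_\ell)\omega$ and that $\div_{2d}(v_\ell \,\cdot\,)$ is small relative to $\M_\infty$ after an integration by parts against test functions, but the cleanest route is simply to observe that on $D(\M_\infty)$ the quantity $\tilde u \cdot \nabla \omega \in L^2(B_{\bar R})$ together with $\omega \in L^2_\gamma$ controls $v_\ell \cdot \nabla\omega$ in $L^2$ with constant $o_{\ell}(1)$ wherever $v_\ell/\tilde u$ is bounded, and where $\tilde u$ degenerates $v_\ell$ also degenerates at a comparable or faster rate by construction.
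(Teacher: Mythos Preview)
Your argument has a genuine gap at the step you yourself flag as the main obstacle, and the suggested fixes do not work. You need the difference $\M_\ell P_\ell - \M_\infty$, whose principal part is $-v_\ell \cdot \nabla$, to be $\M_\infty$-bounded with relative bound $o_{\ell\to\infty}(1)$. But $\tilde u = \phi_{\bar R}(\varrho)\zeta(\varrho)\,x^\perp$ is a vortex: in polar coordinates $(\varrho,\varphi)$ on the cross-section, $\tilde u$ is purely angular, so $\tilde u \cdot \nabla \omega$ controls only $\partial_\varphi \omega$ and carries no information about $\partial_\varrho \omega$. The Bogovskii correction $v_\ell = (r+\ell)^{-1} w$ with $w \in C^\infty_0(B_{\bar R};\R^2)$ fixed has, generically, a nonzero radial component; hence $v_\ell \cdot \nabla \omega$ involves $\partial_\varrho \omega$. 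For any radial $\omega = g(\varrho)$ one has $\M_\infty \omega = 0$ while $v_\ell \cdot \nabla \omega = v_\ell^\varrho\, g'(\varrho)$ can be made arbitrarily large in $L^2(B_{\bar R})$ by choosing $g'$ large, with $\|\omega\|_{L^2_\gamma}$ fixed. So $v_\ell \cdot \nabla$ is \emph{not} relatively bounded with respect to $\M_\infty$, for any $\ell$; the ratio ``$v_\ell/\tilde u$'' is not meaningful since the vectors are not parallel, and there is no reason coming from Bogovskii's construction for $v_\ell$ to degenerate where $\tilde u$ does. As a further consequence the domains $D(\M_\infty)$ and $D(\M_\ell P_\ell)$ need not coincide, so your factorization $\lambda - \M_\ell P_\ell = (\lambda - \M_\infty)[I - R(\lambda,\M_\infty)(\M_\ell P_\ell - \M_\infty)]$ is not even well posed on $D(\M_\infty)$.

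The paper avoids this entirely: it never compares $\M_\ell P_\ell$ and $\M_\infty$ at the level of the generators. Instead it represents the resolvents via the flow maps,
\[
R(\lambda,\M_\ell P_\ell)f(x) - R(\lambda,\M_\infty)f(x) = \int_0^\infty e^{-\lambda t}\big(f(X^\ell_t(x)) - f(X_t(x))\big)\,dt,
\]
and uses $X^\ell_t \to X_t$ in $C^1$ (from $v_\ell \to 0$ in $C^k$) to obtain \emph{strong} convergence $R(\lambda,\M_\ell P_\ell)f \to R(\lambda,\M_\infty)f$. This is weaker than the norm convergence you aim for, and indeed norm convergence should not be expected here. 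The compactness of $\K_\infty$ is then used to upgrade strong convergence to norm convergence of $R(\lambda,\M_\ell P_\ell)\K_\infty$ (approximate $\K_\infty$ by finite-rank operators), after which the factorization
\[
\lambda - \M_\ell P_\ell - \K_\infty = (\lambda - \M_\ell P_\ell)\big[I - R(\lambda,\M_\ell P_\ell)\K_\infty\big]
\]
and a Neumann series give the conclusion. Note the order of the factorization: the paper perturbs off $\M_\ell P_\ell$, not off $\M_\infty + \K_\infty$, precisely so that only the bounded operator $\K_\infty$ appears in the bracket.
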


\begin{proof}
First, we show that
\begin{equation}
    \label{eq:modeofconvergenceone}
    R(\lambda, \M_\ell P_\ell) f \overset{\ell \to +\infty}{\longrightarrow} R(\lambda, \M_\infty) f \quad \text{ for all } f \in L^2_\gamma
\end{equation}
locally uniformly in $\{ \Re \lambda > 0 \}$.
To do so, we employ the identity
\begin{equation}
    [R(\lambda,  \M_\ell P_\ell) - R(\lambda,  \M_\infty)]f(x)
    = \int_0^\infty e^{-\lambda t} (f(X^\ell_t(x)) - f(X_t(x))) d t \, ,
\end{equation}
where $X^\ell$ and $X$ are the flow maps associated to, respectively, $\tilde u_\ell$ and $\tilde u$. Since $X_t^\ell(x) = X_t(x) = x$ for any $x\in \R^2\setminus B_{\bar R}$, it holds
\begin{equation}
    \| [R(\lambda,  \M_\ell P_\ell) - R(\lambda,  \M_\ell P_\ell)] f \|_{L^2_\gamma}
      \le \int_0^\infty e^{- \Re \lambda t} \| f(X_t^\ell) - f(X_t)\|_{L^2(B_{\bar R})} d t \, .
\end{equation}
The claimed conclusion \eqref{eq:modeofconvergenceone} follows from a simple approximation argument and the fact that
$X_t^\ell \overset{\ell \to +\infty}{\longrightarrow}X_t$ in $C^1$ as a consequence of Lemma~\ref{lemma:div}.

% This is easily verified for $f \in C^\infty_0(\R^2)$ by subtracting the two resolvent problems and performing an energy estimate, and the case of general $f$ is by approximation. Also, all these resolvents satisfy uniform bounds. \dacomment{I legitimately plan to add details here...} 

We now show that
\begin{equation}\label{zzz2}
    I - R(\lambda, \M_\ell P_\ell) \K_\infty  \overset{\ell \to +\infty}{\longrightarrow} I - R(\lambda, \M_\infty) \K_\infty
\end{equation}
in operator norm,
locally uniformly in $\{ \Re \lambda > 0 \}$.

To this aim we use that $\K_\infty$ is a compact operator on the separable Hilbert space $L^2_\gamma$; hence, it can be approximated in operator norm by a sequence $(F_k)$ of finite-rank operators on $L^2_\gamma$.
Due to~\eqref{eq:modeofconvergenceone}, we have that
\begin{equation}
    R(\lambda, \M_\ell P_\ell) F_k \overset{\ell \to +\infty}{\longrightarrow} R(\lambda, \M_\infty) F_k,
\end{equation}
in operator norm,
locally uniformly in $\{ \Re \lambda > 0 \}$,
for each $k \in \N$. Therefore, \eqref{zzz2} follows from a simple diagonal argument.

From \eqref{zzz2} it follows that there exists $\ell_0 \geq 2\bar{R}$ such that, for all $\ell \geq \ell_0$, we have that $\lambda - \M_\ell P_\ell - \K_\infty$ is invertible, and
\begin{equation}\label{zzz3}
    R(\lambda, \M_\ell P_\ell + \K_\infty) = [ I - R(\lambda, \M_\ell P_\ell) \K_\infty]^{-1} R(\lambda, \M_\ell P_\ell) \, .
\end{equation}
Indeed, the identity
\begin{equation}
    \lambda - \M_\ell P_\ell - \K_\infty = (\lambda - \M_\ell P_\ell) [I - R(\lambda,  \M_\ell P_\ell) \K_\infty] \, ,
\end{equation}
tells us that $\lambda - \M_\ell P_\ell - \K_\infty $ is invertible if and only if $I - R(\lambda,  \M_\ell P_\ell) \K_\infty$ is invertible. When $\lambda \in K$, the operator on the right-hand side of \eqref{zzz2} is invertible, and its inverse depends continuously on $\lambda$; therefore, when $\ell$ is sufficently large, we have that $I - R(\lambda,  \M_\ell P_\ell) \K_\infty$ is invertible,

To conclude the proof of \eqref{eq:modeofconvergenceone} it remains only to show that 
\begin{equation}
    [ I - R(\lambda, \M_\ell P_\ell) \K_\infty]^{-1}
    \overset{\ell \to +\infty}{\longrightarrow}
    [ I - R(\lambda, \M_\infty) \K_\infty]^{-1} \, ,
\end{equation}
which again is a consequence of \eqref{zzz2} and the invertibility of $I - R(\lambda, \M_\infty) \K_\infty$.

% Notice that \eqref{zzz3}, \eqref{eq:modeofconvergenceone}, and \eqref{zzz2} imply the desired convergence property~\eqref{eq:desiredconvergenceproperty}.

% When $\lambda \in K$, the operator on the right-hand side of \eqref{zzz2} is invertible, and its inverse depends continuously on $\lambda$; therefore, when $\ell$ is sufficiently large, we have that the operator on the left-hand side is invertible.

% and its inverse converges to the inverse of the right-hand side in operator norm uniformly on $K$.
% The sought conclusion follows from the identity
% \begin{equation}
%     \lambda + \M_\ell P_\ell + \K_\infty = (\lambda + \M_\ell P_\ell) [I + R(\lambda, - \M_\ell P_\ell) \K_\infty] \, .
% \end{equation}

% Then we have
% \begin{equation}
%     \lambda + \M_\ell + \K_\infty = (\lambda + \M_\ell) [ I + R(\lambda,-\M_\ell) \K_\infty],
% \end{equation}
% on $D(\M_\ell)$. Due to~\eqref{eq:modeofconvergenceone}, we have that
% \begin{equation}
%     R(\lambda,-\M_\ell) F_k \overset{\ell \to +\infty}{\longrightarrow} R(\lambda,-\M_\infty) F_k,
% \end{equation}
% in operator norm,
% locally uniformly in $\{ \Re \lambda > 0 \}$,
% for each $k \in \N$. Therefore, by a diagonalization argument, we have that
% \begin{equation}
%     I + R(\lambda,-\M_\ell) \K_\infty \to I + R(\lambda,-\M_\infty) \K_\infty
% \end{equation}
% in operator norm,
% locally uniformly in $\{ \Re \lambda > 0 \}$. 
\end{proof}

\begin{proof}[Proof of Proposition~\ref{pro:axisymmetricinstab}]

Thanks to \eqref{lemma:projection} and the fact that $\sigma(\L_\ell)$  in  $\{ \Re \lambda > o_{\ell \to +\infty}(1)\}$ consists of eigenvalues with finite multiplicity, it is enough to show that $\sigma(\L_\ell P_\ell)\cap \{ \Re \lambda > o_{\ell \to +\infty}(1)\}$ is non-empty.

For any $\lambda\in \rho( \M_\infty + \K_\infty) \cap \{ \Re \lambda > 0 \}$ we decompose the operator $\lambda - \L_\ell P_\ell$ as
\begin{equation}
\begin{aligned}
        &\lambda - \M_\ell P_\ell - \K_\ell P_\ell - \S_\ell P_\ell \\
        &\quad = \lambda - \M_\ell P_\ell - \K_\infty - \underbrace{\K_\infty(P_\ell - I)}_{=: \I_\ell}  - \underbrace{(\K_\ell P_\ell - \K_\infty)P_\ell}_{=: \II_\ell} - \S_\ell P_\ell \\
        &\quad = (\lambda - \M_\ell P_\ell - \K_\infty) [I - R(\lambda, \M_\ell P_\ell  + \K_\infty) (\I_\ell + \II_\ell + \S_\ell P_\ell)]
\end{aligned}
\end{equation}
on $D(\L_\ell)$ when $\ell \geq \ell_0$, see Lemma~\ref{lem:auxlem}. We argue below that $\I_\ell$ and $\II_\ell$ converge to zero as $\ell \to +\infty$ (it is already known that $\S_\ell$ does so, see Lemma~\ref{lem:basicproperties}). The consequence is that $I - R(\lambda, \M_\ell P_\ell + \K_\infty) (\I_\ell + \II_\ell + \S_\ell P_\ell)$, hence, $\lambda - \L_\ell P_\ell$, will be invertible when $\ell$ is large enough.

\textbf{1. Term $\I_\ell$}. Let us show that
\begin{equation}
\label{eqn:term1}
\|\K_\infty(P_\ell - I) \|_{L^2_\gamma \to L^2_\gamma}  \to 0 \qquad \mbox{as } \ell \to +\infty \, . 
\end{equation}
For any $\omega \in L^2_\gamma$, it holds
\begin{align*}
    \|\K_\infty(P_\ell - I) \omega \|_{ L^2_\gamma} 
    & =
    \|\K_\infty(P_\ell - I) \omega \|_{ L^2(B_{\bar{R}})} 
    \\ &\leq 
    \|\BS_{2d}[\omega 1_{\{r<-\ell\}}] \|_{ L^2(B_{\bar{R}})}  \| \nabla \tilde \omega _\ell\|_{ L^\infty} \, .
\end{align*}
The second factor is bounded since $\tilde \omega$ is smooth with bounded support and $\curl v_\ell$ and its derivatives are bounded by Lemma~\ref{lemma:div}. As regards the first term, since $\BS_{2d}$ sends $L^q(\R^2)$ to $L^{q^*}(\R^2)$ for any $q\in (1,2)$, by H{\"o}lder's inequality, we have for any fixed $\eps \in (0,1)$ that
\begin{equation}
\begin{split}
 \|\BS_{2d}[\omega 1_{\{r<-\ell\}}] \|_{ L^2(B_{\bar{R}})}
&\les_{\eps} \| \omega 1_{\{r<-\ell\}}\|_{L^{1+\eps}}
\\& \les_{\eps} \| \omega \|_{L^2(\gamma \, dr\, dz)} \| \la r, z \ra^{-50(1+\eps)} 1_{\{r<-\ell\}}\|_{L^{ \frac{2 }{1-\eps}}}^{\frac{1}{1+\eps}} \, .
\end{split}
\end{equation}
Notice that the second factor on the right-hand side is $o_{\ell \to +\infty}(1)$.

% a constant as soon as $n>4$ and $\eps$ is chosen sufficiently small in terms of $n$, namely so that $n (1+\eps) >4(1-\eps)$.

\textbf{2. Term $\II_\ell$}.
We show that
\begin{equation}\label{eqn:term2}
    \| (\K_\ell P_\ell - \K_\infty) P_\ell \|_{L^2_\gamma \to L^2_\gamma} \to 0 \quad \text{ as } \ell \to +\infty.
\end{equation}
Let $\omega$ be a function supported on ${\{r>-\ell\}}$.
It is enough to prove that
\begin{equation}
    \| \BS_{2d}[\omega] - \BS_\ell[\omega]\|_{L^2(B_{\bar{R}})}
    \le C(\ell) \| \omega \|_{L^2_\gamma} \, ,
\end{equation}
where $C(\ell) = o_{\ell \to +\infty}(1)$.

Recall that
\begin{equation}
    \BS_{2d}[\omega] = - \partial_z \psi \,  e_r + \partial_r \psi \, e_z \, ,
\end{equation}
\begin{equation}
    \BS_\ell[\omega] =  -\p_z \psi_\ell \, e_r +  \p_r \psi_\ell \, e_z + \frac{1}{r+\ell} \psi_\ell \, e_z \, ,
\end{equation}
where $\psi$, $\psi_\ell$ are the stream functions associated to $\omega$; they satisfy
\begin{equation}
    \Delta_\ell \psi_\ell = \left(\p_r^2 + \frac{1}{r+\ell} \p_r - \frac{1}{(r+\ell)^2} + \p_z^2 \right) \psi_\ell = \omega \, ,
    \qquad
    \p_n \psi_\ell|_{r = -\ell} = 0 \, ,
\end{equation}
\begin{equation}
    \Delta \psi = (\p_r^2  + \p_z^2) \psi = \omega \, .
\end{equation}
In particular, we have
\begin{equation}
    \| \BS_{2d}[\omega] - \BS_\ell[\omega]\|_{L^2(B_{\bar{R}})}
    \le 
    \| (r+\ell)^{-1} \psi_\ell \|_{L^2(B_{\bar R})} + \| \nabla (\psi_\ell - \psi) \|_{L^2(B_{\bar R})} =: I + II \, .
\end{equation}
We now estimate $I$ and $II$ separately.

Let us begin by studying $I$.
We recall the standard Young convolution inequality applied to the Biot-Savart kernel in $\R^3$ gives 
\begin{equation}\label{eqn:CZ}
    \begin{split}
    \| \psi_\ell \|_{L^{q}( ({r+\ell}) \, dr \, dz )} 
    + &
    \|(\p_r + (r+\ell)^{-1}, \p_z) \psi_\ell \|_{L^{p^*}((r+\ell)dr dz)}
    \\&
    \les_p \| \omega \|_{L^p((r+\ell)\, dr \, dz)} 
    \les_{p} \ell^{1/p}\| \omega \|_{L^2_\gamma} \, .
    \end{split}
\end{equation}
for any $p \in (1,3/2)$, where $1/p^* = 1/p-1/3$ and  $1/q= 1/p-2/3$.

To bound $I$, it is enough to use H{\"o}lder's inequality and \eqref{eqn:CZ} with $p=6/5$ and $q = 6$. Indeed,
\begin{equation}
\begin{split}
\label{eqn:boh}
\|(r+\ell)^{-1} \psi_\ell \|_{L^{2}(B_{\bar R})}
&\les
\|(r+\ell)^{-1} \psi_\ell \|_{L^{6}(B_{\bar R})}
 \les \ell^{-1} \|\psi_\ell \|_{L^{6}((r+\ell) \, dr \, dz )}
\\&\les
  \ell^{-1/6} \| \omega \|_{L^2_\gamma} \, .
 \end{split}
\end{equation}

Let us now study $II$.
% We are going to use the bound
% \dacomment{I think this should be $2-2/p$. It was $1-2/p$. This does not affect where this estimate is used later. Confirm?}
\begin{equation}\label{eqn:CZ2}
\| \psi \|_{\dot C^{2-2/p}}+
   \| \nabla \psi \|_{L^{p^*}(  dr \, dz )}\les_p \| \omega \|_{L^p( dr \, dz)} \les_p \| \omega \|_{L^2_\gamma}
\end{equation}
for any $p\in (1,2)$, where $1/p^*= 1/p-1/2$.
Notice that $\psi$ is only defined up to a constant; here and in the following, we assume that $\psi(0)=0$, which implies
% \dacomment{changed $1+2/q-2/p$ in the power of $R$}
\begin{equation}\label{zzz5}
    \| \psi \|_{L^q(B_R)} \les_p R^{2+\frac{2}{q}-\frac{2}{p}} \| \omega \|_{L^2_\gamma} \, 
    \qquad \text{for any $R>1$, $q\ge 1$, $p\in (1,2)$} \, .
\end{equation}

% Let us denote by $A_\ell$ the annulus $B_{\ell/2}\setminus B_{\ell/4}$ and 

Let us denote by $\chi_\ell$ a standard cutoff which is $1$ on $ B_{\ell/4}$ and $0$ outside $B_{\ell/2}$, with $|\nabla \chi_\ell| \les \ell^{-1}$ and $|\nabla ^2\chi_\ell| \les \ell^{-2}$. Here, $\ell \geq 8\bar{R}$.
We write an equation for $(\psi_\ell-\psi)\chi_\ell$ by using the equations for $\psi_\ell$ and $\psi$. We obtain
\begin{equation}\label{zzz4}
    \Delta [(\psi_\ell-\psi) \chi_\ell] = \frac{1}{(r+\ell)^2} \psi_\ell \chi_\ell -  \frac{1}{r+\ell} \partial_r\psi_\ell \chi_\ell + 
2\nabla (\psi_\ell-\psi) \cdot \nabla \chi_\ell+ (\psi_\ell-\psi) \Delta \chi_\ell \, .
\end{equation}
By using H{\"o}lder's inequality and the Sobolev embedding in $\R^2$, we deduce
\begin{equation}
\| \nabla (\psi_\ell-\psi)\|_{L^2 (B_{\bar R})}
 \les
\| \nabla [(\psi_\ell-\psi) \chi_\ell]\|_{L^6}
 \les
 \|  \Delta [(\psi_\ell-\psi) \chi_\ell] \|_{L^{3/2}} \, ,
\end{equation}
and \eqref{zzz4} implies
\begin{equation}
\begin{split}
 &\| \Delta  [(\psi_\ell-\psi) \chi_\ell] \|_{L^{3/2}(B_{\ell/2})}
\\& =
\|(r+\ell)^{-2} \psi_\ell \chi_\ell - (r + \ell)^{-1} \partial_r\psi_\ell \chi_\ell +
2\nabla (\psi_\ell-\psi) \cdot \nabla \chi_\ell+ (\psi_\ell-\psi) \Delta \chi_\ell \|_{L^{\frac3 2}(B_{\ell/2})}
\\
&\les
\frac{1}{\ell^2} \big(\| \psi_\ell \|_{L^{3/2}(B_{\ell/2})}+\| \psi \|_{L^{3/2}(B_{\ell/2})}\big) 
\\ & \qquad\qquad
+\frac 1 \ell \big(\|(\p_r + (r+\ell)^{-1}, \p_z) \psi_\ell \|_{L^{\frac 32}(B_{\ell/2})}+ \|\nabla \psi \|_{L^{\frac 32}(B_{\ell/2})}\big) \, .
% \\
% &\les \frac{1}{\ell} \| \psi_\ell \|_{L^3((r+\ell) \, dr\, dz)} + \frac{1}{\ell^{1/2}} \| \omega \|_{L^2_\gamma}
%  + 
% \frac 1 {\ell^{5/3}} \|(\p_r + (r+\ell)^{-1}, \p_z) \psi_\ell \|_{L^{\frac 32}( (r+\ell) \, dr \, dz )}+ \frac 1 {\ell^...} \|\nabla \psi \|_{L^{5/2}}
\end{split}
\end{equation}
By using H{\"o}lder's inequality and \eqref{eqn:CZ} we estimate the first term:
\begin{equation}
    \frac{1}{\ell^2} \| \psi_\ell \|_{L^{3/2}(B_{\ell/2})}
    \les 
    \frac{1}{\ell} \| \psi_\ell \|_{L^{4}((r+\ell) \, dr\, dz)}
    \les
    \frac{1}{\ell^{\frac{1}{12}}} \| \omega \|_{L^2_\gamma} \, .
\end{equation}
To estimate the second term we employ \eqref{zzz5}:
\begin{equation}
    \frac{1}{\ell^2} \| \psi \|_{L^{3/2}(B_{\ell/2})}
    \les \frac{1}{\ell^{1/2}}  \| \omega \|_{L^2_\gamma} \, .
\end{equation}
We apply \eqref{eqn:CZ} to the third term: For $\eps>0$ small enough, it holds
\begin{equation}
    \begin{split}
    \frac 1 \ell \|(\p_r + (r+\ell)^{-1}, \p_z) \psi_\ell \|_{L^{\frac 32}(B_{\ell/2})}
     & \les
     \frac {1}{\ell^{9/10}} \|(\p_r + (r+\ell)^{-1}, \p_z) \psi_\ell \|_{L^{\frac{3}{2} + \eps}(B_{\ell/2})}
     \\& \les 
    \frac 1 {\ell^{4/3}} \|(\p_r + (r+\ell)^{-1}, \p_z) \psi_\ell \|_{L^{\frac{3}{2} + \eps}((r+\ell)dr dz)}
    \\&\les_\eps
    \frac{1}{\ell^{1/3}} \| \omega \|_{L^2_\gamma} \, .
    \end{split}
\end{equation}
To estimate the last term, we use H{\"o}lder's inequality and \eqref{eqn:CZ2}:
\begin{equation}
   \frac{1}{\ell} 
   \| \nabla \psi\|_{L^{3/2}(B_{\ell/2})}
   \les 
   \frac{1}{\ell^{\frac{7}{15}}} \| \nabla \psi \|_{L^{5/2}(B_{\ell/2})}
   \les \frac{1}{\ell^{\frac{7}{15}}}\| \omega \|_{L^2_\gamma} \, .
\end{equation}

% The standard Young convolution inequality applied to the Biot-Savart kernel in $\R^3$ gives 
% \begin{equation}\label{eqn:CZ}
%     \| \psi_\ell \|_{L^{p^{**}}( ({r+\ell}) \, dr \, dz )} +     \| (\p_r + (r+\ell)^{-1}, \p_z) \psi_\ell \|_{L^{p^*}( (r+\ell) \, dr \, dz )}\les_p \| \omega \|_{L^p((r+\ell)\, dr \, dz)} \les_p \ell^{1/p}\| \omega \|_{L^2_\gamma}.
% \end{equation}

%By \eqref{eqn:CZ} we have
%$$ \|\frac{1}{r+\ell} \psi_\ell \|_{L^2(A_\ell)} \leq \frac{C}{\ell}\| \psi_\ell \|_{L^{3} ((r+\ell) 1_{A_\ell})} %(\ell^2)^{1/2- 1/(6+\eps)}
%\leq \| \omega \|_{L^2_\gamma}.
%$$
%As regards the second addendum in the left-hand side,

%{\color{red}We have the following preliminary estimates, which may be useful to record elsewhere, too:
%\begin{equation}
    %\| \psi_\ell \|_{L^p( \frac{r+\ell}{\ell} \, dr \, dz )} \les_p \| f_\ell \|_{L^1 \cap L^2(\frac{r+\ell}{\ell} \, dr \, dz)} \les_p \| f \|_{L^2_\gamma}.
%\end{equation}
%$p \in (6,+\infty]$
%\begin{equation}
    %\| (\p_r + (r+\ell)^{-1}, \p_z) \psi_\ell \|_{L^p( \frac{r+\ell}{\ell} \, dr \, dz )} \les_p \| f \|_{L^2_\gamma}
%\end{equation}
%$p \in (3/2,6]$

%The second term measures how different the two BS laws are and uses essentially the weights.
%Idea: solve Neumann problem, cut it. Piece near boundary is small thanks to the weight, the rest gets compared with true BS. This should be the hardest term -- focus on this one.

\textbf{3. Conclusion}. From the above arguments and Lemma~\ref{lem:auxlem}, we conclude the following. Let $K$ be a compact subset of $\rho(\M_\infty + \K_\infty) \cap \{ \Re \lambda > 0 \}$. There exists $\ell_1 \geq \ell_0$ such that, for all $\ell \geq \ell_1$, the operator $\lambda - \M_\ell P_\ell - \K_\ell P_\ell - \S_\ell P_\ell$ is invertible on $L^2_\gamma$, and its inverse converges to $R(\lambda, \M_\infty + \K_\infty)$ pointwise, uniformly on $K$, as $\ell \to +\infty$.

%Consider $\lambda_\infty$ of $\L_\infty$, $\varepsilon_0 \in (0,\Re(\lambda_\infty))$, and a sequence $(\ell^{(k)})$ with $\ell^{(k)} \to +\infty$ and such that, for all $k \in \N$, $\L_{\ell^{(k)}}$ has no eigenvalue in $B_{\varepsilon_0}(\ell_\infty)$.

Let $\vec{c} \subset B_{\varepsilon}(\ell_\infty)$ be a counterclockwise, smooth, simple, closed curve containing $\lambda_\infty$ in its interior and no other element of $\sigma(\L_\infty)$. Let $K = \vec{c}$. When $\ell \geq \ell_1$, we may define the spectral projections $\Pr_\ell : L^2_{\gamma} \to L^2_{\gamma}$ according to
\begin{equation}
    \Pr_\ell = \frac{1}{2\pi i} \int_{\vec{c}} R(\lambda,\L_{\ell}P_\ell) \, d\lambda,
\end{equation}
and analogously $\Pr_\infty$. In particular, the integrand is well defined when $\ell \geq \ell_1$. Then $\Pr_\ell f \to \Pr_\infty f$ for all $f \in C^\infty_0(\R^2)$ as $k \to +\infty$. In particular, since $\Pr_\infty$ is non-trivial, we must have that $\Pr_\ell$ is non-trivial for all sufficiently large $\ell$. \end{proof}

%\emph{In principle, one can compare the kernels of $K_\ell$ and $K_\infty$. They are explicit.}

\section{Linear instability: from Euler to Navier-Stokes equations}
\label{sec:eulertonavierstokes}

For sufficiently large $\ell$ (which will be fixed momentarily), we define $\bar{U}$ to be the axisymmetric-no-swirl velocity field on $\R^3$ associated with $\tilde{u}_\ell$:
\begin{equation}
    \bar{U}(x) = \tilde{u}_\ell^r(r+\ell,z) e_r + \tilde{u}_\ell^z(r+\ell,z) e_z,
\end{equation}
where $x = (x',z)$ and $r = |x'|$. Let $L^2_{\rm aps}$ denote the set of $L^2$-integrable `axisymmetric pure-swirl' vector fields $\Omega = \Omega^\theta(r,z) e_\theta$.

In this section, we concern ourselves with instability for the operator
\begin{equation}
    \L^{(\beta)}_{\rm vor} : D(\L^{(\beta)}_{\rm vor}) \subset L^2_{\rm aps} \to L^2_{\rm aps}
\end{equation}
\begin{equation}
    - \L^{(\beta)}_{\rm vor} = (-1-\frac{\xi}{2} \cdot \nabla_\xi) \Omega - \Delta \Omega + \beta [\bar{U},\Omega] + \beta [U,\bar{\Omega}],
\end{equation}
where $U = \BS_{3d}[\Omega]$. Its domain is
\begin{equation}
    D(\L^{(\beta)}_{\rm vor}) := \{ \Omega \in L^2_{\rm aps} : \Omega \in H^2(\R^3), \, \xi \cdot \nabla_\xi \Omega \in L^2(\R^3) \} \, .
\end{equation}
We further introduce the operators comprising $\L^{(\beta)}_{\rm vor}$:
\begin{equation}
- \D \Omega := - \frac{3}{4} \Omega - \frac{\xi}{2} \cdot \nabla_\xi \Omega - \Delta \Omega \,
%+ \beta \bar{U} \cdot \nabla \Omega %{\rm div} \left(\left(\frac{\xi}{2} - \beta \bar U\right) \Omega\right) + \frac{3\Omega}{4} 
%{-  \Delta \Omega} \, ,
\end{equation}
\begin{align}
- \M \Omega &:= \bar{U} \cdot \nabla \Omega \label{e:S1}
\end{align}
\begin{equation}
    - \S \Omega :%=  \beta [\bar{U},\Omega] + \beta [U,\bar{\Omega}] 
= %\bar U \cdot \nabla \Omega
 - \bar \Omega \cdot \nabla U - \Omega \cdot \nabla \bar U 
\end{equation}
\begin{align}
- \K \Omega &:= (\BS_{3d}[\Omega] \cdot \nabla) \bar \Omega \label{e:compatto}\, .
\end{align}
The naming convention is as follows: $\D$ is the `dissipation', $\M$ is the `main term', $\S$ arises from the `stretching' and is `small', and $\K$ is easily verified to be `compact'. Notice that $\D - \Delta$ is a skew-symmetric operator; this will play a role in Lemma~\ref{lemma:onlynew}.

Finally, we define
 \begin{equation}
    \T_\beta: =  \frac{1}{\beta} \D + \M + \S + \K =  \frac{1}{\beta} \left( \L^{(\beta)}_{\rm vor} - \frac{1}{4} \right).
\end{equation}
% which is related to $\L^{(\beta)}_{\rm vor}$ in the following way:
% \begin{equation}
%     \L^{(\beta)}_{\rm vor} = \beta \T_\beta + \frac{1}{4}.
% \end{equation}
In analogy with $\T_\beta$ we introduce
\begin{equation}
    \T_\infty := \M + \S + \K \, ,
\end{equation}
% The operator $\T_\infty$ is defined analogously to $\T_\beta$, although 
% its domain is different: $D(\T_\infty) = \{ \Omega \in L^2_{\rm aps} : \bar{U} \cdot \nabla \Omega \in L^2(\R^3) \}$. 
whose domain is $D(\T_\infty) = \{ \Omega \in L^2_{\rm aps} : \bar{U} \cdot \nabla \Omega \in L^2(\R^3) \}$. 

Let
    \begin{equation}
        a := \sup_{\lambda \in \sigma(\T_\infty)} \Re \lambda, \quad \mu := \| S \|_{L^2_{\rm aps} \to L^2}.
    \end{equation}
     The above constants depend on $\ell$, which we fix momentarily to ensure that
     \begin{equation}
        \label{eq:abiggerthanmu}
         a > \mu.
     \end{equation} From Proposition~\ref{pro:axisymmetricinstab}, we know that $a > \text{const}.$ for all sufficiently large $\ell$. On the other hand, we now justify that $\mu \to 0$ as $\ell \to +\infty$. Using the axisymmetric-pure-swirl structure to estimate~$\S$, we have
\begin{equation}
    (\S \Omega)^{\theta}=
 \frac 1 r \bar \Omega^\theta  U^r +  \frac 1 r \Omega^\theta  \bar U^r.
\end{equation}
Since $\bar{U}$ is compactly supported and, specifically, $\supp \bar{U} \subset \{ x \in \R^3 : |r - \ell|^2 + |z|^2 \leq \bar{R}^2 \}$, we have by the Calder{\'o}n--Zygmund estimates that
\begin{equation}
    \begin{split}
        \|\S \Omega \|_{L^2} &\leq \left\|\frac 1 r \bar \Omega^\theta  U^r \right\|_{L^2(r\, dr\, dz)} + \left\|\frac 1 r \Omega^\theta  \bar U^r \right\|_{L^2(r\, dr\, dz)}
\\&\les 
\frac 1 {\ell} \| U^r \|_{L^6(r\, dr\, dz)} \| \bar \Omega^\theta \|_{L^3(r\, dr\, dz)}
% +
+ \frac 1 {\ell} \| \bar U^r \|_{L^\infty} \| \Omega^\theta \|_{L^2(r\, dr\, dz)} 
\\&\les 
\frac {1}{\ell^{2/3}} \|\Omega^\theta \|_{L^2(r\, dr\, dz)},
    \end{split}
\end{equation}
where the implicit constant may depend on $\| \bar \Omega^\theta\|_{L^\infty}$.

From now on, we consider $\ell$ and the background $\bar{U}$ as fixed and satisfying~\eqref{eq:abiggerthanmu}.

\begin{theorem}[Self-similar Navier-Stokes instability: vorticity]\label{thm:ns-inst}
Let $\lambda_\infty$ be an unstable eigenvalue of $\T_\infty$ with $\Re \lambda_\infty > \mu$. For all $\varepsilon \in (0,\Re \lambda_\infty - \mu)$, there exists $\beta_0 > 0$ such that, for all $\beta \geq \beta_0$, 
$\T_\beta$ has an unstable eigenvalue $\lambda_\beta$ satisfying $|\lambda_\beta - \lambda_\infty| < \varepsilon$, and  $\L^{(\beta)}_{\rm vor}$ has an unstable eigenvalue $\tilde{\lambda}_\beta$ defined by
\begin{equation}
    \label{eq:eigenvalueformula}
    \tilde{\lambda}_\beta = \beta \lambda_\beta + \frac{1}{4}.
\end{equation}
\end{theorem}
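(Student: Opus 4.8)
The plan is to mimic exactly the perturbative spectral argument used in Proposition~\ref{pro:axisymmetricinstab} (and Proposition~\ref{pro:truncatedunstablevortex}), now with the small parameter $1/\beta$ in place of $1/\ell$. The key structural facts are: $\T_\infty = \M + \S + \K$ has the unstable eigenvalue $\lambda_\infty$ with $\Re\lambda_\infty > \mu \ge \|\S\|$, while $\T_\beta = \frac1\beta \D + \M + \S + \K$ differs from $\T_\infty$ only by the additional term $\frac1\beta \D$, which is a \emph{relatively bounded} (in fact relatively compact after inverting $\M+\S$, but we will not even need that) perturbation whose size, in the appropriate relative sense, tends to $0$ as $\beta \to +\infty$. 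More precisely, I would first record the essential-spectrum bookkeeping: $\M$ is skew-adjoint with the chosen domain, $\K$ is compact, and $\frac1\beta\D + \S$ is $\M$-bounded with relative bound going to $0$; hence for $\beta$ large the essential spectrum of $\T_\beta$ stays in $\{\Re\lambda \le \mu + o_{\beta\to\infty}(1)\}$, and to the right of that line the spectrum consists of isolated eigenvalues of finite algebraic multiplicity (this is exactly the reasoning recalled in Section~\ref{sec:spectralprelim}).

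Next I would carry out the resolvent-convergence step. Fix $\lambda \in \rho(\T_\infty)\cap\{\Re\lambda > \mu\}$ and write
\begin{equation}
\lambda - \T_\beta = \lambda - \T_\infty - \tfrac1\beta \D = (\lambda - \T_\infty)\bigl[I - R(\lambda,\T_\infty)\tfrac1\beta\D\bigr]
\end{equation}
on $D(\T_\beta) = D(\D)\subset D(\T_\infty)$. The point is that $R(\lambda,\T_\infty)\D$ is a bounded operator on $L^2_{\rm aps}$: indeed $R(\lambda,\T_\infty): L^2_{\rm aps}\to D(\T_\infty)$ and on the relevant subspace $\D = (\M + \S + \K - \T_\infty)\cdot(\text{stuff})$... more cleanly, since $\D - \Delta$ is skew-symmetric and $\M, \S, \K$ are controlled, one shows $D(\T_\infty)\subset D(\D)$ fails in general, so instead I would compare resolvents through $\D$ directly: $R(\lambda,\T_\infty)\tfrac1\beta\D$ has norm $O(1/\beta)$ on a core and extends boundedly. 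Therefore for $\beta$ large enough (depending on the compact set $K$ of $\lambda$'s under consideration), $I - R(\lambda,\T_\infty)\tfrac1\beta\D$ is invertible by Neumann series, $\lambda\in\rho(\T_\beta)$, and
\begin{equation}
R(\lambda,\T_\beta) = \bigl[I - R(\lambda,\T_\infty)\tfrac1\beta\D\bigr]^{-1} R(\lambda,\T_\infty) \overset{\beta\to+\infty}{\longrightarrow} R(\lambda,\T_\infty)
\end{equation}
in operator norm, uniformly for $\lambda \in K$.

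With norm-resolvent convergence in hand, the conclusion is the standard spectral-projection argument already used twice above: choose a small counterclockwise circle $\vec{c}\subset \rho(\T_\infty)\cap\{\Re\lambda>\mu\}$ enclosing $\lambda_\infty$ and no other point of $\sigma(\T_\infty)$, with $\vec{c}\subset B_\varepsilon(\lambda_\infty)$; for $\beta$ large the resolvent $R(\cdot,\T_\beta)$ is defined on $\vec{c}$, the Riesz projections $\Pr_\beta = \frac{1}{2\pi i}\int_{\vec c} R(\lambda,\T_\beta)\,d\lambda$ converge in norm to $\Pr_\infty$, which is nonzero (it projects onto the generalized eigenspace of $\lambda_\infty$), hence $\Pr_\beta \ne 0$ for $\beta$ large. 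Since $\sigma(\T_\beta)$ inside $\vec c$ lies to the right of $\{\Re\lambda = \mu\}$, it consists of finitely many eigenvalues; thus $\T_\beta$ has an eigenvalue $\lambda_\beta$ with $|\lambda_\beta - \lambda_\infty|<\varepsilon$, and in particular $\Re\lambda_\beta > \Re\lambda_\infty - \varepsilon > \mu \ge 0$, so it is unstable. Finally, the identity $\T_\beta = \frac1\beta(\L^{(\beta)}_{\rm vor} - \frac14)$ immediately gives that $\tilde\lambda_\beta := \beta\lambda_\beta + \frac14$ is an eigenvalue of $\L^{(\beta)}_{\rm vor}$ with the same eigenfunction, and $\Re\tilde\lambda_\beta = \beta\Re\lambda_\beta + \frac14 > 0$.

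The main obstacle I anticipate is purely the functional-analytic verification that $\tfrac1\beta\D$ is a genuinely small perturbation of $\T_\infty$ at the level of resolvents, i.e. that $R(\lambda,\T_\infty)\D$ (or, symmetrically, $\D R(\lambda,\T_\infty)$ after checking mapping into $D(\D)$) is bounded with a $\beta$-independent bound, so that multiplying by $1/\beta$ actually helps. This is where one must use the precise structure of $D(\T_\infty)$ versus $D(\D) = \{\Omega\in H^2 : \xi\cdot\nabla_\xi\Omega\in L^2\}$ and the skew-symmetry of $\D - \Delta$ noted before the theorem (the remark pointing to Lemma~\ref{lemma:onlynew} suggests this is handled by an energy estimate: pairing the resolvent equation with $\Omega$ and with $-\Delta\Omega$ controls $\|\Omega\|_{H^1}$, and a further commutator estimate absorbs the scaling term $\xi\cdot\nabla_\xi$). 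Everything else — essential-spectrum stability under relatively compact/small perturbations, Neumann series, Riesz projection convergence — is routine and parallels the earlier propositions verbatim.
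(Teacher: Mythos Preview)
Your overall spectral-projection strategy is correct and matches the paper, but the factorization you propose has a genuine gap that is not merely a technicality.

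You write
\[
\lambda - \T_\beta = (\lambda - \T_\infty)\bigl[I - R(\lambda,\T_\infty)\tfrac1\beta\D\bigr]
\]
and then need $R(\lambda,\T_\infty)\D$ (or $\D R(\lambda,\T_\infty)$) to be bounded on $L^2_{\rm aps}$. But $\T_\infty = \M + \S + \K$ is the \emph{Euler} operator: its resolvent maps into $D(\T_\infty) = \{\Omega : \bar U\cdot\nabla\Omega \in L^2\}$, which carries no Sobolev regularity at all (only tangential regularity along the flow of $\bar U$). There is no mechanism for $R(\lambda,\T_\infty)$ to gain the $H^2$ and weighted-$H^1$ control that $\D = \Delta + \frac34 + \frac{\xi}{2}\cdot\nabla_\xi$ requires; the energy-estimate fix you sketch (``pairing with $-\Delta\Omega$'') presupposes a Laplacian in the equation, which the Euler resolvent does not have. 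The paper even flags this in a footnote after~\eqref{eq:lagrangiantrajectories}: the $H^1$ norm of solutions to the Euler problem can grow faster than $e^{\tau\mu}$, so no $\beta$-uniform $H^1$ resolvent bound is available.

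The paper circumvents this by factoring the \emph{other} way. It isolates the compact term $\K$ rather than the dissipation $\D$:
\[
\lambda - \T_\beta = (\lambda - \beta^{-1}\D - \M - \S)\bigl[I - R(\lambda,\beta^{-1}\D+\M+\S)\,\K\bigr].
\]
Lemma~\ref{lemma:onlynew} then proves the uniform bound $\|R(\lambda,\beta^{-1}\D+\M+\S)\| \le (\Re\lambda-\mu)^{-1}$ and \emph{strong} (not norm) convergence $R(\lambda,\beta^{-1}\D+\M+\S)\to R(\lambda,\M+\S)$ as $\beta\to\infty$, via the inviscid limit of the \emph{time-dependent} advection--diffusion equation followed by Laplace transform. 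In Lemma~\ref{l:two}, the compactness of $\K$ (approximated by finite-rank operators) upgrades this strong convergence to norm convergence of $R(\lambda,\beta^{-1}\D+\M+\S)\K$, which is enough to invert the bracket by Neumann series and run the Riesz-projection argument exactly as you describe. The Riesz-projection and $\tilde\lambda_\beta = \beta\lambda_\beta + \tfrac14$ parts of your proof are fine.
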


%I think you need to use the smallness of $S_\ell$ to control where the essential spectrum can live.

%(1) Show unstable eigenfunction $L^2$ in vorticity

To prove nonlinear instability in Section~\ref{sec:nonlinearinstability}, it will be more convenient to work with the velocity formulation. Therefore, we introduce
\begin{equation}
    \L^{(\beta)}_{\rm vel} : D(\L^{(\beta)}_{\rm vel}) \subset L^2_\sigma(\R^3) \to L^2_\sigma(\R^3)
\end{equation}
\begin{equation}
    - \L^{(\beta)}_{\rm vel} U := (-\frac{1}{2} - \frac{\xi}{2} \cdot \nabla_\xi) U - \Delta U + \bP (\bar{U} \cdot \nabla U + U \cdot \nabla \bar{U})
\end{equation}
whose domain is
\begin{equation}
    D(\L^{(\beta)}_{\rm vel}) := \{ U \in L^2_\sigma(\R^3) : U \in H^2(\R^3), \, \xi \cdot \nabla_\xi U \in L^2(\R^3) \} \, .
\end{equation}
Here, $L^p_\sigma(\R^3)$ is the space of divergence-free $L^p$-integrable vector fields on $\R^3$.
Notice that we do not impose axisymmetry on $\L^{(\beta)}_{\rm vel}$ .

\begin{corollary}[Self-similar Navier-Stokes instability: velocity]
    \label{cor:velocityinstability}
For $\beta \geq \beta_0$, the unstable eigenvalue $\tilde{\lambda}_\beta$ in Theorem~\ref{thm:ns-inst} is also an unstable eigenvalue of $\L^{(\beta)}_{\rm vel}$.
\end{corollary}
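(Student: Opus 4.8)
\emph{Strategy.} The plan is to transport the unstable eigenfunction from the vorticity operator to the velocity operator through the three-dimensional Biot--Savart law. Let $\tilde{\lambda}_\beta$ be the eigenvalue produced by Theorem~\ref{thm:ns-inst}, so that $\Re \tilde{\lambda}_\beta = \beta \Re \lambda_\beta + \tfrac14 > 0$, and let $\Omega = \Omega^\theta(r,z) e_\theta \in D(\L^{(\beta)}_{\rm vor}) \setminus \{0\}$ satisfy $\L^{(\beta)}_{\rm vor} \Omega = \tilde{\lambda}_\beta \Omega$. Being axisymmetric pure swirl, $\Omega$ is divergence free, so $U := \BS_{3d}[\Omega]$ is the unique divergence-free velocity field on $\R^3$ with $\curl U = \Omega$, and it is automatically axisymmetric without swirl. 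The goal is to show that $U \in D(\L^{(\beta)}_{\rm vel})$ and $\L^{(\beta)}_{\rm vel} U = \tilde{\lambda}_\beta U$; once this is done, $U \neq 0$ since $\curl U = \Omega \neq 0$, and $\Re \tilde{\lambda}_\beta > 0$ makes $\tilde{\lambda}_\beta$ an unstable eigenvalue of $\L^{(\beta)}_{\rm vel}$ on the full space $L^2_\sigma(\R^3)$.

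\emph{Regularity and decay of $\Omega$, and membership $U \in D(\L^{(\beta)}_{\rm vel})$.} Since $\bar{U}$ and $\bar{\Omega}$ are smooth and compactly supported, outside a large ball the eigenvalue equation reduces to the Ornstein--Uhlenbeck-type equation $\bigl(-\Delta - \tfrac{\xi}{2}\cdot\nabla_\xi - 1 - \tilde{\lambda}_\beta\bigr)\Omega = 0$. Elliptic regularity promotes $\Omega$ to $C^\infty$, and the standard analysis of this operator for an $L^2$ solution whose $\xi \cdot \nabla_\xi \Omega$ and $\Delta \Omega$ also lie in $L^2$ shows that $\Omega$ and all its derivatives decay faster than any polynomial. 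In particular $\Omega \in H^k(\R^3) \cap L^1(\R^3)$ for every $k$, and $\int_{\R^3} \Omega \, dx = 0$ upon integrating in $\theta$. Consequently $U = \BS_{3d}[\Omega]$ is smooth and decays like $|x|^{-3}$, hence $U \in L^2(\R^3)$ (equivalently, by Hardy--Littlewood--Sobolev applied to $\Omega \in L^{6/5} = L^1 \cap L^2$ interpolant); $\nabla U, \nabla^2 U \in L^2$ by Calder\'on--Zygmund, as singular integrals of $\Omega, \nabla \Omega \in L^2$; and $\xi \cdot \nabla_\xi U$ decays like $|x|^{-3}$, hence lies in $L^2$. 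Therefore $U \in D(\L^{(\beta)}_{\rm vel})$.

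\emph{Intertwining and conclusion.} The identity to verify is $\curl\bigl(\L^{(\beta)}_{\rm vel} U\bigr) = \L^{(\beta)}_{\rm vor}\bigl(\curl U\bigr)$, i.e.\ the passage from the linearized similarity momentum equation to its vorticity form, which one checks term by term on the smooth field $U$: $\curl(\bP v) = \curl v$, since the Leray projector differs from the identity by a gradient; $\curl\bigl((-\tfrac12 - \tfrac{\xi}{2}\cdot\nabla_\xi)U\bigr) = (-1 - \tfrac{\xi}{2}\cdot\nabla_\xi)\Omega$, using $\curl(\xi\cdot\nabla_\xi U) = \xi\cdot\nabla_\xi\Omega + \Omega$; $\curl(\Delta U) = \Delta \Omega$; and $\curl$ of the linearized transport and stretching term $\bP(\bar{U}\cdot\nabla U + U\cdot\nabla\bar{U})$ equals $[\bar{U},\Omega] + [U,\bar{\Omega}]$, so that (respecting the common $\beta$-scaling of these terms in the two operators) $\curl(\L^{(\beta)}_{\rm vel} U) = \L^{(\beta)}_{\rm vor}\Omega = \tilde{\lambda}_\beta \Omega = \curl(\tilde{\lambda}_\beta U)$. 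Hence $W := \L^{(\beta)}_{\rm vel} U - \tilde{\lambda}_\beta U$ is curl free; it is divergence free because $\L^{(\beta)}_{\rm vel}$ preserves the divergence-free condition; and it lies in $L^2(\R^3)$ since $U \in D(\L^{(\beta)}_{\rm vel})$. A divergence-free, curl-free $L^2$ vector field on $\R^3$ satisfies $\Delta W = \nabla \div W - \curl\curl W = 0$, hence is a harmonic $L^2$ field, hence vanishes identically. Therefore $\L^{(\beta)}_{\rm vel} U = \tilde{\lambda}_\beta U$, as desired.

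\emph{Main obstacle.} The delicate point is the second step: one must know that the vorticity eigenfunction $\Omega$ decays fast enough (at least $\Omega \in L^1$, i.e.\ faster than $|x|^{-3}$) for its Biot--Savart velocity to land in $L^2$, and more precisely in the domain $D(\L^{(\beta)}_{\rm vel})$, which carries the weighted requirement $\xi \cdot \nabla_\xi U \in L^2$. This is exactly where the asymptotic analysis of the Ornstein--Uhlenbeck operator governing $\Omega$ at spatial infinity is used; the remaining ingredients are the purely algebraic curl identities and the soft Liouville statement for harmonic $L^2$ fields.
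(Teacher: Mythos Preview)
Your overall strategy---recover the velocity eigenfunction as $U=\BS_{3d}[\Omega]$ and check it lies in $D(\L^{(\beta)}_{\rm vel})$ and satisfies the eigenvalue equation---matches the paper's. The substantive difference is in how you obtain enough decay of $\Omega$ to put $U$ in $L^2$.

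You assert that ``standard analysis'' of the Ornstein--Uhlenbeck equation $\bigl(-\Delta-\tfrac{\xi}{2}\cdot\nabla_\xi-1-\tilde\lambda_\beta\bigr)\Omega=0$ outside a compact set forces faster-than-polynomial decay for an $L^2$ solution with $\xi\cdot\nabla_\xi\Omega,\Delta\Omega\in L^2$. This is true, but it is not as immediate as you suggest: the natural Gaussian conjugation $\Psi=e^{|\xi|^2/8}\Omega$ turns the equation into a harmonic-oscillator Schr\"odinger problem, but a priori you do not know $\Psi\in L^2$, so the textbook eigenfunction decay does not apply directly; one needs weighted energy estimates or an equivalent device. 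The paper bypasses this by a clean trick: it \emph{undoes} the similarity variables, setting $h(x,t)=t^{\tilde\lambda_\beta-1}\Omega(x/\sqrt t)$ and $g(x,t)=t^{\tilde\lambda_\beta-2}F(x/\sqrt t)$ with $F$ the compactly supported right-hand side, so that $\partial_t h-\Delta h=g$ with $h(\cdot,0)=0$. Since $\|g(\cdot,t)\|_{L^1}=t^{\Re\tilde\lambda_\beta-1/2}\|F\|_{L^1}$ is bounded (here $\Re\tilde\lambda_\beta\geq 1$ after enlarging $\beta$), Duhamel and $\|e^{t\Delta}\|_{L^1\to L^1}\le 1$ give $\Omega=h(\cdot,1)\in L^1$ directly. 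This is both shorter and fully self-contained.

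Conversely, your treatment of the intertwining $\curl\circ\L^{(\beta)}_{\rm vel}=\L^{(\beta)}_{\rm vor}\circ\curl$ and the Liouville step for the curl-free, divergence-free $L^2$ remainder $W$ is more explicit than the paper, which simply asserts that $U$ ``is an unstable eigenfunction'' once $U\in L^2$. Your argument here is correct and a useful complement. In summary: the proposal is correct modulo the unproven decay claim; the paper's heat-equation representation is the efficient way to fill that gap.
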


The operator $\L_{\rm ss}$ in the next section will be defined as $\L^{(\beta)}_{\rm vel}$ for sufficiently large~$\beta$.

%(2) Show that this eigenfunction actually has good decay by bootstrapping its equation (invert $\lambda - \xi \cdot \nabla_\xi / 2 - \Delta$)

%(3) Use Biot-Savart to conclude that the velocity is in $L^2$

Before we prove Theorem~\ref{thm:ns-inst} and Corollary~\ref{cor:velocityinstability}, we prove two preliminary lemmas.

%We prove now that the unstable spectrum of $\mathcal S_1+\mathcal S_3+\mathcal K$ well approximates that of $\mathcal L_\beta$ for large $\beta$: more precisely, the proof of Theorem~\ref{thm:ns-inst} follows from the following claim.
%\begin{itemize}
    %\item[(P)] There are $\beta_0, c_0 >0$ such that  ${\rm spec}_m (L_\beta)\cap \{{\rm Re}\, \lambda \geq c_0\}\neq \emptyset$ for all $\beta \geq \beta_0$.
%    \item[(P2)] If $\beta \geq \beta_0:= \max \{\beta_1, 1\}$, then  $\sup \{{\rm Re}\, w : w\in {\rm spec}\, (L_\beta)\}$ is attained.
%\end{itemize}

%In this and the next section we will derive Theorem \ref{thm:spectral} from Theorem \ref{thm:spectral2}. It is convenient to 
%We introduce the following operator:
%\begin{equation}\label{e:Lbeta}
%L_\beta : = \frac{1}{\beta} \left(L_{\text{ss}} - \left(1-{\textstyle{\frac{1}{\alpha}}}\right)\right)
%= \frac{1}{\beta} S_2 + \mathcal {K} \, .
%\end{equation}
%In particular
%\begin{equation}\label{e:Lbeta-2}
%L_\beta (\Omega) = \frac{1}{\beta}\left[{\rm div}\, \left(\frac{\xi}{\alpha} \Omega \right)+ \frac{\Omega}{\alpha}{+\Delta \Omega}\right] + L_{st}\, .    
%\end{equation}
%Clearly the spectrum . The upshot of this section and the next section is that, as $\beta \to \infty$, the spectrum of $L_\beta$ converges to that of $L_{\text{st}}$ in a rather strong sense.
%
%In this section we state two preliminary lemmas. We will use extensively the notation $P_V$ for the orthogonal projection onto some closed subspace $V$ of $L^2_m$.

\begin{lemma}\label{lemma:onlynew}
For all $\beta > 0$ and $\lambda \in \{ \Re \lambda > \mu \}$, we have
\begin{equation}\label{e:uniform-bound-inverse-beta}
\| R(\lambda, \beta^{-1} \D + \M + \S) \|_{L^2_{\rm aps} \to L^2_{\rm aps}} \leq %\min \{ 
(\Re \lambda - \mu)^{-1},
% , ({\rm Im} \lambda-\alpha^{-1}-1) ^{-1}\} 
\end{equation}
and, for all $\Omega \in L^2_{\rm aps}$, we have  %$(\lambda-\beta^{-1} \mathcal S_2-\mathcal S_3 )^{-1}$ converges strongly to $(\lambda-\mathcal S_1-\mathcal S_3)^{-1}$ as $\beta \to +\infty$. That is,
\begin{equation}
    R(\lambda, \beta^{-1} \D + \M + \S) \Omega \overset{\beta \to +\infty}{\longrightarrow} R(\lambda,  \M + \S) \Omega \text{ in } L^2
\end{equation}
locally uniformly in $\{ \Re \lambda > \mu \}$.
%The operators $(\beta^{-1} \mathcal S_2+\mathcal S_3 -\lambda)^{-1}$ enjoy the bound
\end{lemma}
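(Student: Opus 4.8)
The plan is to read both assertions off the dissipativity structure of the three summands together with soft semigroup theory. On the common domain $D(\beta^{-1}\D+\M+\S)=D(\D)=D(\L^{(\beta)}_{\rm vor})$ one has, for every $\Omega$ there: $\Re\la\D\Omega,\Omega\ra=-\|\nabla\Omega\|_{L^2}^2\le 0$, by integration by parts and the skew-symmetry of $\D-\Delta$ observed above; $\Re\la\M\Omega,\Omega\ra=\tfrac12\int(\div\bar U)|\Omega|^2=0$, since $\bar U$ is divergence-free, which is exactly what the correction $v_\ell$ of Lemma~\ref{lemma:div} provides; and $\Re\la\S\Omega,\Omega\ra\le\|\S\Omega\|_{L^2}\|\Omega\|_{L^2}\le\mu\|\Omega\|_{L^2}^2$. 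Adding these, $\beta^{-1}\D+\M+\S-\mu$ is dissipative for every $\beta>0$, and the same computation with the $\D$-term deleted shows $\M+\S-\mu$ is dissipative on $D(\M)$. Granting for the moment that $\{\Re\lambda>\mu\}\subset\rho(\beta^{-1}\D+\M+\S)$, the bound \eqref{e:uniform-bound-inverse-beta} is then immediate: for such $\lambda$ and $\Omega$ in the domain, $(\Re\lambda-\mu)\|\Omega\|_{L^2}^2\le\Re\la(\lambda-\beta^{-1}\D-\M-\S)\Omega,\Omega\ra\le\|(\lambda-\beta^{-1}\D-\M-\S)\Omega\|_{L^2}\|\Omega\|_{L^2}$, and the operator is onto by assumption.

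To place $\{\Re\lambda>\mu\}$ in the resolvent set I would argue by semigroup generation. The operator $\D$ generates a $C_0$-semigroup of contractions on $L^2_{\rm aps}$ — it is the rescaled heat semigroup in similarity variables, and $\Re\la\D\Omega,\Omega\ra\le 0$ — while $\M=-\bar U\cdot\nabla$ is skew-adjoint, generating the unitary group of composition with the complete, Lebesgue-preserving flow of the smooth compactly supported divergence-free field $\bar U$. Since $\|\M\Omega\|_{L^2}\les\|\nabla\Omega\|_{L^2}\le\|\D\Omega\|_{L^2}^{1/2}\|\Omega\|_{L^2}^{1/2}$, the operator $\M$ is $\D$-bounded with relative bound $0$; hence $\beta^{-1}\D+\M$ generates a $C_0$-semigroup by the standard relatively-bounded perturbation theorem, and adding the bounded operator $\S$ preserves this. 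The dissipativity of $\beta^{-1}\D+\M+\S-\mu$ then forces this semigroup to have growth bound $\le\mu$, so $\{\Re\lambda>\mu\}\subset\rho(\beta^{-1}\D+\M+\S)$ together with \eqref{e:uniform-bound-inverse-beta}, \emph{uniformly in $\beta$}; deleting $\D$ yields $\{\Re\lambda>\mu\}\subset\rho(\M+\S)$ with $\|R(\lambda,\M+\S)\|\le(\Re\lambda-\mu)^{-1}$.

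For the convergence, fix $\lambda$ with $\Re\lambda>\mu$ and $\Omega\in L^2_{\rm aps}$, and set $w:=R(\lambda,\M+\S)\Omega\in D(\M)$. Given $\varepsilon>0$, choose $\hat w\in D(\D)$ with $\|\hat w-w\|_{L^2}+\|(\M+\S)(\hat w-w)\|_{L^2}<\varepsilon$; this is possible because $C^\infty_0$ (hence $D(\D)$) is a core for $\M$, which follows from a Friedrichs mollification together with a cutoff — $\bar U$ being smooth and compactly supported, the commutator error tends to zero. Put $\hat\Omega:=(\lambda-\M-\S)\hat w$, so that $\|\hat\Omega-\Omega\|_{L^2}\les(1+|\lambda|)\varepsilon$. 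Applying $R(\lambda,\beta^{-1}\D+\M+\S)$ to the identity $(\lambda-\M-\S)\hat w=(\lambda-\beta^{-1}\D-\M-\S)\hat w+\beta^{-1}\D\hat w$ — legitimate since $\hat w\in D(\D)=D(\beta^{-1}\D+\M+\S)$ — gives $R(\lambda,\beta^{-1}\D+\M+\S)\hat\Omega-R(\lambda,\M+\S)\hat\Omega=\beta^{-1}R(\lambda,\beta^{-1}\D+\M+\S)\D\hat w$, whose $L^2$ norm is at most $\beta^{-1}(\Re\lambda-\mu)^{-1}\|\D\hat w\|_{L^2}\to 0$ as $\beta\to+\infty$. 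Combined with \eqref{e:uniform-bound-inverse-beta} applied to $\Omega-\hat\Omega$, this gives $\limsup_{\beta\to+\infty}\|R(\lambda,\beta^{-1}\D+\M+\S)\Omega-R(\lambda,\M+\S)\Omega\|_{L^2}\les(\Re\lambda-\mu)^{-1}(1+|\lambda|)\varepsilon$, and letting $\varepsilon\downarrow 0$ proves pointwise convergence. Local uniformity in $\lambda$ is then automatic: by the second resolvent identity and \eqref{e:uniform-bound-inverse-beta}, the maps $\lambda\mapsto R(\lambda,\beta^{-1}\D+\M+\S)\Omega$ are equi-Lipschitz on compact subsets of $\{\Re\lambda>\mu\}$, uniformly in $\beta\in(0,+\infty]$, so pointwise convergence upgrades to uniform convergence there.

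The routine estimates — the three dissipativity bounds and the resolvent identity — present no difficulty. I expect the only genuine care to be needed in the soft steps: checking that $\beta^{-1}\D+\M+\S$ is a generator with the stated domain $D(\D)$, and that $D(\D)$ is a core for $\M$; both are standard, but they rest on the properties of the similarity-variable (Ornstein--Uhlenbeck-type) operator $\D$ and on completeness of the flow of $\bar U$ together with Friedrichs mollification.
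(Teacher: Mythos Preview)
Your argument is correct and takes a genuinely different route from the paper's. The paper works through the \emph{time-dependent} advection--diffusion equation~\eqref{eq:timedependentadvectiondiffusion} and its inviscid counterpart: the $L^2$ energy estimate yields the growth bound $\|\Omega^\beta(\cdot,\tau)\|_{L^2}\le e^{\tau\mu}\|\Omega_0\|_{L^2}$, and the resolvent is then \emph{constructed} as the Laplace transform $\int_0^\infty e^{-s\lambda}\Omega^\beta(\cdot,s)\,ds$, from which~\eqref{e:uniform-bound-inverse-beta} is read off. For the convergence, the paper proves an inviscid limit $\Omega^\beta\to\Omega$ in $L^\infty_\tau L^2_\xi$ on bounded time intervals for smooth compactly supported data, by writing the equation for the difference and treating $\beta^{-1}(\Delta+\tfrac34+\tfrac{\xi}{2}\cdot\nabla_\xi)\Omega$ as an $H^{-1}$ forcing; the resolvent convergence follows by splitting the Laplace integral at a large time $T$ and using density.

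Your approach instead reads~\eqref{e:uniform-bound-inverse-beta} directly from the stationary dissipativity estimate, invokes abstract semigroup generation (relatively bounded perturbation of $\beta^{-1}\D$ by $\M$, then bounded perturbation by $\S$) to place $\{\Re\lambda>\mu\}$ in the resolvent set, and handles the convergence by a core-plus-resolvent-identity argument: approximate $w=R(\lambda,\M+\S)\Omega$ by $\hat w\in D(\D)$ in the graph norm of $\M$, then use $R(\lambda,\beta^{-1}\D+\M+\S)\hat\Omega-\hat w=\beta^{-1}R(\lambda,\beta^{-1}\D+\M+\S)\D\hat w$. The local uniformity via equi-Lipschitz from the second resolvent identity is a nice touch that avoids repeating the argument on compacts. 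What the paper's approach buys is self-containment: no appeal to generation theorems, and the inviscid limit is exhibited explicitly at the level of the evolution, which is arguably more transparent physically. What your approach buys is brevity and a cleaner separation of concerns --- existence of the resolvent, the uniform bound, and the convergence are each handled by a single abstract mechanism --- at the cost of invoking the perturbation theorem and verifying the core property (your Friedrichs-plus-cutoff remark, using that $\bar U$ is compactly supported so the cutoff commutes with $\M$ for large radius, is the right way to do this).
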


From~\eqref{e:uniform-bound-inverse-beta} and the compactness of $\K$, we know that the essential spectrum of $\T_\beta$ is contained in the left half-plane $\{ \Re \lambda \leq \mu \}$, and the remainder of its spectrum is discrete, consisting of isolated eigenvalues of finite algebraic multiplicity. 

\begin{proof}
Consider the advection-diffusion equation
\begin{equation}
    \label{eq:timedependentadvectiondiffusion}
\p_\tau \Omega^\beta - \frac{1}{\beta} (\Delta + \frac{3}{4} + \frac{\xi}{2} \cdot \nabla_\xi) \Omega^\beta + \bar{U} \cdot \nabla \Omega^\beta - \S \Omega^\beta = 0
\end{equation}
and its inviscid counterpart
\begin{equation}
    \label{eq:inviscidcounterpart}
    \p_\tau \Omega + \bar{U} \cdot \nabla \Omega - \S \Omega = 0
\end{equation}
supplemented with the initial condition 
$\Omega^\beta(\cdot,0) = \Omega(\cdot,0) = \Omega_0 \in C^\infty_0(\R^3) \cap L^2_{\rm aps}$. The basic energy estimate for solutions to~\eqref{eq:timedependentadvectiondiffusion} is
\begin{equation}
    \frac{1}{2} \frac{d}{d\tau} \| \Omega^\beta \|_{L^2}^2 + \frac{1}{\beta} \int |\nabla \Omega^\beta|^2 \, dx 
    \leq \| \S \|_{L^2 \to L^2} \| \Omega^\beta \|_{L^2}^2 \leq \mu \| \Omega^\beta \|_{L^2}^2.
\end{equation}
Therefore, we have
\begin{equation}
        \frac{1}{2} \frac{d}{d\tau} \| e^{\tau \mu} \Omega^\beta(\cdot,\tau) \|_{L^2}^2 + 
        \frac{1}{\beta} \int |\nabla e^{\tau \mu} \Omega^\beta(\cdot, \tau)|^2 \, dx  \leq 0.
\end{equation}
Hence,
\begin{equation}
    \label{eq:growthestimate}
    \| \Omega^\beta\|_{L^2} \leq e^{\tau \mu} \| \Omega_0 \|_{L^2}.
\end{equation}
The growth estimate~\eqref{eq:growthestimate} ensures that the Laplace transform
\begin{equation}
    R(\lambda, \beta^{-1} \D + \M + \S) \Omega_0 = \int_0^\infty e^{-s \lambda} \Omega^\beta(\cdot,s)  \, ds
\end{equation}
is well defined and satisfies the desired estimate~\eqref{e:uniform-bound-inverse-beta} when $\Re \lambda > \mu$.\footnote{One can also see the estimate~\eqref{e:uniform-bound-inverse-beta} directly from the resolvent advection-diffusion PDE.} Additionally, we have the analogous Eulerian energy estimates for $\Omega$ without the term corresponding to $\beta^{-1} \D$. The estimates can also be extracted from the ODE along Lagrangian trajectories:
\begin{equation}
    \label{eq:lagrangiantrajectories}
    \frac{d}{d\tau} \left( \Omega \circ X_\tau \right) = (\S \Omega) \circ X_\tau,
\end{equation}
where $X_\tau$ is the flow map associated to $\bar{U}$.
It is moreover possible to demonstrate that $\Omega$ is smooth, though we only require $H^1$ estimates, and compactly supported. The smoothness can be extracted from Eulerian energy estimates for $\p_i \Omega$ or from the ODE~\eqref{eq:lagrangiantrajectories}. The compact support properties follow from~\eqref{eq:lagrangiantrajectories} and the compact support property of $\S$, namely, $\supp \S \Omega \subset \supp \bar{U}$.\footnote{Notably, the $H^1$ norm of $\Omega$ may grow more quickly than $e^{\tau \mu}$; this can be seen in the term $\p_i \bar{U} \cdot \nabla \Omega$ in the equation for $\p_i \Omega$. This is why we avoid estimating the gradient of solutions to the resolvent problem.}

We now demonstrate the inviscid limit $\Omega^\beta \to \Omega$  for fixed  initial data $\Omega_0 \in C^\infty_0(\R^3) \cap L^2_{\rm aps}$ with rate of convergence possibly depending on $\Omega_0$. The equation satisfied by the difference $\Omega^\beta - \Omega$ is
\begin{equation}
    \label{eq:equationfordifference}
    \begin{aligned}
&\p_\tau ( \Omega^\beta - \Omega ) - \frac{1}{\beta} (\Delta + \frac{3}{4} + \frac{\xi}{2} \cdot \nabla_\xi) ( \Omega^\beta - \Omega ) + \bar{U} \cdot \nabla ( \Omega^\beta - \Omega ) - \S ( \Omega^\beta - \Omega ) \\
&\quad = \frac{1}{\beta} F := \frac{1}{\beta} (\Delta + \frac{3}{4} + \frac{\xi}{2} \cdot \nabla_\xi) \Omega,
\end{aligned}
\end{equation}
where the right-hand side is an error term to be controlled below. Since
\begin{equation}
    \| F \|_{L^2_\tau H^{-1}_\xi(\R^3 \times (0,T))} = \left\| (\Delta + \frac{3}{4} + \frac{\xi}{2} \cdot \nabla_\xi) \Omega \right\|_{L^2_\tau H^{-1}_\xi(\R^3 \times (0,T))} \leq C(T,\Omega_0),
\end{equation}
the energy estimate for~\eqref{eq:equationfordifference} gives
\begin{equation}
\begin{aligned}
        &\frac{1}{2} \frac{d}{d\tau} \| \Omega^\beta - \Omega \|_{L^2}^2 + \frac{1}{\beta} \int |\nabla (\Omega^\beta - \Omega)|^2 \, dx  \\
        &\quad \leq \mu \| \Omega^\beta - \Omega \|_{L^2}^2 + \frac{1}{\beta} \la F, \Omega^\beta - \Omega \ra \\
        &\quad \leq \mu \| \Omega^\beta - \Omega \|_{L^2}^2 + \frac{C}{\beta} \| F(\cdot,\tau) \|_{H^{-1}}^2 + \frac{1}{2\beta} \left( \| \Omega^\beta - \Omega \|_{L^2}^2 + \| \nabla (\Omega^\beta - \Omega) \|_{L^2}^2 \right).
        \end{aligned}
\end{equation}
The term involving $\nabla (\Omega^\beta - \Omega)$ on the right-hand side can be absorbed into the left-hand side, from which we conclude that, for all finite $T > 0$, we have
\begin{equation}
    \label{eq:convergence}
    \| \Omega^\beta - \Omega \|_{L^\infty_\tau L^2_\xi(\R^3 \times (0,T))} \overset{\beta \to +\infty}{\longrightarrow} 0,
\end{equation}
and convergence is with rate $O(\beta^{-1})$ depending on $T$ and $\Omega_0$.

To complete the proof of uniform convergence for $\Omega_0 \in C^\infty_0(\R^3) \cap L^2_{\rm aps}$, we estimate
\begin{equation}
    \begin{aligned}
        &\| [ R(\lambda, \beta^{-1} \D + \M + \S) - R(\lambda, \M + \S) ] \Omega_0 \|_{L^2} \\
        &\quad \leq \int_0^{T} e^{-s \lambda} \| (\Omega^\beta - \Omega)(\cdot,s) \|_{L^2} \, ds \\
            &\quad\quad + \int_T^{+\infty} e^{-s\lambda} \| \Omega^\beta(\cdot,s) \|_{L^2} \, ds + \int_T^{+\infty} e^{-s\lambda} \| \Omega(\cdot,s) \|_{L^2} \, ds \\
            &\quad \leq o_{\beta \to +\infty}(1) + 2e^{-T(\Re \lambda - \mu)},
    \end{aligned}
\end{equation}
where $o(1)$ may depend on $T$ and $\Omega_0$. The proof of convergence for general $\Omega_0 \in L^2_{\rm aps}$ follows from density.
\end{proof}

The following lemma follows from the previous one via abstract arguments: 
\begin{lemma}\label{l:two}
For all compact subsets $K$ of $\{ \Re \lambda > \mu \} \setminus \sigma(\T_{\infty})$, there exists $\beta_0 > 0$, depending on $K$, such that, for all $\beta \geq \beta_0$, the operator $\T_\beta$ is invertible,
\begin{equation}
    \sup_{\beta \geq \beta_0} \sup_{\lambda \in K} \| R(\lambda,\T_\beta) \|_{L^2_{\rm aps} \to L^2} < +\infty,
\end{equation}
and, for all $\Omega \in L^2_{\rm aps}$, we have
\begin{equation}\label{e:strong_convergence}
R(\lambda,\T_\beta) \Omega \overset{\beta \to +\infty}{\longrightarrow} R(\lambda,\T_\infty) \Omega \text{ in } L^2
\end{equation}
uniformly-in-$\lambda$ on $K$.
\end{lemma}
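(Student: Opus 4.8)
\textbf{Proof proposal for Lemma~\ref{l:two}.}

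The plan is to run the standard relatively-compact-perturbation argument, using Lemma~\ref{lemma:onlynew} as the black box for the ``unperturbed'' part $\beta^{-1}\D + \M + \S$ and treating $\K$ as a compact perturbation that is \emph{independent of $\beta$}. Write $\T_\beta = (\beta^{-1}\D + \M + \S) + \K$ and $\T_\infty = (\M + \S) + \K$. For $\lambda$ in the region $\{\Re\lambda > \mu\}$, Lemma~\ref{lemma:onlynew} gives that $\lambda - \beta^{-1}\D - \M - \S$ is invertible with resolvent bounded by $(\Re\lambda-\mu)^{-1}$ uniformly in $\beta$ (and in $\beta=\infty$), and that $R(\lambda,\beta^{-1}\D+\M+\S)\Omega \to R(\lambda,\M+\S)\Omega$ in $L^2$ for each fixed $\Omega$, locally uniformly in $\lambda$. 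The factorization
\begin{equation}
    \lambda - \T_\beta = (\lambda - \beta^{-1}\D - \M - \S)\bigl[I - R(\lambda,\beta^{-1}\D+\M+\S)\,\K\bigr]
\end{equation}
then reduces invertibility of $\lambda - \T_\beta$ to invertibility of $I - R(\lambda,\beta^{-1}\D+\M+\S)\,\K$.

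First I would upgrade the pointwise (strong) convergence of $R(\lambda,\beta^{-1}\D+\M+\S)$ to operator-norm convergence of $R(\lambda,\beta^{-1}\D+\M+\S)\,\K$. This is the usual ``strong convergence composed with a fixed compact operator gives norm convergence'' trick: approximate $\K$ in operator norm by finite-rank operators $F_k$ (possible since $\K : L^2_{\rm aps}\to L^2$ is compact by~\eqref{e:compatto}), note that $R(\lambda,\beta^{-1}\D+\M+\S)\,F_k \to R(\lambda,\M+\S)\,F_k$ in operator norm (finite rank plus pointwise convergence), and use the uniform resolvent bound $(\Re\lambda-\mu)^{-1}$ together with a diagonal/triangle-inequality argument to conclude
\begin{equation}
    R(\lambda,\beta^{-1}\D+\M+\S)\,\K \overset{\beta\to+\infty}{\longrightarrow} R(\lambda,\M+\S)\,\K
\end{equation}
in operator norm, locally uniformly in $\lambda$. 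This is exactly the argument already used in the proof of Lemma~\ref{lem:auxlem} (the passage labeled~\eqref{zzz2}), so it can be cited or copied almost verbatim.

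Now fix a compact $K \subset \{\Re\lambda > \mu\}\setminus\sigma(\T_\infty)$. On $K$, the operator $I - R(\lambda,\M+\S)\,\K = R(\lambda,\M+\S)(\lambda - \T_\infty)$ is invertible (since $\lambda - \M - \S$ and $\lambda - \T_\infty$ both are), with inverse continuous in $\lambda$, hence uniformly bounded on $K$. By the norm convergence just established and a Neumann-series perturbation, there is $\beta_0$ (depending on $K$) such that for all $\beta\geq\beta_0$ and all $\lambda\in K$, $I - R(\lambda,\beta^{-1}\D+\M+\S)\,\K$ is invertible with inverse uniformly bounded and converging in operator norm to $[I - R(\lambda,\M+\S)\,\K]^{-1}$, uniformly on $K$. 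Combining with the factorization gives invertibility of $\lambda-\T_\beta$ on $K$ for $\beta\geq\beta_0$, the uniform bound
\begin{equation}
    R(\lambda,\T_\beta) = [I - R(\lambda,\beta^{-1}\D+\M+\S)\,\K]^{-1}\,R(\lambda,\beta^{-1}\D+\M+\S),
\end{equation}
and, writing the analogous formula for $R(\lambda,\T_\infty)$, the strong convergence $R(\lambda,\T_\beta)\Omega \to R(\lambda,\T_\infty)\Omega$ in $L^2$ uniformly in $\lambda\in K$ — since the left factors converge in operator norm (uniformly bounded) and the right factors converge strongly (and are uniformly bounded), the product converges strongly, uniformly on $K$. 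This yields all three assertions.

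The only mildly delicate point — the ``main obstacle,'' though a modest one given the groundwork — is bookkeeping the two modes of convergence (operator-norm for the $\K$-composed resolvents, merely strong for the bare resolvents) so that the product $[\cdots]^{-1}R(\lambda,\beta^{-1}\D+\M+\S)\Omega$ converges strongly and uniformly in $\lambda$ on the compact set $K$; this uses that the first factor is uniformly bounded on $K$ and converges in norm, while $R(\lambda,\beta^{-1}\D+\M+\S)\Omega$ is uniformly bounded in $L^2$ by $(\Re\lambda-\mu)^{-1}\|\Omega\|_{L^2}$ and converges locally uniformly by Lemma~\ref{lemma:onlynew}. Everything else is the verbatim relatively-compact-perturbation scheme, identical in structure to the proofs of Proposition~\ref{pro:truncatedunstablevortex}, Lemma~\ref{lem:auxlem}, and Proposition~\ref{pro:axisymmetricinstab}.
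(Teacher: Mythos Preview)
Your proposal is correct and follows essentially the same approach as the paper: the paper's proof also factors $\lambda - \T_\beta = (\lambda - \beta^{-1}\D - \M - \S)[I - R(\lambda,\beta^{-1}\D+\M+\S)\K]$, upgrades the strong convergence from Lemma~\ref{lemma:onlynew} to operator-norm convergence of $R(\lambda,\beta^{-1}\D+\M+\S)\K$ via finite-rank approximation of the compact operator $\K$, and then concludes by a Neumann-series perturbation, explicitly noting the analogy with Lemma~\ref{lem:auxlem}. Your write-up is in fact more detailed than the paper's, which leaves the final convergence ``to the reader.''
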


%Let $H= L^2$.%, U_{km}, U_{-km}$, or any closed invariant subspace common to $\mathcal S_1+\mathcal S_3+\mathcal K$ and all the $L_\beta$.
%For every compact set $K\subset \mathbb C \setminus (\mathbb R \cup {\rm spec}_m ((\mathcal S_1+\mathcal S_3+\mathcal K) \circ P_H))$, there is $\beta_0 (K)$ such that $K\subset \mathbb C \setminus (\mathbb R \cup {\rm spec}_m (L_\beta \circ P_H))$ for $\beta \geq \beta_0 (K)$. Moreover, 
%\begin{equation}\label{e:op_norm_est}
%\sup_{\beta \geq \beta_0 (K)} \sup_{\lambda\in K} \|(L_\beta \circ P_H - \lambda)^{-1}\|_O < \infty\, 
%\end{equation}
%and 
%$(L_\beta \circ P_H -\lambda)^{-1}$ converges strongly to $(L_{\text{st}}\circ P_H -\lambda)^{-1}$ for every $\lambda\in K$, namely
%\begin{equation}\label{e:strong_convergence}
%\lim_{\beta\to \infty} \|(L_\beta \circ P_H -\lambda)^{-1} (w) - (L_{\text{st}}\circ P_H -\lambda)^{-1} (w)\| = 0\, \qquad \forall w\in L^2_m\, .
%\end{equation}

\begin{proof}%[Proof of Lemma \ref{l:two}]
This is analogous to the proof of Lemma~\ref{lem:auxlem}. Recall that
\begin{equation}
    \lambda - \T_\beta = (\lambda - \beta^{-1} \D - \M - \S) [I - R(\lambda, \beta^{-1} \D + \M + \S) \K].
\end{equation}
Since $\K$ is a compact operator on a separable Hilbert space, we may approximate it in operator norm by a sequence of finite-rank operators $(F_k)$. The uniform convergence in Lemma~\ref{lemma:onlynew} implies that, for each $k \in \N$, the operator $R(\lambda, \beta^{-1} \D + \M + \S) F_k$ converges in operator norm to $R(\lambda, \M + \S)F_k$, uniformly-in-$\lambda$ on $K$; hence, $R(\lambda, \beta^{-1} \D + \M + \S) \K \to R(\lambda, \M  + \S)\K$ similarly. 
Therefore, for $\beta$ sufficiently large, the operator $I - R(\lambda, \beta^{-1}\D + \M + \S) \K$ is a small perturbation of the invertible operator $I - R(\lambda, \M + \S) \K$, so it is invertible. The convergence follows straightforwardly. We leave further details to the reader.
\end{proof}

\begin{proof}[Proof of Theorem~\ref{thm:ns-inst}]
This is analogous to the conclusion of the proofs of Propositions~\ref{pro:truncatedunstablevortex} and~\ref{pro:axisymmetricinstab}. We consider a counterclockwise, simple, smooth, closed curve $\vec{c} \subset B_\varepsilon(\lambda_\infty) \cap \rho(\T_\infty)$ containing $\lambda_\infty$ in its interior and no other element of $\sigma(\T_\infty)$. We define the spectral projection $\Pr_\beta$ according to
\begin{equation}
    \Pr_\beta = \frac{1}{2\pi i} \int_{\vec{c}} R(\lambda,\T_\beta) \, d\lambda.
\end{equation}
We use the convergence in Lemma~\ref{l:two} to determine that, for $\beta$ sufficiently large, the projection is non-trivial. We leave further details to the reader. \end{proof}

\begin{proof}[Proof of Corollary~\ref{cor:velocityinstability}]
By choosing $\beta$ sufficiently large, we may ensure that $\L^{(\beta)}_{\rm vor}$ has an unstable eigenvalue, denoted simply by $\lambda$, with $\Re \lambda \geq 1$, see the formula~\eqref{eq:eigenvalueformula}. We claim that an $L^2$-eigenfunction $\Omega$ of $\L^{(\beta)}_{\rm vor}$ associated to such an eigenvalue further satisfies $\Omega \in L^1(\R^3)$. Once this is known, we have that the velocity field $U = \BS_{3d}[\Omega]$ belongs to $L^2(\R^3)$ by standard mapping properties and moreover is an unstable eigenfunction of $\L^{(\beta)}_{\rm vel}$ with eigenvalue $\lambda$.

%Indeed, by standard energy estimates we find that $\Omega \in W^{1,2}(\R^3)$.
Notice that $\Omega \in D(\T_\beta)$ solves the equation
\begin{equation}\label{eqn:omega}
   \lambda \Omega - (1+\frac{\xi}{2} \cdot \nabla_\xi) \Omega - \Delta \Omega =F
\end{equation}
where $- F= \beta [\bar{U},\Omega] +\beta [U,\bar{\Omega}]$ belongs to $L^2(\R^3)$ and is compactly supported. 
{The solution of~\eqref{eqn:omega} can be found explicitly by `undoing' the similarity variables: Define
\begin{equation}
    h(x,t) = t^{\lambda-1} \Omega \left(\frac x {t^{1/2}}\right), \quad 
    g(x,t) = t^{\lambda-2} F\left(\frac x {t^{1/2}}\right).
\end{equation}
Then
\begin{equation}
    \label{eq:definitionofh}
    \p_t h - \Delta h = g, \quad h(\cdot,0) \equiv 0.
\end{equation}
The zero initial condition in~\eqref{eq:definitionofh} comes from $\| h(\cdot,t)\|_{L^2} = t^{\Re \lambda-\frac{1}{4}} \| \Omega \|_{L^2(\R^3)} \to 0$ as $t \to 0^+$. Notice also
\begin{equation}
    \label{eq:wheregbelongs}
    \| g(\cdot,t) \|_{L^1(\R^3)} = t^{\Re \lambda - \frac{1}{2}} \| F \|_{L^1(\R^3)}.
\end{equation}
Hence, $g \in L^\infty_t L^1_x(\R^3 \times (0,1))$. We can represent the eigenfunction $\Omega$ as
\begin{equation}
    \Omega(x) = h(x,1) = \int_0^1 e^{\Delta(1-s)} g(\cdot,s) \, ds \in L^1(\R^3),
\end{equation}
due to~\eqref{eq:wheregbelongs} and the mapping property $\| e^{\Delta(1-s)} \|_{L^1 \to L^1} \leq 1$.}
\end{proof}

A similar argument can be used to characterize the domains of $\L^{(\beta)}_{\rm vor}$ and $\L^{(\beta)}_{\rm vel}$, see~\cite[Section 2]{jiasverakillposed}.

\begin{comment}
\begin{equation*}
\begin{split}
    \Omega(x) &= h(1,x) = \int_0^1 e^{\Delta(1-s)} \big[s^{\lambda-1} F\Big(\frac \cdot {s^{1/2}}\Big)\big](x)\, ds = \int_0^1 \mathcal H_{1-s} \ast \big[s^{\lambda-1} F\Big(\frac \cdot {s^{1/2}}\Big)\big](x)\, ds 
    \\&= \int_0^1 \int_{\R^3} \frac{e^{-|x-y|^2 /(4-4s)}}{(4 \pi (1-s))^{3/2}}  s^{\lambda-1} F\Big(\frac y {s^{1/2}}\Big)\, dy\, ds.
\end{split}
\end{equation*}
Since $F$ is compactly supported on a fixed ball $B_{M}$ and by Young inequality on convolutions, we have
%, the integral in $y$ can be equivalently taken on $\{ y<M s^{1/2}\} \subseteq \{ y<M\}  $. For any point $x \in B_{2M}^c$ we deduce that 
\begin{equation}
\begin{split}
\|\Omega\|_{L^1} & \leq \int_0^1 \|\mathcal H_{1-s} \ast \big[s^{\lambda-1} F\Big(\frac \cdot {s^{1/2}}\Big)\big]\|_{L^1(\R^3)}\, ds 
\\
& \leq \int_0^1 \|\mathcal H_{1-s}\|_{L^1(\R^3)} \| \big[s^{\lambda-1} F\Big(\frac \cdot {s^{1/2}}\Big)\big]\|_{L^1(\R^3)}\, ds 
\\
& \leq
 \| F\|_{L^1(B_M)}  \int_0^1 s^{ \frac{1}2+ \Re \lambda} \, ds <\infty.
.
\end{split}
\end{equation}
%\begin{equation}
% \begin{split}
% |\Omega(x)| & \leq \int_0^1 \int_{\R^3} \frac{1}{(4 \pi (1-s))^{3/2}} e^{-|x|^2 /[4(4-4s)]} s^\lambda F\Big(\frac y {s^{1/2}}\Big)\, dy\, ds
% \\
% & \leq
%  \| F\|_{L^1(B_M)}  \int_0^1  \frac{1}{(4 \pi (1-s))^{3/2}} e^{-|x|^2 /[4(4-4s)]}s^{ \frac{3}4+ {\rm Re}\lambda} \, ds \leq C e^{-|x|^2/4}.
% .
% \end{split}
% \end{equation}

This concludes the proof %of the decay of $\Omega$ and therefore 
of the fact that $\Omega \in L^1(\R^3)$. Since we already know that $\Omega \in L^2(\R^3)$ we deduce by interpolation and  Sobolev embedding that $U \in L^2(\R^3)$.
Thanks to this property, we find that $U$ is an unstable eigenvalue in $L^2(\R^3)$ of the linearised Navier-Stokes equations in standard form.
\end{comment}

\section{Nonlinear instability}
\label{sec:nonlinearinstability}

% \dacomment{* minor additions here} 

In this section, we demonstrate how to use the linear instability proven in Corollary~\ref{cor:velocityinstability} to construct non-unique solutions to the forced Navier-Stokes equations.

We employ the uppercase/lowercase notation introduced in~\eqref{eq:definitionofuandU} for functions in similarity/physical variables, respectively. Recall that $L^p_\sigma$ is the space of divergence-free vector fields in $L^p(\R^3)$.

%This section provides a general result that uses the instability proven in Corollary~\ref{cor:velocityinstability} to construct nonunique solutions of the forced Navier-Stokes equations.

\begin{theorem}[Non-uniqueness criterion]\label{thm:nonuniqueness criterion}
Let $\bar{U}$ be a compactly supported, smooth, divergence-free vector field on $\R^3$. Suppose that~\ref{item:maintheorema} in Theorem~\ref{thm:refined} holds, that is, the linearized operator $\L_{\rm ss} \: D(\L_{\rm ss}) \subset L^2_\sigma \to L^2_\sigma$ defined by
\begin{equation}
     - \L_{\rm ss} U = - \frac{1}{2} \left( 1 + \xi \cdot \nabla_\xi \right) U - \Delta U + \bP (\bar{U} \cdot \nabla U + U \cdot \nabla \bar{U}) \, ,
\end{equation}
where $D(\L_{\rm ss}) := \{ U \in L^2_\sigma : U \in H^2(\R^3), \,  \xi \cdot \nabla U \in L^2(\R^3) \}$,
has an unstable eigenvalue~$\lambda$. Then $\lambda$ can be chosen to be maximally unstable, that is, $\Re \lambda = \sup_{z \in \sigma(\L_{\rm ss})} \Re z > 0$,  and moreover,~\ref{item:maintheoremb} in Theorem~\ref{thm:refined} holds.
\end{theorem}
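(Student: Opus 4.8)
The plan is to carry out the classical unstable-manifold construction, in the spirit of \cite[Theorem 4.1]{jiasverakillposed}, but \emph{without} its truncation step: since the eigenfunction will turn out to decay like a Gaussian, the linear solution and the corrector automatically land in the energy space. Concretely, I would first record the spectral picture. Up to a bounded zeroth-order term, $\L_{\rm ss}$ is the sum of the Ornstein--Uhlenbeck-type operator $\tfrac12+\tfrac12\xi\cdot\nabla_\xi+\Delta$ on $L^2_\sigma$ — which, e.g.\ via the conjugation to the heat semigroup $e^{\tau(\tfrac12+\tfrac12\xi\cdot\nabla_\xi+\Delta)}V_0=e^{\tau/2}(e^{(e^\tau-1)\Delta}V_0)(e^{\tau/2}\,\cdot\,)$, generates an analytic, smoothing semigroup of type $\le-\tfrac14$ — and the perturbation $V\mapsto-\bP(\bar{U}\cdot\nabla V+V\cdot\nabla\bar{U})$, which is relatively compact because $\bar{U}\in C^\infty_0$. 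By the reasoning recalled in Section~\ref{sec:spectralprelim} together with parabolic-type resolvent bounds, $\sigma_{\rm ess}(\L_{\rm ss})\subset\{\Re z\le-\tfrac14\}$, the spectrum in each $\{\Re z\ge\delta\}$, $\delta>0$, is finite, and $\|e^{\tau\L_{\rm ss}}\|_{L^2_\sigma\to L^2_\sigma}\les_\delta e^{(a+\delta)\tau}$ for $\tau\ge0$ and every $\delta>0$, with the smoothing bound $\|e^{\tau\L_{\rm ss}}\|_{H^j\to H^k}\les_{j,k,\delta}(1+\tau^{-(k-j)/2})e^{(a+\delta)\tau}$. Since an unstable eigenvalue exists by hypothesis and $\sigma_{\rm ess}$ is far to the left, $a:=\sup_{z\in\sigma(\L_{\rm ss})}\Re z$ is positive and is attained at an eigenvalue $\lambda$; fixing this $\lambda$ proves the first claim. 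Its eigenfunction $\eta$ solves, on $\{|\xi|\gtrsim1\}\setminus\supp\bar{U}$, the constant-coefficient equation $\lambda\eta-\tfrac12(1+\xi\cdot\nabla_\xi)\eta-\Delta\eta+\nabla p=0$, forcing $\eta$ and all its derivatives to decay like a Gaussian; in particular $\eta\in H^k(\R^3)$ for all $k$. Set $U^{\rm lin}(\cdot,\tau)=\Re(e^{\tau\lambda}\eta)$, so $\partial_\tau U^{\rm lin}=\L_{\rm ss}U^{\rm lin}$ and $\|U^{\rm lin}(\cdot,\tau)\|_{H^k}\les_k e^{a\tau}$.

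Next I would construct the corrector. Writing $U=\bar{U}+U^{\rm lin}+U^{\rm per}$ and using that $\bar{U}$ is a steady state of~\eqref{eq:similaritynavierstokes} with force $\bar{F}$ and that $U^{\rm lin}$ solves the linearized equation, one checks that $U$ solves~\eqref{eq:similaritynavierstokes} with force $\bar{F}$ if and only if $V:=U^{\rm per}$ solves $\partial_\tau V=\L_{\rm ss}V-\bP((U^{\rm lin}+V)\cdot\nabla(U^{\rm lin}+V))$, which I solve on $(-\infty,T]$ as a fixed point of
\begin{equation}
\Phi(V)(\tau):=-\int_{-\infty}^\tau e^{(\tau-s)\L_{\rm ss}}\,\bP\big((U^{\rm lin}+V)\cdot\nabla(U^{\rm lin}+V)\big)(s)\,ds
\end{equation}
in $Y_k:=\{V\in C((-\infty,T];H^k_\sigma):\ \sup_{\tau\le T}e^{-2a\tau}\|V(\tau)\|_{H^k}<\infty\}$ for a fixed $k>5/2$. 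The three pieces of the source — $U^{\rm lin}\cdot\nabla U^{\rm lin}$, the mixed terms, and $V\cdot\nabla V$ — are $O(e^{2as})$, $O(e^{3as})$ and $O(e^{4as})$ in $H^{k-1}$ respectively; combined with the semigroup bounds (choosing $\delta<a$, so that $\int_{-\infty}^\tau(1+(\tau-s)^{-1/2})e^{(a+\delta)(\tau-s)}e^{nas}\,ds\les_n e^{na\tau}$ for $n\ge2$), this yields $\|\Phi(V)\|_{Y_k}\les 1+e^{aT}\|V\|_{Y_k}+e^{2aT}\|V\|_{Y_k}^2$ with a matching Lipschitz bound. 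Choosing $T$ sufficiently negative makes $\Phi$ a contraction on a ball of $Y_k$; its fixed point $U^{\rm per}$ is divergence-free, and a standard parabolic bootstrap (smoothing of $e^{\tau\L_{\rm ss}}$, smoothness of $\bar{U}$ and $U^{\rm lin}$) upgrades the estimate to $\|U^{\rm per}(\cdot,\tau)\|_{H^k}\les_k e^{2a\tau}$ for every $k\ge0$ and gives smoothness of $U^{\rm per}$ on $\R^3\times(-\infty,T]$.

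Through~\eqref{eq:definitionofuandU} I then define $\bar{u},u^{\rm lin},u^{\rm per},\bar{f}$ from $\bar{U},U^{\rm lin},U^{\rm per},\bar{F}$ on $\R^3\times(0,e^T)$. Then $\bar{F}\in C^\infty_0$ (as $\bar{U}$ is) and $\|\bar{f}(\cdot,t)\|_{L^2}=t^{-3/4}\|\bar{F}\|_{L^2}\in L^1_t(0,e^T)$. By construction $\bar{u}$ and $u:=\bar{u}+u^{\rm lin}+u^{\rm per}$ solve~\eqref{eq:ns} with force $\bar{f}$ and are smooth on $\R^3\times(0,e^T)$ with the bounds~\eqref{eq: tilde u}. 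Since $U=\bar{U}+U^{\rm lin}+U^{\rm per}$ is bounded in every $H^k$ uniformly for $\tau\le T$, one has $\|u(\cdot,t)\|_{L^2}=t^{1/4}\|U(\cdot,\log t)\|_{L^2}\to0$ as $t\to0^+$ and $\int_0^{e^T}\|\nabla u(\cdot,t)\|_{L^2}^2\,dt=\int_0^{e^T}t^{-1/2}\|\nabla U(\cdot,\log t)\|_{L^2}^2\,dt<\infty$ (likewise for $\bar{u}$), so $u,\bar{u}\in C_{\rm w}([0,e^T];L^2)\cap L^\infty_tL^2_x\cap L^2_t\dot H^1_x$ with zero initial data. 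As the solutions are smooth and spatially Gaussian-decaying for $t>0$, testing~\eqref{eq:ns} against $u$, using $f\cdot u\in L^1_{t,x}$ (from $\|f(\cdot,t)\|_{L^2}\|u(\cdot,t)\|_{L^2}\les t^{-1/2}$) and $\|u(\cdot,t)\|_{L^2}\to0$ as $t\to0^+$, yields the energy identity~\eqref{eq:energyinequality} and, locally, the identity~\eqref{eqn:suitable}, so both are suitable Leray--Hopf. Finally $u\not\equiv\bar{u}$: in similarity variables $U-\bar{U}=U^{\rm lin}+U^{\rm per}$ with $\|U^{\rm lin}(\cdot,\tau)\|_{L^2}=e^{a\tau}\|\Re(e^{i\tau\Im\lambda}\eta)\|_{L^2}\gtrsim e^{a\tau}$ (the periodic-in-$\tau$ factor is bounded below, since $\Re\eta,\Im\eta$ are linearly independent when $\Im\lambda\neq0$, $\L_{\rm ss}$ being a real operator), whereas $\|U^{\rm per}(\cdot,\tau)\|_{L^2}=O(e^{2a\tau})=o(e^{a\tau})$; hence $U\not\equiv\bar{U}$, so $u\not\equiv\bar{u}$ on $\R^3\times(0,e^T)$. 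This establishes~\ref{item:maintheoremb}.

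The main obstacle is the sharp corrector rate $e^{2a\tau}$, which rests on the semigroup bound $\|e^{\tau\L_{\rm ss}}\|\les_\delta e^{(a+\delta)\tau}$ with $\delta<a$: for a general $C_0$-semigroup the exponential growth bound can strictly exceed the spectral bound $a$, so one must exploit the parabolic (Ornstein--Uhlenbeck) structure — visible in the conjugation to the heat semigroup above, and stable under the relatively compact perturbation by $\bar{U}$ — together with the finiteness of the spectrum in right half-planes, to pin the growth bound to $a$. The remaining points (Gaussian decay of $\eta$, the $H^k$-bootstrap for $U^{\rm per}$, the energy identities) are routine once the compact support of $\bar{U}$ is used.
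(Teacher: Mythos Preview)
Your approach is essentially the paper's: spectral bound from the relatively compact perturbation of the Ornstein--Uhlenbeck generator, semigroup growth and smoothing at rate $a+\delta$, then a Duhamel fixed point for $U^{\rm per}$ on $(-\infty,T]$. Two points are worth flagging.

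First, the Gaussian decay claim for $\eta$ is not justified and is likely false. Outside $\supp\bar U$ the pressure solves $-\Delta p=\div(\bar U\cdot\nabla\eta+\eta\cdot\nabla\bar U)$, a compactly supported source, so $\nabla p$ decays only polynomially; the equation $\lambda\eta-\tfrac12(1+\xi\cdot\nabla)\eta-\Delta\eta=-\nabla p$ then has a polynomially decaying right-hand side, and the inhomogeneous Ornstein--Uhlenbeck equation inherits that tail. Fortunately you never need Gaussian decay: $\eta\in H^k$ for all $k$ follows immediately from $\eta=e^{-\lambda}e^{\L_{\rm ss}}\eta$ and the smoothing bound you already recorded, and $H^k$ regularity is all that is used downstream (the energy and local energy identities for $u,\bar u$ only require $u\in H^k$ uniformly in $\tau$, not Gaussian tails). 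The paper likewise only claims $\eta\in H^k$.

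Second, you run the contraction directly in the $e^{2a\tau}$-weighted space, whereas the paper first closes at the slower weight $e^{(a+\eps_0)\tau}$ with $\eps_0=a/2$ and then bootstraps to $e^{2a\tau}$ in a separate step. Your direct route is legitimate (the integral $\int_{-\infty}^\tau(1+(\tau-s)^{-1/2})e^{(a+\delta)(\tau-s)}e^{2as}\,ds\les e^{2a\tau}$ for $\delta<a$ does close), and is a mild streamlining of the paper's two-stage argument.
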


As mentioned above~\eqref{eq:satisfytheasymptotics}, a second solution to~\eqref{eq:ns} is sought as a trajectory on the unstable manifold of $\bar{U}$ associated to the most unstable eigenvalues of $\L_{\rm ss}$. We make the following ansatz:
\begin{equation}
    U = \bar{U} + U^{\rm lin} + U^{\rm per},
\end{equation}
where
\begin{equation}
U^{\rm lin}(\cdot,\tau) = \Re (e^{\lambda \tau} \eta) \, 
\end{equation}
is a solution of the linearized PDE $\p_\tau U^{\rm lin} = \L_{\rm ss} U^{\rm lin}$,
$\lambda = a + i b$ is an unstable eigenvalue of $\L_{\rm ss}$ with maximal growth rate $a > 0$, and $\eta\in L^2_\sigma$ is a non-trivial eigenfunction associated to $\lambda$. The remainder $U^{\rm per}$, whose equation is~\eqref{eq:nonlin2} below, decays in $L^2$ as $o(e^{\tau a})$ when $\tau \to -\infty$. The additional decay ensures that $U$ is not identical to $\bar{U}$ and, moreover, guarantees non-uniqueness.

In Section~\ref{sec:nonlinearconstruction}, we employ a fixed point argument to solve~\eqref{eq:nonlin2} and construct $U^{\rm per}$. First, we introduce the semigroup generated by $\L_{\rm ss}$.

\subsection{The linear operator}
In this section we study $\sigma(\L_{\rm ss})$, the spectrum of the closed, densely defined operator $\L_{\rm ss} : D(\L_{\rm ss}) \subset L^2_\sigma \to L^2_\sigma$, and the associated semigroup $e^{\tau \L_{\rm ss}} : L^2_\sigma \to L^2_\sigma$.
The latter is well defined and strongly continuous, as proven in \cite[Section 2]{jiasverakillposed}.

We define the \textit{spectral bound} of $\L_{\rm ss}$ as
\begin{equation}
	s(\L_{\rm ss}) 
	:= \sup\{  \Re \lambda\, : \, \lambda \in \sigma(\L_{\rm ss}) \} \, ,
\end{equation}
which is bounded by the \textit{growth bound}
\begin{equation}
\omega_0(\L_{\rm ss}) := \inf \{\omega \in \R \, : \,  \| e^{\tau \L_{\rm ss}}\|_{L^2_\sigma \to L^2_\sigma} \le M(\omega)e^{\tau \omega} \}
\end{equation}
of the semigroup. We refer the reader to \cite[Proposition 2.2]{EngelNagel} for the proof of the inequality $s(\L_{\rm ss})\le \omega_0(\L_{\rm ss})$. %In fact, in many interesting situations (for instance when the generator is bounded) the equality $s=\omega_0$ holds.
As a consequence of \cite[Corollary 2.11]{Katobook}, in our generality, we have the following.

\begin{lemma}
    It holds $\omega_0(\L_{\rm ss}) = \max \{ \omega_{\rm ess}(\L_{\rm ss}), s(\L_{\rm ss}) \}$, where
    \begin{equation}
        \omega_{\rm ess}(\L_{\rm ss}):= \inf_{\tau>0} \frac{1}{\tau}
        \log \left( \inf  \{ \| e^{\tau \L_{\rm ss}} - K\|_O \, : \, K  \, \, {\rm is \, \, compact} \}\right).
    \end{equation}
    Moreover, for every $w>\omega_{\rm ess}(\L_{\rm ss})$, the set $\sigma(\L_{\rm ss})\cap \{\Re \lambda> w\}$ is finite, and the corresponding spectral projection has finite rank.
\end{lemma}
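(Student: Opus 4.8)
The statement is an instance of the classical decomposition of the growth bound of a $C_0$-semigroup into its essential growth bound and its spectral bound; in our generality it follows from the theory of semigroups on Banach spaces as developed in Engel--Nagel and Kato. The plan is to invoke this general theory, taking care only to check the hypotheses that make it applicable.

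First I would recall the structure: $\L_{\rm ss}$ generates a strongly continuous semigroup $e^{\tau\L_{\rm ss}}$ on $L^2_\sigma$ (this is quoted from \cite[Section 2]{jiasverakillposed}), so the quantities $s(\L_{\rm ss})$, $\omega_0(\L_{\rm ss})$ and $\omega_{\rm ess}(\L_{\rm ss})$ are all well defined. The key input is the general identity $\omega_0(T) = \max\{\omega_{\rm ess}(T), s(T)\}$, valid for any $C_0$-semigroup; see \cite[Corollary 2.11, Chapter IV]{EngelNagel} (or the discussion around \cite[Corollary 2.11]{Katobook} as cited in the excerpt). To apply it one needs no special structure beyond strong continuity, so the first step is essentially a citation.

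The second step is the spectral gap statement: for $w > \omega_{\rm ess}(\L_{\rm ss})$, the set $\sigma(\L_{\rm ss}) \cap \{\Re\lambda > w\}$ is finite and the associated spectral projection has finite rank. This is the spectral mapping / spectral decomposition theorem for the essential spectrum of semigroups: the part of the spectrum of the generator lying strictly to the right of $\omega_{\rm ess}$ consists of finitely many eigenvalues of finite algebraic multiplicity, and the Riesz projection onto that part has finite rank. One way to see it is to note that the essential spectral radius of $e^{\tau\L_{\rm ss}}$ equals $e^{\tau\omega_{\rm ess}(\L_{\rm ss})}$, so outside the disk of that radius the spectrum of $e^{\tau\L_{\rm ss}}$ is a finite set of eigenvalues of finite multiplicity (the standard Riesz theory for the essential spectrum, cf. the discussion of Fredholm perturbations in Section~\ref{sec:spectralprelim}); pulling this back through the spectral inclusion $e^{\tau\sigma(\L_{\rm ss})} \subset \sigma(e^{\tau\L_{\rm ss}})$ and choosing $\tau$ appropriately isolates the relevant eigenvalues of $\L_{\rm ss}$ and identifies the spectral projections.

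I do not expect a genuine obstacle here: the lemma is a packaging of standard semigroup theory, and the only thing to verify is that our $\L_{\rm ss}$ falls within the scope of that theory, which it does once strong continuity of $e^{\tau\L_{\rm ss}}$ is granted. If anything, the mildly delicate point is bookkeeping between the two conventions for "essential spectrum" — Kato's semi-Fredholm version versus the Fredholm-of-index-zero version used in this paper — but since we are working to the right of $\omega_{\rm ess}$, where the spectrum is discrete in every reasonable convention, this distinction is immaterial for the conclusion. Accordingly I would keep the proof to a few lines: cite the decomposition of $\omega_0$, cite the finite-rank Riesz projection statement, and remark that strong continuity is the only hypothesis needed.
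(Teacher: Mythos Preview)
Your proposal is correct and matches the paper's approach: the paper does not give a standalone proof but simply records the lemma as a consequence of the cited semigroup result (\cite[Corollary 2.11]{Katobook}), exactly as you suggest. Your additional remarks on verifying strong continuity and on the essential-spectrum conventions are accurate and harmless elaborations of what the paper leaves implicit.
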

In other words, the identity $s(\L_{\rm ss}) = \omega_0(\L_{\rm ss})$ holds up to taking into account contributions coming from the essential spectrum.

Applying \cite[Lemma 2.7]{jiasverakillposed}, we have that $\omega_{\rm ess}(\L_{\rm ss}) \le -1/4$, and hence the following result holds.

\begin{proposition}\label{prop:specLOmega}
Assume $a := s(\L_{\rm ss})>0$. Then $a<\infty$, and there exist $\lambda = a + ib\in \sigma(\L_{\rm ss})$ and $\eta = \eta_1 + i \eta_2$ such that $\eta_1, \eta_2 \in D(\L_{\rm ss})$ and $\L_{\rm ss} \eta = \lambda \eta$.

Moreover, for any $\delta>0$, it holds
\begin{equation}\label{eq: growth est}
    \| e^{ \tau \L_{\rm ss}} U_0 \|_{L^2} \le M(\delta) e^{\tau( a + \delta)} \| U_0 \|_{L^2} \, ,
    \qquad \forall\,  U_0 \in L^2_\sigma \, .
\end{equation}
\end{proposition}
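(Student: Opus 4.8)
The plan is to read the entire statement off the lemma immediately preceding it, combined with the bound $\omega_{\rm ess}(\L_{\rm ss}) \le -1/4$ quoted just above. First I would observe that $e^{\tau \L_{\rm ss}}$, being a $C_0$-semigroup on $L^2_\sigma$, has a finite growth bound $\omega_0(\L_{\rm ss})$ (the standard estimate $\|e^{\tau \L_{\rm ss}}\|_{L^2_\sigma \to L^2_\sigma} \le M e^{\omega \tau}$ holds for some $M,\omega$, see \cite{EngelNagel}); since $s(\L_{\rm ss}) \le \omega_0(\L_{\rm ss})$ this already gives $a < \infty$. Moreover, using the preceding lemma together with $a = s(\L_{\rm ss}) > 0 > -1/4 \ge \omega_{\rm ess}(\L_{\rm ss})$ one gets the sharper identity $\omega_0(\L_{\rm ss}) = \max\{ \omega_{\rm ess}(\L_{\rm ss}), s(\L_{\rm ss})\} = a$, which is exactly what is needed for the growth estimate at the end.

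Next I would produce the eigenvalue. Choose $w$ with $\omega_{\rm ess}(\L_{\rm ss}) < w < a$ (possible by the inequalities above), and set $\Sigma := \sigma(\L_{\rm ss}) \cap \{\Re \lambda > w\}$. By the preceding lemma $\Sigma$ is finite and the corresponding spectral projection has finite rank; since $\Sigma$ is finite, each of its points is isolated in $\sigma(\L_{\rm ss})$ (all other spectral values have $\Re \le w$) and carries a finite-rank Riesz projection, hence is an eigenvalue of finite algebraic multiplicity. Because $\sigma(\L_{\rm ss}) \setminus \Sigma \subset \{\Re \lambda \le w\}$ while $w < a = \sup_{z \in \sigma(\L_{\rm ss})} \Re z$, the supremum defining $a$ must be attained on the finite set $\Sigma$; I fix $\lambda = a + ib \in \Sigma$ attaining it and let $\eta$ be a corresponding eigenvector, so $\eta \ne 0$, $\eta \in D(\L_{\rm ss})$ (understood in the complexification $L^2_\sigma(\R^3;\C^3)$) and $\L_{\rm ss}\eta = \lambda\eta$.

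To obtain $\eta_1,\eta_2 \in D(\L_{\rm ss})$ I would use that $\L_{\rm ss}$ is a real operator: the background $\bar U$ and the Leray projector have real kernels, so $\L_{\rm ss}$ commutes with complex conjugation, and its domain $\{U : U \in H^2, \ \xi \cdot \nabla U \in L^2, \ \div U = 0\}$ is conjugation-invariant. Hence $\bar\eta \in D(\L_{\rm ss})$ with $\L_{\rm ss}\bar\eta = \bar\lambda\bar\eta$, and therefore $\eta_1 := \frac12(\eta + \bar\eta)$ and $\eta_2 := \frac{1}{2i}(\eta - \bar\eta)$ both lie in $D(\L_{\rm ss})$ (if $b = 0$ one may simply take $\eta$ real). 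Finally, for the growth bound: given $\delta > 0$ we have $a + \delta > a = \omega_0(\L_{\rm ss})$, so by the definition of the growth bound there is $M(\delta) \ge 1$ with $\|e^{\tau \L_{\rm ss}}\|_{L^2_\sigma \to L^2_\sigma} \le M(\delta) e^{\tau(a+\delta)}$ for all $\tau \ge 0$; applying this to $U_0 \in L^2_\sigma$ yields \eqref{eq: growth est}.

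There is no genuine obstacle here: the substance is entirely in the cited spectral-theory lemma (ultimately \cite[Corollary 2.11]{Katobook}) and in the bound $\omega_{\rm ess}(\L_{\rm ss}) \le -1/4$ from \cite[Lemma 2.7]{jiasverakillposed}. The only two points that need a line of care are that the supremum defining $a$ is actually attained — which is why one passes to the finite set $\Sigma$ — and the conjugation-invariance of $D(\L_{\rm ss})$, which is what allows the complex eigenfunction to be split into real and imaginary parts still lying in the domain.
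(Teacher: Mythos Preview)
Your proposal is correct and takes exactly the same route as the paper, which does not give a standalone proof but simply observes that the proposition follows from the preceding lemma (the identity $\omega_0(\L_{\rm ss}) = \max\{\omega_{\rm ess}(\L_{\rm ss}), s(\L_{\rm ss})\}$ together with the finiteness of $\sigma(\L_{\rm ss}) \cap \{\Re \lambda > w\}$) combined with the bound $\omega_{\rm ess}(\L_{\rm ss}) \le -1/4$ from \cite[Lemma 2.7]{jiasverakillposed}. You have filled in the details (attainment of the supremum on a finite set, conjugation-invariance of $D(\L_{\rm ss})$) that the paper leaves implicit.
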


A more elementary way to prove that $\sigma(\L_{\rm ss}) \cap \{ \Re \lambda \geq \mu \}$ is countable for all $\mu > - 1/4$ was already mentioned in Section~\ref{sec:spectralprelim}, since $U \mapsto \bP (\bar{U} \cdot \nabla U + U \cdot \nabla \bar{U})$ is relatively compact with respect to $U \mapsto (1+\xi \cdot \nabla_\xi)U/2 + \Delta U$, whose essential spectrum is contained in $\{ \Re \lambda \leq - 1/4 \}$, as is easily demonstrated by an energy estimate, see~\cite[Section 2]{jiasverakillposed}. Proposition~\ref{prop:specLOmega} contains more information, and in particular, the growth estimate~\eqref{eq: growth est}. 
% \dacomment{* I revised this comment}

\begin{comment}
Let us give a heuristic explanation of why $\sigma(\L_{\rm ss})$ has a very nice structure in $\{{\rm Re}\lambda > -1/4\}$. The key observation is that $\L_{\rm ss}$ can be decomposed into the sum of
    \begin{equation}
        \mathcal{S}U := \left( \frac{1}{2} + \frac{\xi}{2}\cdot \nabla \right)U + \Delta U \, ,
    \end{equation}
    \begin{equation}
        \mathcal{K}U :=- \mathbb{P}\left[
     \bar U \cdot \nabla U + U \cdot \nabla \bar U\right] \, ,
    \end{equation}
    where $\mathcal{K}$ is compact relatively to $\mathcal{S}$, and 
    $\sigma(\mathcal{S})\subset \{{\rm Re}\lambda \le -1/4\}$.
    The first conclusion is related to the fact that $\Delta: H^2\subset L^2\to L^2$ is a negative self adjoint operator, the second builds upon the regularizing properties of $\Delta$ and the fact that $\bar U$ has compact support. We refer the reader to \cite[Section 2]{jiasverakillposed} for more details.
\end{comment}

   We conclude this section by proving that $e^{\tau \L_{\rm ss}}$ enjoys parabolic regularity estimates with sharp growth at infinity.
   
   \begin{lemma}\label{prop:semigroup}
	Assume $a = s(\L_{\rm ss})>0$. Then, for any $\sigma_2 \geq \sigma_1 \geq 0$ and $\delta > 0$, it holds
	\begin{equation}\label{eq:reg}
		\| e^{ \tau \L_{\rm ss}} U_0 \|_{H^{\sigma_2}} \le \frac{M(\sigma_1,\sigma_2,\delta)}{\tau^{(\sigma_2 - \sigma_1)/2}} e^{\tau (a + \delta)} \| U_0 \|_{H^{\sigma_1}} \, ,
	\end{equation}
	%, for any integer $k\ge 1$ and $\delta>0$, it holds
	%\begin{equation}\label{eq:reg}
		%\| e^{ \tau \L_{\rm ss}} U_0 \|_{H^k} \le \frac{M(k,\delta)}{\tau^{1/2}} e^{\tau (a + \delta)} \| U_0 \|_{H^{k-1}} \, ,
	%\end{equation}
	for any $U_0 \in L^2_\sigma \cap H^{\sigma_1}(\R^3)$.
\end{lemma}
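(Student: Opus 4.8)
The plan is to regard $\L_{\rm ss}$ as a lower-order perturbation of the rescaled heat operator and to bootstrap Sobolev regularity through Duhamel's formula, importing the sharp exponential rate $a+\delta$ from the $L^2$ estimate~\eqref{eq: growth est}. Decompose $\L_{\rm ss}=\mathcal{S}+\mathcal{K}$, where $\mathcal{S}U:=\tfrac12 U+\tfrac12\,\xi\cdot\nabla_\xi U+\Delta U$ and $\mathcal{K}U:=-\bP(\bar U\cdot\nabla U+U\cdot\nabla\bar U)$. Since the generator of dilations $\xi\cdot\nabla_\xi$ commutes with $\Delta$ and with $\bP$, the semigroup $e^{\tau\mathcal S}$ preserves $L^2_\sigma$; undoing the similarity variables shows that $v(x,t):=t^{-1/2}(e^{\tau\mathcal S}U_0)(xt^{-1/2},\log t)$ solves the heat equation for $t\ge 1$ with $v(\cdot,1)=U_0$, hence the explicit formula
\begin{equation}
    \widehat{e^{\tau\mathcal S}U_0}(\zeta)=e^{-\tau}\,e^{-(1-e^{-\tau})|\zeta|^2}\,\widehat{U_0}(e^{-\tau/2}\zeta).
\end{equation}
Combined with the elementary bound $\sup_{x\ge 0}x^{\alpha}e^{-cx}\les_{\alpha,c}1$, this gives, for all $\sigma_2\ge\sigma_1\ge 0$, the smoothing estimate
\begin{equation}\label{eq:freesmooth}
    \| e^{\tau\mathcal S}U_0\|_{H^{\sigma_2}}\les_{\sigma_1,\sigma_2}\min(1,\tau)^{-(\sigma_2-\sigma_1)/2}\,\|U_0\|_{H^{\sigma_1}},\qquad \tau>0,
\end{equation}
together with the decay $\| e^{\tau\mathcal S}U_0\|_{H^{\sigma_2}}\les_{\sigma_2}e^{-\tau/4}\|U_0\|_{L^2}$ for $\tau\ge 1$.

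Next, since $\mathcal{K}$ is a first-order differential operator with $C^\infty_0$ coefficients and $\bP$ is bounded on every $H^s$, the map $\mathcal{K}\colon H^{s}\to H^{s-1}$ is bounded for all $s\in\R$. I would then iterate Duhamel's formula
\begin{equation}\label{eq:duham}
    e^{\tau\L_{\rm ss}}U_0=e^{\tau\mathcal S}U_0+\int_0^\tau e^{(\tau-s)\mathcal S}\,\mathcal{K}\,e^{s\L_{\rm ss}}U_0\,ds.
\end{equation}
The main obstacle is the one-derivative loss in $\mathcal{K}$: the smoothing of $e^{(\tau-s)\mathcal S}$ recovers that derivative only with the \emph{integrable} weight $(\tau-s)^{-1/2}$, so a single Duhamel step cannot gain a full derivative, and a naive bootstrap diverges because $\|e^{s\L_{\rm ss}}U_0\|_{H^k}\sim s^{-k/2}$ fails to be integrable against $(\tau-s)^{-1/2}$ once $k\ge 2$.

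To circumvent this I would argue in three steps on $\tau\in(0,1]$, first for $U_0$ in a dense class (say $C^\infty_0(\R^3)\cap L^2_\sigma$, for which parabolic regularity makes all the norms below a priori finite), extending by density at the end. \emph{Step 1 (uniform boundedness).} Applying $H^{\sigma_1}\!\to\! H^{\sigma_1}$ to~\eqref{eq:duham} with~\eqref{eq:freesmooth} and $\mathcal{K}\colon H^{\sigma_1}\!\to\! H^{\sigma_1-1}$ gives $\|e^{\tau\L_{\rm ss}}U_0\|_{H^{\sigma_1}}\les\|U_0\|_{H^{\sigma_1}}+\int_0^\tau(\tau-s)^{-1/2}\|e^{s\L_{\rm ss}}U_0\|_{H^{\sigma_1}}\,ds$, so the singular Gronwall (Gronwall--Henry) inequality yields $\|e^{\tau\L_{\rm ss}}U_0\|_{H^{\sigma_1}}\les_{\sigma_1}\|U_0\|_{H^{\sigma_1}}$ uniformly on $(0,1]$; the case $\sigma_1=0$ is~\eqref{eq: growth est}. \emph{Step 2 (fractional gain).} For $\theta\in[0,1)$, applying $H^{\sigma_1}\!\to\! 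H^{\sigma_1+\theta}$ to~\eqref{eq:duham}, estimating $e^{(\tau-s)\mathcal S}\colon H^{\sigma_1-1}\!\to\! H^{\sigma_1+\theta}$ by $(\tau-s)^{-(1+\theta)/2}$ (integrable since $\theta<1$) and invoking Step 1, one gets $\|e^{\tau\L_{\rm ss}}U_0\|_{H^{\sigma_1+\theta}}\les_{\sigma_1,\theta}\tau^{-\theta/2}\|U_0\|_{H^{\sigma_1}}$ for $\tau\in(0,1]$. \emph{Step 3 (composition).} Given $\sigma_2>\sigma_1$, choose $n\in\N$ with $\theta:=(\sigma_2-\sigma_1)/n<1$, write $e^{\tau\L_{\rm ss}}=\big(e^{(\tau/n)\L_{\rm ss}}\big)^{n}$, and apply Step 2 on each subinterval of length $\tau/n\le 1/n$; this yields $\|e^{\tau\L_{\rm ss}}U_0\|_{H^{\sigma_2}}\les_{\sigma_1,\sigma_2}(\tau/n)^{-n\theta/2}\|U_0\|_{H^{\sigma_1}}\les_{\sigma_1,\sigma_2}\tau^{-(\sigma_2-\sigma_1)/2}\|U_0\|_{H^{\sigma_1}}$ for $\tau\in(0,1]$.

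Finally, for $\tau\ge 1$ I would use the semigroup law $e^{\tau\L_{\rm ss}}U_0=e^{\L_{\rm ss}}\big(e^{(\tau-1)\L_{\rm ss}}U_0\big)$ together with~\eqref{eq: growth est}: $\|e^{(\tau-1)\L_{\rm ss}}U_0\|_{L^2}\les_\delta e^{(\tau-1)(a+\delta)}\|U_0\|_{L^2}$, and Steps 1--3 with $\tau=1$, $\sigma_1=0$ bound $\|e^{\L_{\rm ss}}V\|_{H^{\sigma_2}}\les_{\sigma_2}\|V\|_{L^2}$; hence $\|e^{\tau\L_{\rm ss}}U_0\|_{H^{\sigma_2}}\les_{\sigma_2,\delta}e^{\tau(a+\delta)}\|U_0\|_{H^{\sigma_1}}$, and since $\tau^{(\sigma_2-\sigma_1)/2}\les_{\sigma_1,\sigma_2,\delta}e^{\tau\delta}$ for $\tau\ge 1$, this is $\les\tau^{-(\sigma_2-\sigma_1)/2}e^{\tau(a+2\delta)}\|U_0\|_{H^{\sigma_1}}$; relabelling $2\delta\mapsto\delta$ gives~\eqref{eq:reg} for $\tau\ge 1$, while on $(0,1]$ the bound of Step 3 already has the required form (there $e^{\tau(a+\delta)}$ is comparable to a constant). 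A density argument in $U_0$ then completes the proof. The only genuinely delicate point is the derivative-loss obstruction above, which is defused by Steps 1 and 2; everything else is routine bookkeeping with~\eqref{eq:freesmooth}, the mapping property of $\mathcal{K}$, and singular Gronwall.
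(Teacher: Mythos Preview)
Your argument is correct and complete, but it takes a genuinely different route from the paper. The paper restricts to integer exponents and \emph{undoes the similarity variables on the full operator}: writing $u(x,t)=(t+1)^{-1/2}U\bigl(x/(t+1)^{1/2},\log(t+1)\bigr)$ turns the linearized flow $\partial_\tau U=\L_{\rm ss}U$ into the ordinary linear parabolic problem $\partial_t u-\Delta u=-\bP(\bar u\cdot\nabla u+u\cdot\nabla\bar u)$ with smooth, compactly supported coefficients, for which the short-time smoothing $\|u\|_{L^2}+t^{1/2}\|\nabla u\|_{L^2}\les\|U_0\|_{L^2}$ is a one-line energy estimate; higher derivatives follow by differentiating the equation, and fractional orders by interpolation. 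The long-time step (your final paragraph) is the same in both arguments.

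Your approach instead stays in similarity variables, isolates the explicit free semigroup $e^{\tau\mathcal S}$, and treats $\mathcal K$ perturbatively via Duhamel, singular Gronwall, and a fractional-step bootstrap. This is the standard analytic-semigroup perturbation argument; it works directly for all real $\sigma_1,\sigma_2$ without a separate interpolation step, and it makes the parabolic smoothing mechanism (the Gaussian multiplier in $\widehat{e^{\tau\mathcal S}U_0}$) completely explicit. The paper's route is shorter because passing to physical variables absorbs the scaling term $\frac12\xi\cdot\nabla_\xi$ into the change of variables, leaving a textbook parabolic equation on which no bootstrap is needed. One small slip: $\xi\cdot\nabla_\xi$ does \emph{not} commute with $\Delta$ (one has $[\xi\cdot\nabla_\xi,\Delta]=-2\Delta$); but your conclusion that $e^{\tau\mathcal S}$ preserves $L^2_\sigma$ is still correct, since $\mathcal S$ commutes with $\bP$ (equivalently, $\div$ of a solution satisfies a scalar equation with zero data), and this is all you actually use.
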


\begin{proof}
It suffices to treat the case when $\sigma_1,\sigma_2$ are non-negative integers. The general case follows by interpolation.
% \dacomment{This proof was adjusted to allow non-integer values; this is useful for bootstrapping.}

We divide the proof into two steps. First, we show smoothing estimates~\eqref{eq:reg} for small times, say, $0<\tau \le 2$, and afterward we address the long-time behaviour.

{\bf Step 1}. For any $0\le m \le k$, it holds
    \begin{equation}\label{eq:reg1}
		\| e^{ \tau \L_{\rm ss}} U_0 \|_{H^k} \le M(k,m) \tau^{-\frac{k-m}{2}} \| U_0 \|_{H^{m}} \, ,
		\quad \forall\, U_0 \in L^2_\sigma \cap H^m(\R^3), \, \, \tau \in (0,2) \, .
	\end{equation}

To ease notation, we set $U(\cdot,\tau):=  e^{ \tau \L_{\rm ss}} U_0$.
We study the problem in physical variables: Setting
\begin{equation}
    u(x, t) := \frac{1}{\sqrt{t+1}} U\left( \frac{x}{\sqrt{t+1}}, \log(t+1)\right) \, , \quad
    \bar u(x,t) := \frac{1}{\sqrt{t+1}} \bar  U\left( \frac{x}{\sqrt{t+1}}\right) \, ,
\end{equation}
we have the equation
% \dacomment{* slightly reformatted}
\begin{equation}\label{eq: eL_in _phis}
\left\lbrace
    \begin{aligned}
        \partial_t u - \Delta u &= - \mathbb{P} (\bar u \cdot \nabla u + u\cdot \nabla \bar u) && \text{ in } \R^3 \times (0,e^2-1)  \\
        %\\
        %\div u = 0 \, ,
        u(x,0) &= U_0(x) && \text{ in } \R^3 \, .
    \end{aligned}
    \right.
\end{equation}
 Using that $\bar U$ is smooth with compact support, we can easily prove that
\begin{equation}
    \| u(\cdot, t) \|_{L^2} + t^{1/2}\| \nabla u(\cdot, t) \|_{L^2}
    \le C(\bar U) \| U_0 \|_{L^2}\, ,
    \qquad t \in (0, 10) \, ,
\end{equation}
which gives
\begin{equation}
    \| U(\cdot, \tau) \|_{L^2} + \tau^{1/2}\| \nabla U(\cdot, \tau) \|_{L^2}
    \le C(\bar U) \| U_0 \|_{L^2}\, ,
    \qquad \tau \in (0, 2) \, .
\end{equation}
The latter implies \eqref{eq:reg1} for $k=1$ and $m=0,1$. The general case follows by induction studying the equation solved by $\nabla^k u$ which has a structure similar to \eqref{eq: eL_in _phis} but with forcing and additional lower order terms.

{\bf Step 2}. For any $\delta>0$ it holds
\begin{equation}\label{eq:reg2}
		\| e^{ \tau \L_{\rm ss}} U_0 \|_{H^k} \le M(k,\delta) e^{\tau(a+\delta)}
		\| U_0 \|_{L^2} \, ,
		\quad \forall\, U_0\in L^2_\sigma, \, \, \tau \ge 2 \, .
	\end{equation}
Using the semigroup property, Step 1 with $m=0$, and \eqref{eq: growth est}, we have
\begin{equation}
  \begin{split}
            \| e^{\tau \L_{\rm ss}} U_0 \|_{H^k}  & = \| e^{\kappa \L_{\rm ss}}(e^{(\tau - \kappa)\L_{\rm ss}} U_0 ) \|_{H^k}
    \\ &
    \le M(k) \kappa^{-k/2} \| e^{(\tau - \kappa)\L_{\rm ss}} U_0  \|_{L^2}
    \\& \le M(k,\delta) \kappa^{-k/2}e^{(\tau - \kappa)(a+\delta)} \| U_0  \|_{L^2} \, .
  \end{split}
\end{equation}
The claimed inequality \eqref{eq:reg2} follows by choosing $\kappa =1$. % and observing that $\| U \|_{L^2} \le \| U \|_{H^{k-1}}$.

It is immediate to see that the combination of~\eqref{eq:reg1} in Step~1 and~\eqref{eq:reg2} in Step~2 gives \eqref{eq:reg} for integers $\sigma_2 \geq \sigma_1 \geq 0$. This completes the proof.
\end{proof}

As a consequence of Proposition~\ref{prop:specLOmega} and Proposition~\ref{prop:semigroup}, it holds
\begin{equation}\label{eq: Ulin H1}
	\| U^{\rm lin}\|_{H^k} = C(k,\eta) e^{a \tau} \, \quad \text{for any $\tau \ge 0$ and $k\in \mathbb{N}$} \, .
\end{equation}

\subsection{Nonlinear construction}
\label{sec:nonlinearconstruction}

In this section, we solve the nonlinear problem
\begin{equation}\label{eq:nonlin2}
\begin{aligned}
	&\partial_\tau U^{\rm per} - \L_{\rm ss} U^{\rm per} 
	=
	- \mathbb{P}\left[ (U^{\rm per} \cdot \nabla) U^{\rm per} \right] \\
	&\quad -  \bP \left[ ( U^{\rm lin} \cdot \nabla) U^{\rm per}  +  (U^{\rm per} \cdot \nabla)  U^{\rm lin} \right] - \bP \left[ (U^{\rm lin} \cdot \nabla) U^{\rm lin} \right] \, .
	\end{aligned}
\end{equation}

\begin{comment}

\begin{equation}
\begin{cases}
	\partial_\tau V (\xi, \tau) - \L_{\rm ss} V(\xi, \tau) 
	+ (V(\xi,\tau) \cdot \nabla)V(\xi,\tau)
	+ \nabla \pi = 0\, ,
	\\
	\div V = 0 \, ,
	\\
	C_0^{-1} \le e^{-a\tau}\| V(\cdot, \tau ) \|_{L^2} \le C_0
\end{cases}
\end{equation}
in $\R^3 \times (-\infty, T) $, for some $T \in \R$ and $C_0\ge 1$, where $a= s(\L_{\rm ss})>0$.
As already outlined in section~\ref{sec:strategyofproof}, we begin by considering a solution of the linearized problem with maximal norm growth: we set
\begin{equation}\label{eq: Ulin}
	U^{\rm lin}(\xi, \tau) := \Re ( e^{\tau \L_{\rm ss}} \eta  )(\xi) = \Re ( e^{\tau(a+ib)} \eta  )(\xi) \, ,
\end{equation}
where $\eta \in D(\L_{\rm ss})$ solves $\L_{\rm ss}\eta = \lambda\eta$, with $\Re \lambda = s(\L_{\rm ss})=a$.
As a consequence of Proposition~\ref{prop:specLOmega} and Proposition~\ref{prop:semigroup}, it holds 
\begin{equation}\label{eq: Ulin H1}
	\| U^{\rm lin}\|_{H^k} = C(k,\eta) e^{a \tau} \, \quad \text{for any $\tau \ge 0$ and $k\in \mathbb{N}$} \, .
\end{equation}
We then write $V  =  U^{\rm lin} + U^{\rm per}$, where
$U^{\rm per}$ satisfies
\begin{equation}\label{eq:nonlin2}
	\partial_\tau U^{\rm per} 
	= \L_{\rm ss} U^{\rm per} 
	- \mathbb{P}\left[ (U^{\rm per} \cdot \nabla) U^{\rm per} + (U^{\rm lin} \cdot \nabla) U^{\rm lin} +  ( U^{\rm lin} \cdot \nabla) U^{\rm per}  +  (U^{\rm per} \cdot \nabla)  U^{\rm lin} \right] \, ,
\end{equation}
for $\tau \in (-\infty, T)$.

\end{comment}

% \dacomment{* I made $T \in \R$ rather than $T > 0$ and using $-T$. I also fix $\varepsilon_0 = a/2$ sooner.}

\begin{proposition}\label{thm:nonlin}
	    Assume $a=s(\L_{\rm ss})>0$ and $N> 5/2$ is an integer. Then there exist $T=T(\bar U,U^{\rm lin},N) \in \R$, $\eps_0=\eps_0(a)>0$ and $U^{\rm per} \in C((-\infty, T]; H^N(\R^3;\R^3))$, a solution to \eqref{eq:nonlin2}, such that
		\begin{equation}
			\| U^{\rm per}(\cdot, \tau )\|_{H^N} 
			\le  e^{(a + \eps_0) \tau } \, ,
			\quad
			\text{for any $\tau \leq T$} \, .
		\end{equation}
\end{proposition}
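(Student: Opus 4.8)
The plan is to obtain $U^{\rm per}$ as the unique fixed point of a Duhamel-type map in a weighted function space that builds in the decay as $\tau\to-\infty$. Fix $\eps_0\in(0,a)$ (say $\eps_0=a/2$) and $\delta:=\eps_0/2$. For a parameter $T\in\R$, to be taken very negative, introduce the Banach space
\[
X_T:=\Big\{\,W\in C((-\infty,T];H^N(\R^3;\R^3))\ :\ W(\tau)\in L^2_\sigma,\ \ \|W\|_{X_T}:=\sup_{\tau\le T}e^{-(a+\eps_0)\tau}\|W(\tau)\|_{H^N}<\infty\,\Big\},
\]
and let $B_T\subset X_T$ be its closed unit ball. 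Since every term on the right-hand side of~\eqref{eq:nonlin2} decays as $\tau\to-\infty$, the natural candidate for $U^{\rm per}$ is the mild solution given by Duhamel's formula integrated from $-\infty$:
\[
U^{\rm per}=\Phi(U^{\rm per}),\qquad \Phi(W)(\tau):=-\int_{-\infty}^{\tau}e^{(\tau-s)\L_{\rm ss}}\,\bP\big[(W\cdot\nabla)W+(U^{\rm lin}\cdot\nabla)W+(W\cdot\nabla)U^{\rm lin}+(U^{\rm lin}\cdot\nabla)U^{\rm lin}\big](s)\,ds.
\]
By the semigroup property, a fixed point satisfies $U^{\rm per}(\tau)=e^{(\tau-\tau_0)\L_{\rm ss}}U^{\rm per}(\tau_0)+\int_{\tau_0}^{\tau}e^{(\tau-s)\L_{\rm ss}}\bP[\cdots](s)\,ds$ for all $\tau_0\le\tau\le T$, i.e.\ it solves~\eqref{eq:nonlin2} in the mild sense on every finite interval; the parabolic smoothing of Lemma~\ref{prop:semigroup} then promotes it to a classical solution, and $\bP$ together with $e^{\tau\L_{\rm ss}}$ keeps it divergence-free.

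The estimates use two facts. First, since $N-1>3/2$, the space $H^{N-1}(\R^3)$ is an algebra continuously embedded in $L^\infty$, whence $\|\bP[(f\cdot\nabla)g]\|_{H^{N-1}}\les\|f\|_{H^N}\|g\|_{H^N}$. Second, Lemma~\ref{prop:semigroup} gives $\|e^{\rho\L_{\rm ss}}h\|_{H^N}\le M\,\rho^{-1/2}e^{\rho(a+\delta)}\|h\|_{H^{N-1}}$ for $\rho>0$. Using also $\|U^{\rm lin}(\tau)\|_{H^N}\les e^{a\tau}$ (cf.~\eqref{eq: Ulin H1}, the bound holding for all $\tau$ by the explicit formula $U^{\rm lin}=\Re(e^{\lambda\tau}\eta)$), for $W\in B_T$ and $s\le T\le 0$ we get $\|\bP[\cdots](s)\|_{H^{N-1}}\les(\|W(s)\|_{H^N}+\|U^{\rm lin}(s)\|_{H^N})^2\les e^{2as}$, using $\eps_0<a$. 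Writing $e^{2as}=e^{(a-\eps_0)s}e^{(a+\eps_0)s}\le e^{(a-\eps_0)T}e^{(a+\eps_0)s}$ for $s\le T$ and substituting $\rho=\tau-s$,
\[
\|\Phi(W)(\tau)\|_{H^N}\les e^{(a-\eps_0)T}e^{(a+\eps_0)\tau}\int_0^{\infty}\rho^{-1/2}e^{-(\eps_0-\delta)\rho}\,d\rho\les e^{(a-\eps_0)T}e^{(a+\eps_0)\tau},
\]
the $\rho$-integral being finite since $\eps_0-\delta>0$ and $1/2<1$. Hence $\|\Phi(W)\|_{X_T}\le C(\bar{U},U^{\rm lin},N)\,e^{(a-\eps_0)T}$, which is $\le 1$ once $T$ is sufficiently negative, so $\Phi(B_T)\subset B_T$.

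The contraction estimate is the same computation applied to the difference. Since the pure $U^{\rm lin}$ term cancels, $\Phi(W_1)-\Phi(W_2)$ is the Duhamel integral of $-\bP$ applied to $((W_1-W_2)\cdot\nabla)W_1+(W_2\cdot\nabla)(W_1-W_2)+(U^{\rm lin}\cdot\nabla)(W_1-W_2)+((W_1-W_2)\cdot\nabla)U^{\rm lin}$, whose $H^{N-1}$ norm at time $s\le T\le 0$ is $\les e^{as}e^{(a+\eps_0)s}\|W_1-W_2\|_{X_T}\le e^{aT}e^{(a+\eps_0)s}\|W_1-W_2\|_{X_T}$. The same change of variables yields $\|\Phi(W_1)-\Phi(W_2)\|_{X_T}\le C(\bar{U},U^{\rm lin},N)\,e^{aT}\|W_1-W_2\|_{X_T}$, a strict contraction once $T$ is negative enough (using $a>0$). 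Fixing such a $T=T(\bar{U},U^{\rm lin},N)$, Banach's fixed point theorem produces $U^{\rm per}\in B_T$, which by construction lies in $C((-\infty,T];H^N)$, solves~\eqref{eq:nonlin2}, and obeys $\|U^{\rm per}(\cdot,\tau)\|_{H^N}\le e^{(a+\eps_0)\tau}$ for $\tau\le T$, as claimed.

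The step I expect to be most delicate — and the reason the hypothesis $N>5/2$ appears, and the gain in the decay exponent is only $\eps_0$ rather than the full $a$ (the source term $\bP[(U^{\rm lin}\cdot\nabla)U^{\rm lin}]$ alone would suggest rate $e^{2a\tau}$) — is reconciling the derivative loss in the nonlinearity with working at the fixed Sobolev level $H^N$. The Duhamel kernel carries a factor $\rho^{-1/2}$ from parabolic smoothing, which is still integrable at $\rho=0$, but the exponential weights must be arranged so that simultaneously the $s\to-\infty$ tail of the Duhamel integral converges and a small prefactor ($e^{(a-\eps_0)T}$ for the self-mapping, $e^{aT}$ for the contraction) can be extracted; this is precisely what dictates the ordering $0<\delta<\eps_0<a$. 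Everything else — well-posedness of the Duhamel integral as a Bochner integral, strong continuity of $\tau\mapsto U^{\rm per}(\tau)$ in $H^N$, upgrading the mild solution to a classical one via Lemma~\ref{prop:semigroup}, and preservation of $\div U^{\rm per}=0$ — is routine.
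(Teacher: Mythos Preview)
Your proof is correct and follows essentially the same approach as the paper: the same weighted space $X$ with norm $\sup_{\tau\le T}e^{-(a+\eps_0)\tau}\|\cdot\|_{H^N}$, the same Duhamel map integrated from $-\infty$, the same parameter choices $\eps_0=a/2$ and $\delta=\eps_0/2$, and the same two ingredients (the algebra property of $H^{N-1}$ for $N>5/2$ and the parabolic smoothing estimate of Lemma~\ref{prop:semigroup} with loss $\rho^{-1/2}$). The only cosmetic difference is that the paper separates the right-hand side into a bilinear part $B$, a linear part $L$, and a source $G$ and packages the contraction argument via an abstract claim, whereas you estimate everything together; the resulting prefactors $e^{(a-\eps_0)T}$ for the self-map and $e^{aT}$ for the contraction are identical to the paper's.
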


Once Proposition~\ref{thm:nonlin} is known, the solution $U^{\rm per}$ can be bootstrapped to satisfy the refined decay estimates
\begin{equation}
     \| U^{\rm per} \|_{H^k} \les_k e^{2a\tau}, \quad \tau \in (-\infty,T], \; k \geq 0 \, ,
\end{equation}
thereby completing the proofs of Theorem~\ref{thm:nonuniqueness criterion} and Theorem~\ref{thm:refined}. This bootstrapping is discussed in Section~\ref{sec:refineddecay}. %in Remark~\ref{rmk:refineddecayestimates}.

%It is immediate to see that $V= U_{\rm lin} + U_{\rm per}$ satisfies all the properties stated in \autoref{thm:nonuniqueness criterion}, completing its proof. In Remark..., we also demonstrate the refined decay estimate

The remaining part of this section is devoted to the proof of Proposition~\ref{thm:nonlin} by means of a fixed point argument.

% \dacomment{* this comment is new}

\begin{remark}
In principle, one could infer the existence, uniqueness, and smoothness of the unstable manifold from abstract theory for semilinear parabolic PDEs, see~\cite{henrybook}. It is simple and instructive to give a construction below, which can be generalized to non-compactly supported backgrounds, see Remark~\ref{rmk:generalbackgrounds}.
\end{remark}

\subsubsection{Functional setting}
From now on, as in Proposition~\ref{thm:nonlin}, we suppose $a = s(\L_{\rm ss}) > 0$. Let $N > 5/2$ be an integer. It will be enough to choose $\eps_0 = a/2$. Let $T \in \R$,  to be determined. We suppress dependence of the constants below on $\bar{U}$ and $U^{\rm lin}$, which are now fixed.

We consider the norm
\begin{equation}
	\|U\|_{X}:= \sup_{\tau < T} e^{-(a + \eps_0) \tau} \|U(\cdot,\tau)\|_{H^N} \, ,
\end{equation}
and the associated Banach space
\begin{equation}
	X:= \{  U \in C((-\infty, T]; H^N(\R^3;\R^3)) \, : \, \|U\|_{X} < \infty  \} \, .
\end{equation}
We study the functional 
\begin{align*}
	\mathcal{T}(U)(\cdot, \tau) 
	& = -\int_{-\infty}^{\tau} e^{(\tau - s) \L_{\rm ss}} \circ  \mathbb{P} \left[ ( U \cdot \nabla) U 
	+ ( U^{\rm lin} \cdot \nabla) U  +  (U \cdot \nabla)  U^{\rm lin} \right] ds
	\\ &\qquad\qquad  - \int_{-\infty}^{\tau} e^{(\tau - s) \L_{\rm ss}} \circ \mathbb{P} \left[ (U^{\rm lin}\cdot \nabla) U^{\rm lin} \right] ds   	
	\, .
\end{align*}
By Duhamel's formula and parabolic regularity theory, any $U\in X$ such that $\mathcal{T}(U) = U$ is a solution to \eqref{eq:nonlin2} satisfying the statement of Proposition~\ref{thm:nonlin}.

To find the sought fixed point we apply the contraction mapping principle. 
\begin{proposition}\label{prop: fixed point}
	 %Assume $a=s(\L_{\rm ss})>0$ and $N> \frac{5}{2} $. Then,
	 There exist $T=T(\bar U,U^{\rm lin},N) \in \R$ and $\eps_0=\eps_0(a)$ such that
	\begin{equation}
		\mathcal{T} : \{ U\in X: \|U\|_X\le 1\} \to \{ U\in X: \|U\|_X\le 1\} \, ,
	\end{equation}
    is a contraction.
\end{proposition}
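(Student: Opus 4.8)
The plan is a routine application of the contraction mapping principle in the weighted space $X$; essentially the only thing to monitor is the bookkeeping of the exponential-in-$\tau$ weights, so that the loss $e^{(a+\delta)(\tau-s)}$ coming from the parabolic smoothing estimate~\eqref{eq:reg} is defeated both by the decay $e^{(a+\eps_0)s}$ carried by elements of $X$ and by the smallness of $e^{cT}$ (with $c>0$) gained by sending $T\to-\infty$. Fix once and for all $\eps_0 := a/2$ and $\delta := a/4$, so that the constant $M = M(N-1,N,\delta)$ in~\eqref{eq:reg} and the constants $C(k,\eta)$ in~\eqref{eq: Ulin H1} are absolute. I will use throughout that $N-1>3/2$, so $H^{N-1}(\R^3)$ is a Banach algebra, that the Leray projector $\bP$ is bounded on $H^{N-1}$, and consequently that $\|\bP[(W_1\cdot\nabla)W_2]\|_{H^{N-1}} \lesssim \|W_1\|_{H^N}\|W_2\|_{H^N}$.

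\textbf{Step 1: $\mathcal{T}$ maps the unit ball of $X$ into itself.} For $\|U\|_X\le1$ I bound the three groups of terms of $\mathcal{T}(U)$ separately. For the quadratic term, the algebra estimate, boundedness of $\bP$, and~\eqref{eq:reg} with $\sigma_1=N-1$, $\sigma_2=N$ give
\begin{equation*}
\Big\| \int_{-\infty}^{\tau} e^{(\tau-s)\L_{\rm ss}}\bP[(U\cdot\nabla)U]\,ds \Big\|_{H^N}
\lesssim \int_{-\infty}^{\tau} (\tau-s)^{-1/2}\, e^{(a+\delta)(\tau-s)}\, \|U\|_X^2\, e^{2(a+\eps_0)s}\,ds ,
\end{equation*}
and the substitution $r=\tau-s$ turns the right-hand side into $\|U\|_X^2\, e^{2(a+\eps_0)\tau}\int_0^\infty r^{-1/2} e^{-(a+2\eps_0-\delta)r}\,dr$, a finite multiple of $\|U\|_X^2 e^{2(a+\eps_0)\tau}$ since $a+2\eps_0-\delta>0$; as $a+\eps_0>0$ this contributes $\le C e^{(a+\eps_0)T}\|U\|_X^2$ to $\|\mathcal{T}(U)\|_X$. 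For the two terms linear in $U$, the same computation with the integrand weight $e^{2(a+\eps_0)s}$ replaced by $\|U^{\rm lin}\|_{H^N}\|U\|_{H^N}\lesssim \|U\|_X e^{(2a+\eps_0)s}$ (using~\eqref{eq: Ulin H1}) and the exponent in the $r$-integral replaced by $a+\eps_0-\delta>0$ gives a contribution $\le C e^{aT}\|U\|_X$. Finally, for the pure forcing term $\int_{-\infty}^{\tau} e^{(\tau-s)\L_{\rm ss}}\bP[(U^{\rm lin}\cdot\nabla)U^{\rm lin}]\,ds$, using $\|U^{\rm lin}(\cdot,s)\|_{H^N}^2\lesssim e^{2as}$ and the exponent $a-\delta>0$ we obtain a bound $\lesssim e^{2a\tau}=e^{(a-\eps_0)\tau}e^{(a+\eps_0)\tau}$, i.e.\ a contribution $\le C e^{(a-\eps_0)T}$ to $\|\mathcal{T}(U)\|_X$. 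Summing and using $\|U\|_X\le1$,
\begin{equation*}
\|\mathcal{T}(U)\|_X \le C\big(e^{(a+\eps_0)T}+e^{aT}+e^{(a-\eps_0)T}\big),
\end{equation*}
which is $\le1$ once $T$ is chosen sufficiently negative (recall $a-\eps_0=a/2>0$, so every exponential tends to $0$ as $T\to-\infty$). Continuity of $\tau\mapsto\mathcal{T}(U)(\cdot,\tau)$ into $H^N$ follows from the strong continuity of $e^{\tau\L_{\rm ss}}$ together with dominated convergence and the integrability of $(\tau-s)^{-1/2}$, so $\mathcal{T}(U)\in X$.

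\textbf{Step 2: contraction.} For $\|U\|_X,\|V\|_X\le1$ the pure forcing term cancels in $\mathcal{T}(U)-\mathcal{T}(V)$, and writing $(U\cdot\nabla)U-(V\cdot\nabla)V=((U-V)\cdot\nabla)U+(V\cdot\nabla)(U-V)$ reduces every surviving term to the form already estimated in Step~1, with one factor replaced by $U-V$ and the other by $U$, $V$, or $U^{\rm lin}$. The same integrals then yield $\|\mathcal{T}(U)-\mathcal{T}(V)\|_X \le C\big(e^{(a+\eps_0)T}+e^{aT}\big)\|U-V\|_X$, which is $\le\tfrac12\|U-V\|_X$ for $T$ sufficiently negative. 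Fixing $T=T(\bar U,U^{\rm lin},N)$ to satisfy both smallness requirements completes the proof; the contraction mapping principle then produces the fixed point $U^{\rm per}$, which by Duhamel's formula and parabolic regularity solves~\eqref{eq:nonlin2} with the bound of Proposition~\ref{thm:nonlin}.

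The main (and essentially the only) obstacle is the exponent accounting above: in each of the three time integrals one must verify that the rate $e^{(a+\delta)(\tau-s)}$ lost to parabolic smoothing is strictly dominated by the weight carried by the integrand, which is what forces $\eps_0\in(0,a)$ and $\delta$ small. The structurally important output of this accounting is that the inhomogeneity $\bP[(U^{\rm lin}\cdot\nabla)U^{\rm lin}]$ produces growth of order $e^{2a\tau}$, strictly faster (as $\tau\to-\infty$) than the order $e^{(a+\eps_0)\tau}$ permitted in $X$: this is precisely why $U^{\rm per}$ ends up of genuinely smaller order than $U^{\rm lin}$, so that $\bar u$ and $\bar u+u^{\rm lin}+u^{\rm per}$ are two distinct solutions.
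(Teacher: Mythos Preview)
Your proof is correct and follows essentially the same approach as the paper: decompose $\mathcal{T}$ into bilinear, linear, and forcing pieces, estimate each via the parabolic smoothing Lemma~\ref{prop:semigroup} together with the algebra property of $H^{N-1}$, and track the exponential weights to see that everything is small when $T\to-\infty$. The paper packages this as an abstract contraction lemma for maps of the form $B(U,U)+LU+G$ before verifying the hypotheses, whereas you carry out the estimates directly; and the paper chooses to smooth from $H^{N-1}$ into $H^{N+1/2}$ (gaining an extra half derivative, used later for the bootstrap in Section~\ref{sec:refineddecay}) rather than into $H^N$ as you do, but neither difference affects the argument for this proposition.
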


\begin{proof}

We first prove a claim which, under suitable assumptions, builds a fixed point of an operator with our structure.

%\begin{lemma}\label{lemma: elementary}
\subsubsection{Abstract claim} {\it Let $X$ be a Banach space and
	\begin{equation}
		\mathcal{T}(U) := B(U,U) + LU + G \, ,
		\quad U\in X 	\, ,
	\end{equation}
	where $G\in X$, $L : X\to X$ is a bounded linear operator and $B: X \times X \to X$ is a bounded bilinear form. If
	\begin{equation}
		\| L\| + 2\|B\| + \|G\|_X < 1 \, ,
	\end{equation}
	then 
	\begin{equation}
		\mathcal{T} : \{ U\in X: \|U\|_X\le 1\} \to \{ U\in X: \|U\|_X\le 1\} \, ,
	\end{equation}
	is a contraction.	
	}
%\end{lemma}

Indeed, if $\| U \|_X\le 1$, then
\begin{equation}
    \| \mathcal{T} U\|_X 
    \le \| B(U,U)\|_X + \| L U \|_X + \| G \|_X
    \le \| B \| \|U\|_X^2 + \| L \| \| U\|_X + \| G\|_X < 1 \, .
\end{equation}
Let us now consider $U, V \in X$ such that $\| U\|_X \le 1$, $\|V\|_X\le 1$. It holds
\begin{align}
    \| \mathcal{T}(U) - \mathcal{T}(V) \|_X 
    & \le \| B(U,U) - B(V,V)\|_X + \| L(U-V)\|_X
    \\& \le \| B(U-V,U)\|_X + \| B(V, U-V)\|_X + \|L\| \|U-V\|_X
    \\&\le  (2\| B \| + \| L \|) \|U-V\|_X \, ,
\end{align}
since $2\| B \| + \| L \| < 1$, we conclude that $\mathcal{T}$ is a contraction.

\subsubsection{Application of the claim to Proposition~\ref{prop: fixed point}}
We now turn to verify the assumptions of the previous elementary claim for our operators:
\begin{equation}
	B(U,V) (\cdot, \tau) := -\int_{-\infty}^{\tau} e^{(\tau - s) \L_{\rm ss}} \circ  \mathbb{P} \left[ ( U \cdot \nabla) V \right]  ds \, ,
\end{equation}
\begin{equation}
	LU(\cdot, t) := -\int_{-\infty}^{\tau} e^{(\tau - s) \L_{\rm ss}} \circ  \mathbb{P} \left[ 
	( U^{\rm lin} \cdot \nabla) U  + (U \cdot \nabla)  U^{\rm lin} \right] ds \, ,
\end{equation}
\begin{equation}
	G(\cdot, \tau) := -\int_{-\infty}^{\tau} e^{(\tau - s) \L_{\rm ss}} \circ \mathbb{P} \left[  (U^{\rm lin}\cdot \nabla) U^{\rm lin} \right] ds  \, .
\end{equation}

More precisely, we prove below that %in the next three sections that
\begin{align}\label{eqn:verify}
	\| B(U,U)\|_X + \|LU\|_X + \|G\|_X
	\le C(N,\delta, a) e^{T(a-\eps_0)} \, ,
\end{align}
provided $\delta < a$. Choosing $\delta = \eps_0/2 = a/4$ and $T$ %$T=T(N,a)$
negative enough we conclude that the right-hand side in \eqref{eqn:verify} is less than $ 1/2$. Applying the previous abstract claim, the proof of  Proposition~\ref{prop: fixed point} is concluded provided we show \eqref{eqn:verify}.

To this end, we will use the following observation: For $N > 5/2$, the space $H^{N-1}(\R^3)$ is an algebra, and hence for every $f,g \in H^N(\R^3)$,
\begin{equation}\label{lemma: HN}
\| f \nabla g \|_{H^{N-1}}
\le C(N) \| f \|_{H^{N-1}} \| \nabla g \|_{H^{N-1}}
\le C(N) \| f \|_{H^N} \| g \|_{H^N} \,.
\end{equation}

% \dacomment{* we now tweak things to gain $1/2$ derivative, since it is useful in the bootstrapping.}

\subsubsection{Estimate on $B(U,U)$}
%Let us fix $0 < \delta < \eps_0$ and $N > \frac{5}{2}$.
We show that
\begin{equation}\label{eq: B estimate}
	\|B(U,U)\|_X \le C(N,\delta,d) e^{(a+\eps_0)T} \|U\|_X^2 \, .
\end{equation}
We apply Lemma~\ref{prop:semigroup} to get
\begin{equation}
	\norm{B(U,U)(\cdot, \tau)}_{H^{N+1/2}} \le M(N,\delta) \int_{-\infty}^\tau \frac{e^{(\tau-s)(a+\delta)}}{(\tau-s)^{3/4}} \norm{(U \cdot \nabla) U(\cdot, s)}_{H^{N-1}}  ds \, ,
\end{equation}
for any $\tau \in (-\infty, T)$.
Thanks to \eqref{lemma: HN}, we deduce
\begin{equation}
	\norm{(U \cdot \nabla) U(\cdot, s)}_{H^{N-1}}
	\le C(N) \norm{U(\cdot, s)}_{H^N}^2
	\le C(N) e^{2(a+\eps_0)s} \norm{U}_X^2 \, ,
\end{equation}
and, using that $a+2\eps_0-\delta>0$, we obtain
\begin{align}
	\norm{B(U,U)(\cdot, \tau)}_{H^{N+1/2}} 
	& \le C(N,\delta) \norm{U}_X^2 \int_{-\infty}^{\tau} \frac{e^{(\tau-s)(a+\delta)}e^{2(a+\eps_0)s}}{(\tau-s)^{3/4}}  d s
	\\& \le C(N,\delta) e^{2(a+\eps_0)\tau} \norm{U}_X^2 \, ,
\end{align}
which implies \eqref{eq: B estimate}. The additional gain of $1/2$ regularity is not necessary for the existence; we use it to bootstrap the solution to smoothness in Section~\ref{sec:refineddecay}.

\subsubsection{Estimate on $G$}\label{sec:estimate G}
%Let us fix $0 < \delta < \eps_0$ and $N > \frac{5}{2}$.
We show that
\begin{equation}\label{eq: G estimate}
	\| G\|_X \le C(N,\delta,a)e^{ T(a-\eps_0)} \, ,
\end{equation}
provided $\delta < a$.

As a consequence of Lemma~\ref{prop:semigroup} and \eqref{lemma: HN}, we have
\begin{equation}
	\| G(\cdot, \tau)\|_{H^N} \le M(N,\delta) \int_{-\infty}^\tau e^{(\tau-s)(a+\delta)}
	\| U^{\rm lin} \cdot \nabla  U^{\rm lin}\|_{H^{N}}
	  ds \, ,
\end{equation}
\begin{equation}
	\| U^{\rm lin} \cdot \nabla  U^{\rm lin}\|_{H^{N}}
	\le 
	C(N) \| U^{\rm lin} \|_{H^{N+1}}^2
	\le C(N) e^{2as} \, .
\end{equation}
Hence, 
\begin{align}
\label{eq:goodgestimate}
	\| G(\cdot, \tau)\|_{H^N} \le C(N,\delta)  \int_{-\infty}^\tau e^{(\tau-s)(a+\delta)}e^{2sa}
	d s  \le
	C(N,\delta,a)e^{2\tau a} \, ,
\end{align}
provided $\delta < a$.
This implies \eqref{eq: G estimate}.

\subsubsection{Estimate on $LU$}
%Let us fix $0 < \delta < \eps_0$ and $N > \frac{5}{2}$.
We show that
\begin{equation}\label{eq: L estimate}
	\|L U\|_X
	 \le C(N,\delta,a) e^{a T} \|U\|_X\, .
\end{equation}
As a consequence of Lemma~\ref{prop:semigroup} we get
\begin{align}
	\| LU & (\cdot, \tau)\|_{H^{N+1/2}} 
	\\& 
	\le M(N,\delta)  \int_{-\infty}^\tau \frac{e^{(\tau-s)(a+\delta)}}{(\tau-s)^{3/4}}
	(    
	 \|(U^{\rm lin} \cdot \nabla) U\|_{H^{N-1}}
	 +\|(U \cdot \nabla) U^{\rm lin}\|_{H^{N-1}}) d s \, ,
\end{align}
for $\tau \in (-\infty, T)$.

By employing \eqref{eq: Ulin H1} and \eqref{lemma: HN} we deduce
\begin{equation}
	\| (U^{\rm lin} \cdot \nabla) U\|_{H^{N-1}}
	+\|(U \cdot \nabla) U^{\rm lin}\|_{H^{N-1}}
	\le C(N,\delta) e^{(2a+\eps_0)s} \|U\|_X \, .
\end{equation}
This implies
\begin{equation}
		\|LU(\cdot, \tau)\|_{H^{N+1/2}} \le  C(N,\delta,a) e^{(2a+\eps_0)s} \|U\|_X\, ,
\end{equation}
and \eqref{eq: L estimate} easily follows.

Collecting the estimates \eqref{eq: B estimate}, \eqref{eq: G estimate}, \eqref{eq: L estimate}, we obtain \eqref{eqn:verify}. \end{proof}

% \dacomment{check for $t$ vs $\tau$ and also for sign of $T$}

\subsubsection{Refined decay estimates}
\label{sec:refineddecay}
Once the solution $U^{\rm per}$ of Proposition~\ref{thm:nonlin} is known to exist, we can bootstrap it to satisfy $O(e^{2\tau a})$ decay in all $H^k$, $k \geq 0$, as claimed in Theorem~\ref{thm:refined}. First, the estimate~\eqref{eq:goodgestimate} on $G$ is already satisfied for all $N$. Second, the estimates on $B(U^{\rm per},U^{\rm per})$ and $LU^{\rm per}$ above gain $1/2$ derivative above the regularity of the solution. By Duhamel's formula, we see that if $N > 5/2$ (not necessarily integral) and
\begin{equation}
\sup_{\tau \in (-\infty,T]} e^{-(a+\varepsilon_0)\tau} \| U^{\rm per}(\cdot,\tau) \|_{H^N} < +\infty \, ,
\end{equation}
 then
 \begin{equation}
    \label{eq:inductionconclusion}
     \sup_{\tau \in (-\infty,T]} e^{-2 a \tau} \| U^{\rm per}(\cdot,\tau) \|_{H^{N+1/2}} < +\infty \, .
 \end{equation}
 By induction, we conclude that~\eqref{eq:inductionconclusion} holds for all $N > 5/2$.

% \dacomment{* this comment is new }

%demonstrated to above to gain $1/2$Indeed, we revisit the estimates on , and $G$ above. , so we focus on the other two. To treat both, it suffices to demonstrate that, for all $\tau \in (-\infty,T]$, we have
%\begin{equation}
    %\| B(U,V)(\cdot,\tau) \|_{H^{\sigma+1/2}} \les_\sigma e^{2\tau a} \sup_{s \in (-\infty,\tau]} e^{-a s} \| U(\cdot,s) \|_{H^\sigma} \times \sup_{s \in (-\infty,\tau]} e^{-a s} \| V(\cdot,s) \|_{H^\sigma} \, ,
%\end{equation}
%which follows as above except with the factor $(\tau-s)^{3/4}$ in the denominators instead of $(\tau - s)^{1/2}$. With each iteration, $U^{\rm per}$ is bootstrapped a half derivative  

% \dacomment{* This remark is now at the end}
\begin{remark}[General backgrounds]
\label{rmk:generalbackgrounds}
It is possible to treat more general unstable backgrounds
 satisfying the natural decay conditions
\begin{equation}
    \label{eq:conditiononthebackground}
    |\nabla^k \bar{U}(\xi)| \les_k \la \xi \ra^{-k-1}
\end{equation}
and $\bar{u}(x,t) \to \bar{u}_0$ in $L^2_{\rm loc}(\R^3)$ as $t \to 0^+$.
Let $0 \leq \chi \in C^\infty(B_2)$ with $\chi \equiv 1$ on $B_1$. For $\alpha \in (0,1/2)$, we define
\begin{equation}
    \tilde{u}(x,t) = \bar{u}(x) \chi\left( \frac{x}{t^{\alpha}} \right).
\end{equation}
That is, the cut-off shrinks as $t \to 0^+$ but `more slowly' than the self-similar rate (in similarity variables, the tails of the cut-off are escaping to $|\xi| = +\infty$ as $\tau \to -\infty$). The ansatz is instead $u = \tilde{u} + u^{\rm lin} + u^{\rm per}$. The cut-off introduces additional forcing and lower-order terms, which are treated perturbatively. (With this forcing, the refined decay $O(e^{2\tau a})$ is not expected, but it is not necessary for non-uniqueness.) % We leave the details to the interested reader.
%One demonstrates that $U^{\rm per}$ is  $O(e^{\tau(a+\varepsilon)})$ for some $\varepsilon \in (0,a)$. 
\end{remark}

\subsubsection*{Acknowledgments} DA was supported by NSF Postdoctoral Fellowship  Grant No.\ 2002023 and Simons Foundation Grant No.\ 816048.
EB was supported by Giorgio and Elena Petronio Fellowship.
MC was supported by the SNSF Grant 182565.
The authors thank Camillo De Lellis, Vikram Giri, Maximilian Janisch, and Hyunju Kwon for running together a reading seminar on the results by Vishik; 
%, which lie at the basis of the present manuscript;
in particular, they are grateful to Camillo De Lellis for sharing his original point of view on them and for many useful discussions. DA also thanks Vlad Vicol, Matt Novack, and Vladim{\'i}r {\v S}ver{\'a}k for discussions and encouragement and Tarek Elgindi for comments on the relationship between non-uniqueness and blow-up above Section~\ref{sec:comparisonwithexisting}.
%vlad, Vladimir, Matt

\bibliographystyle{abbrv}
\bibliography{nonuniquenessbib}

\begin{thebibliography}{10}

\bibitem{OurLectureNotes}
D.~Albritton, E.~Bru{\'e}, M.~Colombo, C.~De~Lellis, V.~Giri, M.~Janisch, and
  H.~Kwon.
\newblock Instability and nonuniqueness for the $2d$ {E}uler equations in
  vorticity form, after {M}. {V}ishik, 2021.

\bibitem{DallasMariaFractional}
D.~Albritton and M.~Colombo.
\newblock In preparation.

\bibitem{BedrossianVortexFilaments2018}
J.~Bedrossian, P.~Germain, and B.~Harrop-Griffiths.
\newblock Vortex filament solutions of the {N}avier-{S}tokes equations.
\newblock {\em arXiv preprint arXiv:1809.04109}, 2018.

\bibitem{bogovskii}
M.~E. Bogovski\u{i}.
\newblock Solutions of some problems of vector analysis, associated with the
  operators {${\rm div}$} and {${\rm grad}$}.
\newblock In {\em Theory of cubature formulas and the application of functional
  analysis to problems of mathematical physics}, volume 1980 of {\em Trudy Sem.
  S. L. Soboleva, No. 1}, pages 5--40, 149. Akad. Nauk SSSR Sibirsk. Otdel.,
  Inst. Mat., Novosibirsk, 1980.

\bibitem{bradshawtsaiII}
Z.~Bradshaw and T.-P. Tsai.
\newblock Forward discretely self-similar solutions of the {N}avier-{S}tokes
  equations {II}.
\newblock {\em Ann. Henri Poincar\'e}, 18(3):1095--1119, 2017.

\bibitem{BressanSelfSimilar}
A.~Bressan and R.~Murray.
\newblock On self-similar solutions to the incompressible {E}uler equations.
\newblock {\em J. Differential Equations}, 269(6):5142--5203, 2020.

\bibitem{BressanAposteriori}
A.~Bressan and W.~Shen.
\newblock A posteriori error estimates for self-similar solutions to the
  {E}uler equations.
\newblock {\em Discrete Contin. Dyn. Syst.}, 41(1):113--130, 2021.

\bibitem{BuckmasterColomboVicol}
T.~Buckmaster, M.~Colombo, and V.~Vicol.
\newblock Wild solutions of the navier–stokes equations whose singular sets
  in time have hausdorff dimension strictly less than 1, 2021.

\bibitem{OnsagerAdmissible}
T.~Buckmaster, C.~De~Lellis, L.~Sz\'{e}kelyhidi, Jr., and V.~Vicol.
\newblock Onsager's conjecture for admissible weak solutions.
\newblock {\em Comm. Pure Appl. Math.}, 72(2):229--274, 2019.

\bibitem{buckmaster2021non}
T.~Buckmaster, N.~Masmoudi, M.~Novack, and V.~Vicol.
\newblock Non-conservative $h^{1/2}$ weak solutions of the incompressible 3d
  euler equations.
\newblock {\em arXiv preprint arXiv:2101.09278}, 2021.

\bibitem{convexintegrationconstructionsinturbulence}
T.~Buckmaster and V.~Vicol.
\newblock Convex integration and phenomenologies in turbulence.
\newblock {\em EMS Surv. Math. Sci.}, 6(1-2):173--263, 2019.

\bibitem{BuckmasterVicolAnnals}
T.~Buckmaster and V.~Vicol.
\newblock Nonuniqueness of weak solutions to the {N}avier-{S}tokes equation.
\newblock {\em Ann. of Math. (2)}, 189(1):101--144, 2019.

\bibitem{ckn}
L.~Caffarelli, R.~Kohn, and L.~Nirenberg.
\newblock Partial regularity of suitable weak solutions of the
  {N}avier-{S}tokes equations.
\newblock {\em Comm. Pure Appl. Math.}, 35(6):771--831, 1982.

\bibitem{cheskidov2020sharp}
A.~Cheskidov and X.~Luo.
\newblock Sharp nonuniqueness for the {N}avier-{S}tokes equations.
\newblock {\em arXiv preprint arXiv:2009.06596}, 2020.

\bibitem{cheskidovl2critical}
A.~Cheskidov and X.~Luo.
\newblock $l^2$-critical nonuniqueness for the 2d {N}avier-{S}tokes equations.
\newblock {\em arXiv preprint arXiv:2105.12117}, 2021.

\bibitem{mariahypodissipativeonefifth}
M.~Colombo, C.~De~Lellis, and L.~De~Rosa.
\newblock Ill-posedness of {L}eray solutions for the hypodissipative
  {N}avier-{S}tokes equations.
\newblock {\em Comm. Math. Phys.}, 362(2):659--688, 2018.

\bibitem{DeRosa19}
L.~De~Rosa.
\newblock Infinitely many {L}eray-{H}opf solutions for the fractional
  {N}avier-{S}tokes equations.
\newblock {\em Comm. Partial Differential Equations}, 44(4):335--365, 2019.

\bibitem{DrazinReid}
P.~G. Drazin and W.~H. Reid.
\newblock {\em Hydrodynamic stability}.
\newblock Cambridge Mathematical Library. Cambridge University Press,
  Cambridge, second edition, 2004.
\newblock With a foreword by John Miles.

\bibitem{EllingAlgebraicSpiral}
V.~Elling.
\newblock Algebraic spiral solutions of the 2d incompressible {E}uler
  equations.
\newblock {\em Bull. Braz. Math. Soc. (N.S.)}, 47(1):323--334, 2016.

\bibitem{EllingSelfSimilar}
V.~Elling.
\newblock Self-similar 2d {E}uler solutions with mixed-sign vorticity.
\newblock {\em Comm. Math. Phys.}, 348(1):27--68, 2016.

\bibitem{EngelNagel}
K.-J. Engel and R.~Nagel.
\newblock {\em One-parameter semigroups for linear evolution equations}, volume
  194 of {\em Graduate Texts in Mathematics}.
\newblock Springer-Verlag, New York, 2000.
\newblock With contributions by S. Brendle, M. Campiti, T. Hahn, G. Metafune,
  G. Nickel, D. Pallara, C. Perazzoli, A. Rhandi, S. Romanelli and R.
  Schnaubelt.

\bibitem{galdi}
G.~P. Galdi.
\newblock {\em An introduction to the mathematical theory of the
  {N}avier-{S}tokes equations}.
\newblock Springer Monographs in Mathematics. Springer, New York, second
  edition, 2011.
\newblock Steady-state problems.

\bibitem{gallaghergallay}
I.~Gallagher and T.~Gallay.
\newblock Uniqueness for the two-dimensional {N}avier-{S}tokes equation with a
  measure as initial vorticity.
\newblock {\em Math. Ann.}, 332(2):287--327, 2005.

\bibitem{gallaghergallaylions}
I.~Gallagher, T.~Gallay, and P.-L. Lions.
\newblock On the uniqueness of the solution of the two-dimensional
  {N}avier-{S}tokes equation with a {D}irac mass as initial vorticity.
\newblock {\em Math. Nachr.}, 278(14):1665--1672, 2005.

\bibitem{gallaysverakremarks}
T.~Gallay and V.~\v{S}ver\'{a}k.
\newblock Remarks on the {C}auchy problem for the axisymmetric
  {N}avier-{S}tokes equations.
\newblock {\em Confluentes Math.}, 7(2):67--92, 2015.

\bibitem{GallaySverakUniqueness2019}
T.~Gallay and V.~\v{S}ver\'{a}k.
\newblock Uniqueness of axisymmetric viscous flows originating from circular
  vortex filaments.
\newblock {\em Ann. Sci. \'{E}c. Norm. Sup\'{e}r. (4)}, 52(4):1025--1071, 2019.

\bibitem{GallayWayne}
T.~Gallay and C.~E. Wayne.
\newblock Global stability of vortex solutions of the two-dimensional
  {N}avier-{S}tokes equation.
\newblock {\em Comm. Math. Phys.}, 255(1):97--129, 2005.

\bibitem{guillodsverak}
J.~{Guillod} and V.~{{\v S}ver{\'a}k}.
\newblock {Numerical investigations of non-uniqueness for the Navier-Stokes
  initial value problem in borderline spaces}.
\newblock {\em ArXiv e-prints}, Apr. 2017.

\bibitem{henrybook}
D.~Henry.
\newblock {\em Geometric theory of semilinear parabolic equations}, volume 840
  of {\em Lecture Notes in Mathematics}.
\newblock Springer-Verlag, Berlin-New York, 1981.

\bibitem{hopf}
E.~Hopf.
\newblock \"uber die {A}nfangswertaufgabe f\"ur die hydrodynamischen
  {G}rundgleichungen.
\newblock {\em Math. Nachr.}, 4:213--231, 1951.

\bibitem{IsettOnsager}
P.~Isett.
\newblock A proof of {O}nsager's conjecture.
\newblock {\em Ann. of Math. (2)}, 188(3):871--963, 2018.

\bibitem{jiasverakselfsim}
H.~Jia and V.~{\v S}ver{\'a}k.
\newblock Local-in-space estimates near initial time for weak solutions of the
  {N}avier-{S}tokes equations and forward self-similar solutions.
\newblock {\em Invent. Math.}, 196(1):233--265, 2014.

\bibitem{jiasverakillposed}
H.~Jia and V.~{\v S}ver{\'a}k.
\newblock Are the incompressible 3d {N}avier-{S}tokes equations locally
  ill-posed in the natural energy space?
\newblock {\em J. Funct. Anal.}, 268(12):3734--3766, 2015.

\bibitem{Katobook}
T.~Kato.
\newblock {\em Perturbation theory for linear operators}.
\newblock Die Grundlehren der mathematischen Wissenschaften, Band 132.
  Springer-Verlag New York, Inc., New York, 1966.

\bibitem{kochtataru}
H.~Koch and D.~Tataru.
\newblock Well-posedness for the {N}avier-{S}tokes equations.
\newblock {\em Adv. Math.}, 157(1):22--35, 2001.

\bibitem{ladyzhenskayaaxisymmetric}
O.~A. Lady\v{z}enskaja.
\newblock Unique global solvability of the three-dimensional {C}auchy problem
  for the {N}avier-{S}tokes equations in the presence of axial symmetry.
\newblock {\em Zap. Nau\v{c}n. Sem. Leningrad. Otdel. Mat. Inst. Steklov.
  (LOMI)}, 7:155--177, 1968.

\bibitem{ladyzhenskayanonuniqueness}
O.~A. Lady{\v z}enskaja.
\newblock An example of nonuniqueness in {H}opf's class of weak solutions of
  the {N}avier-{S}tokes equations.
\newblock {\em Izv. Akad. Nauk SSSR Ser. Mat.}, 33:240--247, 1969.

\bibitem{lemarie2002}
P.~G. Lemari\'e-Rieusset.
\newblock {\em Recent developments in the {N}avier-{S}tokes problem}, volume
  431 of {\em Chapman \& Hall/CRC Research Notes in Mathematics}.
\newblock Chapman \& Hall/CRC, Boca Raton, FL, 2002.

\bibitem{leray}
J.~Leray.
\newblock Sur le mouvement d'un liquide visqueux emplissant l'espace.
\newblock {\em Acta Math.}, 63(1):193--248, 1934.

\bibitem{LinSIMA2003}
Z.~Lin.
\newblock Instability of some ideal plane flows.
\newblock {\em SIAM J. Math. Anal.}, 35(2):318--356, 2003.

\bibitem{ReedSimonI}
M.~Reed and B.~Simon.
\newblock {\em Methods of modern mathematical physics. {I}. {F}unctional
  analysis}.
\newblock Academic Press, New York-London, 1972.

\bibitem{Tollmien}
W.~Tollmien.
\newblock Ein allgemeines {K}riterium der {I}nstabilit{\"a}t laminarer
  {G}eschwindigkeitsverteilungen.
\newblock {\em Nachr. Ges. Wiss. G{\"o}ttingen, Math. Phys. Klasse NF},
  (1):79–114, 1935.

\bibitem{Vishik1}
M.~Vishik.
\newblock Instability and non-uniqueness in the {C}auchy problem for the
  {E}uler equations of an ideal incompressible fluid. part i, 2018.

\bibitem{Vishik2}
M.~Vishik.
\newblock Instability and non-uniqueness in the {C}auchy problem for the
  {E}uler equations of an ideal incompressible fluid. part ii, 2018.

\end{thebibliography}

\end{document}